\def\R{\textrm{I\kern-0.21emR}}
\def\N{\textrm{I\kern-0.21emN}}
\def\Z{\mathbb{Z}}
\newcommand{\C}{\mathbb{C}}
\renewcommand{\geq}{\geqslant}
\renewcommand{\leq}{\leqslant}
\newtheorem{theorem}{Theorem}  
\newtheorem{proposition}{Proposition}
\newtheorem{lemma}{Lemma}
\theoremstyle{definition}\newtheorem{remark}{Remark}
\newcommand{\Op}{\mathrm{Op}}
\newcommand{\QL}{\mathcal{Q}}
\newcommand{\I} {\mathcal{I}}
\newcommand{\supp}{\mathrm{supp}}
\title{Geometric and spectral characterization of Zoll manifolds, invariant measures and quantum limits}
\author{Emmanuel Humbert\footnote{Laboratoire de Math\'ematiques et de Physique Th\'eorique, UFR Sciences et Technologie, Facult\'e Fran\c cois Rabelais, Parc de Grandmont, 37200 Tours, France (\texttt{emmanuel.humbert@lmpt.univ-tours.fr}).}
\and Yannick Privat\footnote{IRMA, Universit\'e de Strasbourg, CNRS UMR 7501, 7 rue Ren\'e Descartes, 67084 Strasbourg, France ({\tt yannick.privat@unistra.fr}).}
	\and Emmanuel Tr\'elat\footnote{Sorbonne Universit\'e, Universit\'e Paris-Diderot SPC, CNRS, Inria, Laboratoire Jacques-Louis Lions, \'equipe CAGE, F-75005 Paris (\texttt{emmanuel.trelat@sorbonne-universite.fr}).}}
\date{}
\begin{document}

\maketitle

\begin{abstract}
We provide new geometric and spectral characterizations for a Riemannian manifold to be a Zoll manifold, i.e., all geodesics of which are periodic. We analyze relationships with invariant measures and quantum limits.
\end{abstract}

\section{Introduction and main results}

Let $(M,g)$ be a closed connected smooth Riemannian manifold of finite dimension $n\in\N^*$, endowed with its canonical Riemannian measure $dx_g$.  

We denote by $T^*M$ the cotangent bundle of $M$ and by $S^*M$ the unit cotangent bundle, endowed with the Liouville measure $\mu_L$, and we denote by $\omega=-d\mu_L$ the canonical symplectic form on $T^*M$.
We consider the Riemannian geodesic flow $(\varphi_t)_{t\in\R}$, where, for every $t\in\R$, $\varphi_t$ is a symplectomorphism on $(T^*M,\omega)$ which preserves $S^*M$. 
A geodesic is a curve $t\mapsto\varphi_t(z)$ on $S^*M$ for some $z\in S^*M$.
Throughout the paper, we denote by $\pi:T^*M\rightarrow M$ the canonical projection. The same notation is used to designate the restriction of $\pi$ to $S^*M$. A (geodesic) ray $\gamma$ is a curve on $M$ that is the projection onto $M$ of a geodesic curve on $S^*M$, that is, $\gamma(t) = \pi\circ\varphi_t(z)$ for some $z\in S^*M$. We denote by $\Gamma$ the set of all geodesic rays.

Given any $k\in\N^*$, we denote by $\mathcal{S}^k(M)$ the set of classical symbols of order $k$ on $M$, and by $\Psi^k(M)$ the set of pseudodifferential operators of order $k$ (see \cite{Hormander,Zworski}). Choosing a quantization $\Op$ on $M$ (for instance, the Weyl quantization), given any $a\in \mathcal{S}^k(M)$, we have $\Op(a)\in \Psi^k(M)$. Any element of $\Psi^0(M)$ is a bounded endomorphism of $L^2(M,dx_g)$.

Throughout the paper, we denote by $\langle\ ,\ \rangle$ the scalar product in $L^2(M,dx_g)$ and by $\Vert\ \Vert$ the corresponding norm.

We consider the Laplace-Beltrami operator $\triangle$ on $(M,g)$. Its positive square root, $\sqrt{\triangle}$, is a selfadjoint pseudodifferential operator of order one, of principal symbol $\sigma_P(\sqrt{\triangle})=g^\star$, the cometric of $g$ (defined on $T^*M$). The spectrum of $\sqrt{\triangle}$ is discrete and is denoted by $\mathrm{Spec}(\sqrt{\triangle})$. The set of normalized (i.e., of norm one in $L^2(M,dx_g)$) real-valued eigenfunctions $\phi$ is denoted by $\mathcal{E}$.

\medskip

We say that the manifold $M$ is Zoll whenever all its geodesics are periodic (see \cite{Besse}). Zoll manifolds have been characterized within a spectral viewpoint in \cite{Guillemin_Duke,Helton}, where it has been shown that, in some sense, periodicity of geodesics is equivalent to periodicity in the spectrum of $\sqrt{\triangle}$ (see Section \ref{sec:knownresults} for details).

Given any $T>0$ and any Lebesgue measurable subset $\omega$ of $M$, denoting by $\chi_\omega$  the characteristic function of $\omega$, we define the geometric quantities
$$
g_2^T(\omega) = \inf_{\gamma\in\Gamma} \frac{1}{T} \chi_\omega(\gamma(t))\, dt \quad\textrm{and}\quad
g_2(\omega) = \liminf_{T\rightarrow +\infty} g_2^T(\omega)  
$$
and, denoting by $\mathcal{E}$ the set of eigenfunctions $\phi$ of $\sqrt{\triangle}$ of norm one in $L^2(M,dx_g)$, we define the spectral quantities
$$
g_1(\omega) = \inf_{\phi\in\mathcal{E}} \int_\omega \phi^2\, dx_g
$$
and, for $\omega$ Borel measurable, 
$$
g_1''(\omega) = \inf_{\mu\in \I(S^*M)} \mu(\pi^{-1}(\omega))
$$
where $\I(S^*M)$ is the set of probability Radon measures $\mu$ on $S^*M$ that are invariant under the geodesic flow. It is always true that $g_2^T(\omega)\leq g_2(\omega)\leq g_1''(\omega)$.
These geometric and spectral functionals are defined in a more general setting in Sections \ref{sec:geometricquantities} and \ref{sec:spectralquantities}, as well as several others which are of interest. 

Our main result (Theorem \ref{mainthm}), formulated in Section \ref{sec:mainresults}, provides new characterizations of Zoll manifolds and relations with quantum limits, among which we quote the following:

\medskip
\boxed{
\begin{minipage}{13.5cm}
\begin{itemize}
\item $M$ is Zoll $\ \Longleftrightarrow\ $ $g_2(\omega)=g_1''(\omega)$ for every $\omega\subset M$ Borel measurable.
\item $M$ is Zoll and the Dirac measure $\delta_\gamma$ along any periodic ray $\gamma\in\Gamma$ is a quantum limit on $M$\\
$\ \Longleftrightarrow\ $ there exists $T>0$ such that $g_1(\omega) \leq g_2^T(\omega)$ for any closed subset $\omega\subset M$.
\item If $g_1(\omega) \leq g_1''(\omega)$ for any closed subset $\omega\subset M$ then the Dirac measure $\delta_\gamma$ along any periodic ray $\gamma\in\Gamma$ is a quantum limit on $M$.
\item Assume that the spectrum of $\sqrt{\triangle}$ is uniformly locally finite\footnote{This means that there exists $\ell>0$ and $m\in\N^*$ such that the intersection of the spectrum with any interval of length $\ell$ has at most $m$ {\em distinct} elements (allowing multiplicity with arbitrarily large order).}. Then 
 $M$ is Zoll and for every geodesic ray $\gamma$ there exists a quantum limit $\mu$ on $M$ such that $\mu(\gamma(\mathbb{R})) >0$.
\end{itemize}
\end{minipage}
}

\medskip 

\noindent Here, a quantum limit on $M$ is defined as a probability Radon measure on $M$ that is a weak limit of a sequence of probability measures $\phi_\lambda^2\, dx_g$. 
The last item above slightly generalizes some results of \cite{Guillemin_Duke, Helton, Macia_CPDE2008}.

\medskip

The study of $g_1(\omega)$ and $g_2(\omega)$ done in this paper is in particular motivated by the following inequality on the observability constant for the wave equation on $M$:
$$
\frac{1}{2}\min(g_1(\omega), g_2(\mathring{\omega}))  \leq \lim_{T \to +\infty} \frac{C_T(\omega)}{T}  \leq \frac{1}{2} \min(g_1(\omega),g_2(\overline\omega)) 
$$ 
which is valid for any Lebesgue measurable subset $\omega$ of $M$ (see \cite[Theorems 2 and 3]{HPT1}). Here, given any $T>0$, the observability constant $C_T(\omega)$ is defined as the largest possible nonnegative constant $C$ such that the observability inequality
\begin{equation*} 
C \Vert (y(0,\cdot),\partial_ty(0,\cdot))\Vert_{L^2(M)\times H^{-1}(M)}^2 \leq \int_0^T\int_\omega \vert y(t,x)\vert^2 \,dx_g \, dt
\end{equation*}
is satisfied for all possible solutions of the wave equation $\partial_{tt}y- \triangle y=0$.

\medskip

The article is organized as follows. In Sections \ref{sec:geometricquantities} and \ref{sec:spectralquantities}, we define with full details the various geometric and spectral quantities that are of interest for the forthcoming results. In Section \ref{sec:knownresults}, we recall several known results about new characterizations of Zoll manifolds. In Section \ref{sec:mainresults}, we gather all the new results and estimates about the characterization of Zoll manifolds and the relations with quantum limits. Finally, the proofs of these results are all postponed to Section \ref{sec:proofs}.

\subsection{Geometric quantities}\label{sec:geometricquantities}
Given any bounded measurable function $a$ on $(S^*M,\mu_L)$ and given any $T>0$, we define
$$
\boxed{
g_2^T(a) = \inf_{z\in S^*M} \frac{1}{T} \int_0^T a\circ\varphi_t(z)\, dt =  \inf_{z\in S^*M} \bar a_T(z)
}
$$
where $\bar a_T(z) = \frac{1}{T} \int_0^T a\circ\varphi_t(z)\, dt$. Note that $g_2^T(a)=g_2^T(a\circ\varphi_t)$, i.e., $g_2^T$ is invariant under the geodesic flow. We set
$$
\boxed{
g_2(a) = \liminf_{T\rightarrow +\infty} \underset{g_2^T(a)}{\underbrace{\inf_{z\in S^*M} \frac{1}{T} \int_0^T a\circ\varphi_t(z)\, dt}} = \liminf_{T\rightarrow +\infty} \inf_{z\in S^*M}\bar a_T(z)
}
$$
and 
$$
\boxed{
g_2'(a) =  \inf_{z\in S^*M}  \liminf_{T\rightarrow +\infty} \frac{1}{T} \int_0^T a\circ\varphi_t(z)\, dt = \inf_{z\in S^*M} \liminf_{T\rightarrow +\infty} \bar a_T(z)
}
$$
Note that we always have $g_2(a)\leq g_2'(a)$.

Given two functions on $S^*M$ such that $a=\tilde a$ $\mu_L$-almost everywhere, we may have $g_2(a)\neq g_2(\tilde a)$ and $g_2'(a)\neq g_2'(\tilde a)$. Indeed, taking $a=1$ everywhere on $S^*M$ and $\tilde a=1$ as well except along one geodesic, one has $g_2(a)=1$ and $g_2(\tilde a)=0$, although $a=\tilde a$ almost everywhere.

Note that $g_2^T$, $g_2$ and $g_2'$ are inner measures on $S^*M$, which are invariant under the geodesic flow. They are superadditive but not subadditive in general (and thus, they are not measures).

We can pushforward them to $M$ under the canonical projection $\pi:S^*M\rightarrow M$: given any bounded measurable function $f$ on $(M,dx_g)$, we set
$$
(\pi_*g_2^T)(f) = g_2^T(\pi^*f) = g_2^T(f\circ\pi) = \inf_{z\in S^*M} \frac{1}{T} \int_0^T f\circ\pi\circ\varphi_t(z)\, dt = \inf_{\gamma\in\Gamma} \frac{1}{T} \int_0^T f(\gamma(t))\, dt 
$$
and accordingly,
$$
(\pi_*g_2)(f) = \liminf_{T\rightarrow +\infty} \underset{g_2^T(f)}{\underbrace{\inf_{\gamma\in\Gamma} \frac{1}{T} \int_0^T f(\gamma(t))\, dt}}, \qquad
(\pi_*g_2')(f) =  \inf_{\gamma\in\Gamma}  \liminf_{T\rightarrow +\infty} \frac{1}{T} \int_0^T f(\gamma(t))\, dt ,
$$
that we simply denote by $g_2^T(f)$, $g_2(f)$ and $g_2'(f)$ respectively when the context is clear.
Also, given any Lebesgue measurable\footnote{Here, measurability is considered in the Lebesgue sense, that is, for instance, for the measure $\pi_*\mu_L$ on $M$.} subset $\omega$ of $M$, denoting by $\chi_\omega$ the characteristic function of $\omega$, defined by $\chi_\omega(x)=1$ if $x\in\omega$ and $\chi_\omega(x)=0$ otherwise, we often denote by $g_2^T(\omega)$, $g_2(\omega)$ and $g_2'(\omega)$ instead of $g_2^T(\chi_\omega)$, $g_2(\chi_\omega)$ and $g_2'(\chi_\omega)$ respectively.
Note that the real number
$$\liminf_{T\rightarrow +\infty} \frac{1}{T} \int_0^T \chi_{\omega}(\gamma(t))\, dt$$
represents the average time spent by the ray $\gamma$ in $\omega$.

It is interesting to notice that, for $\omega\subset M$ open, if $g_2(\omega)=0$ then $g_2'(\omega)=0$ (see Lemma \ref{lemg2g2prime}).

\begin{remark}\label{rem_g_2_sup}
Given any bounded measurable function $a$ on $(S^*M,\mu_L)$, we have 
$$
g_2^T(a) \leq g_2(a)\quad\forall T>0\qquad\textrm{and}\qquad
g_2(a) = \lim_{T\rightarrow+\infty}g_2^T(a)=\sup_{T>0} g_2^T(a) = \sup_{T>0} \inf \bar a_T .
$$
Indeed, let $T_m$ converging to $+\infty$ such that $\lim_m g_2^{T_m}(a) = g_2(a)$. In the following, $\lfloor x\rfloor$ denotes the integer part of the real number $x$. Writing $T_m = \lfloor \frac{T_m}{T} \rfloor T + \delta_m$ for some $\delta_m \in [0,T]$, and setting $n_m=  \lfloor \frac{T_m}{T} \rfloor$, we have
$$
g_2^{T_m}(a) = \inf \left( \frac{1}{ T_m} \int_0^{n_mT}  a\circ\varphi_t\, dt + \frac{1}{T_m} \int_{n_mT}^{n_M T+ \delta_m}   a\circ\varphi_t\, dt \right) 
\geq  \inf \left( \frac{1}{ T_m} \sum_{k=0}^{n_m-1} \int_{kT}^{(k+1)T}  a\circ\varphi_t\, dt \right) . 
$$
Noting that $\frac{1}{T}  \int_{kT}^{(k+1)T}  a\circ\varphi_t\, dt \geq g_2^T(a)$ for every $k$, we obtain $g_2^{T_m}(a) \geq \frac{n_m T}{T_m}g_2^T(a)$. 
The claim follows by letting $T_m$ tend to $+\infty$. Note that this argument is exactly the one used to establish Fekete's Lemma: indeed, for $a$ fixed the function $T\mapsto T g_2^T(a)$ is superadditive.
\end{remark}

\begin{remark}
We have $g_2^T(a)\leq \mu(a)$ for every $T>0$, for every Borel measurable function $a$ on $S^*M$, and for every probability measure $\mu$ on $S^*M$ that is invariant under the geodesic flow.
We will actually establish in Lemma \ref{leminegalites} a more general result.
%
\end{remark}

\begin{remark}\label{remEgorov}
Setting $a_t=a\circ\varphi_t$, and assuming that $a\in C^0(S^*M)$ is the principal symbol of a pseudo-differential operator $A\in\Psi^0(M)$ (of order $0$), that is, $a=\sigma_P(A)$, we have, by the Egorov theorem (see \cite{Hormander,Zworski}),
$$
a_t = a\circ\varphi_t = \sigma_P(A_t)\qquad\textrm{with}\qquad A_t = e^{-it\sqrt{\triangle}}Ae^{it\sqrt{\triangle}}
$$
where $\sigma_P(\cdot)$ is the principal symbol.
Accordingly, we have $\bar a_T=\sigma_P(\bar A_T)$ with 
$$
\bar A_T=\frac{1}{T}\int_0^T A_t\, dt=\frac{1}{T}\int_0^T e^{-it\sqrt{\triangle}}Ae^{it\sqrt{\triangle}}\, dt .
$$
We provide hereafter a microlocal interpretation of the functionals $g_2^T$, $g_2$, $g_2'$ and give a relationship with the wave observability constant.
\end{remark}

\paragraph{Microlocal interpretation of $g_2^T$, $g_2$, $g_2'$, and of the wave observability constant.}
Let $f_T$ be such that $\hat f_T(t) = \frac{1}{T}\chi_{[0,T]}(t)$, i.e., $f_T(t)=\frac{1}{2\pi}ie^{iTt/2}\mathrm{sinc}(Tt/2)$).
Note that $\int_\R \hat f_T=1$, i.e., equivalently, $f_T(0)=1$.
Using that $a\circ e^{tX} = (e^{tX})^*a=e^{tL_X}a=e^{itS}a$, we get
$$
g_2^T(a) = \inf_{z\in S^*M} \frac{1}{T} \int_0^T a\circ e^{tX}(z)\, dt
= \inf_{z\in S^*M} \int_\R \hat f_T(t) e^{itS}a\, dt\ (z)
= \inf f_T(S)a .
$$
Besides, setting $A=\Op(a)$, we have
$$
\bar A_T(a) = \frac{1}{T} \int_0^T e^{-it\sqrt{\triangle}} a e^{it\sqrt{\triangle}}\, dt
= \int_\R \hat f_T(t) e^{-it\sqrt{\triangle}} a e^{it\sqrt{\triangle}}\, dt
= A_{f_T} = \sum_{\lambda,\mu} f_T(\lambda-\mu) P_\lambda A P_\mu.
$$
Restricting to half-waves, the wave observability constant is therefore given (see \cite{HPT1}) by
$$
C_T(a) = \inf_{\Vert y\Vert=1} \langle \bar A_T(a) y,y\rangle = \inf_{\Vert y\Vert=1} \langle A_{f_T} y,y\rangle .
$$
Note that
\begin{multline*}
\langle A_{f_T} y,y\rangle = \sum_{\lambda,\mu} f_T(\lambda-\mu) \langle AP_\lambda y, P_\mu y\rangle
= \sum_{\lambda,\mu} f_T(\lambda-\mu) \int_M a \, P_\lambda y\, \overline{P_\mu y} \\
= \sum_{\lambda,\mu} f_T(\lambda-\mu) a_\lambda \bar a_\mu \int_M a \phi_\lambda \phi_\mu 
\end{multline*}
and we thus recover the expression of $C_T(a)$ by series expansion.

Note also that, as said before, the principal symbol of $A_{f_T}=\bar A_T(a)$ is 
$$
\sigma_P(A_{f_T}) = \sigma_P(\bar A_T(a)) = a_{f_T} = \int_\R \hat f_T(t) a\circ e^{tX}\, dt = f_T(S)a
$$
and that $g_2^T(a) = \inf \sigma_P(\bar A_T(a))$.

Note as well that
$$
g_2'(a) = \inf_{S^*M} \liminf_{T\rightarrow+\infty} f_T(S) a
$$
and that $f_T$ converges pointwise to $\chi_{\{0\}}$ as $T\rightarrow +\infty$, and uniformly to $0$ outside of $0$. Since $S=\frac{1}{i}L_X$ is selfadjoint with compact inverse, it has a discrete spectrum $0=\mu_0<\mu_1<\cdots$ associated with eigenfunctions $\psi_j$. If $a=\sum a_j\psi_j$, then $f_T(S)a=\sum f_T(\mu_j)a_j\psi_j\rightarrow a_0\psi_0$ as $T\rightarrow +\infty$. In other words, we have
$$
g_2'(a) = \inf Q_0 a
$$
where $Q_0$ is the projection onto the eigenspace of $S$ associated with the eigenvalue $0$, which is also the set of functions that are invariant under the geodesic flow.

\subsection{Spectral quantities}\label{sec:spectralquantities}
Recall that $\mathcal{E}$ is the set of normalized (i.e., of norm one in $L^2(M,dx_g)$) real-valued eigenfunctions $\phi$ of $\sqrt{\triangle}$.
Choosing a quantization\footnote{A quantization is constructed by covering the closed manifold $M$ with a finite number of coordinate charts; once this covering is fixed, by using a smooth partition of unity, we define the quantization of symbols that are supported in some coordinate charts, and there we choose a quantization, for instance the Weyl quantization.}
$\Op$ on $M$, given any symbol $a\in\mathcal{S}^0(M)$ of order $0$, we define 
$$
\boxed{
g_1(a) = \inf_{\phi\in\mathcal{E}} \langle\Op(a)\phi,\phi\rangle
}
$$
Note that this definition depends on the chosen quantization. In order to get rid of the quantization, one could define $g_1(A) = \inf_{\phi\in\mathcal{E}} \langle A\phi,\phi\rangle$ for every $A\in\Psi^0(M)$ that is nonnegative and selfadjoint.
Note that $g_1(A)$ is then the infimum of eigenvalues of the  operator $A$ on $L^2(M,dx_g)$. 

As we have done for $g_2$, we pushforward the functional $g_1$ to $M$ under the canonical projection $\pi$, and we set (noting that $\Op(f\circ\pi)\phi=f\phi$)
$$
(\pi_*g_1)(f) = g_1(f\circ\pi) = \inf_{\phi\in\mathcal{E}} \int_M f \phi^2\, dx_g
$$
for every $f\in C^0(M)$, which we also denote by $g_1(f)$. Note that the definition of $g_1(f)$ still makes sense for essentially bounded Lebesgue measurable functions $f$ on $(M,dx_g)$ which need not be continuous.

Accordingly, given any Lebesgue measurable subset $\omega$ of $M$, we will often denote by 
$$
\boxed{
g_1(\omega) = \inf_{\phi\in\mathcal{E}} \int_\omega \phi^2\, dx_g
}
$$
to designate the quantity $g_1(\chi_\omega)$.
Note that, like $g_2$, the functional $g_1$ is an inner measure (which is not sub-additive in general).

\begin{remark}\label{remg1}
It is interesting to note that, given any Lebesgue measurable subset $\omega$ of $M$ such that $\partial\omega=\overline{\omega}\setminus\mathring{\omega}$ has zero Lebesgue measure (i.e., $\omega$ is a Jordan measurable set), we have
$$
g_1(\mathring{\omega}) = g_1(\omega) = g_1(\overline{\omega}).
$$
Indeed, in this case we have $\int_{\mathring{\omega}} \phi^2\, dx_g = \int_\omega \phi^2\, dx_g = \int_{\overline{\omega}} \phi^2\, dx_g$ for every $\phi\in\mathcal{E}$.

More generally, we have $g_1(f)=g_1(\tilde f)$ for all essentially bounded Lebesgue measurable functions $f$ and $\tilde f$ coinciding Lebesgue almost everywhere on $(M,dx_g)$.
\end{remark}


\paragraph{Quantum limits.}
We recall that a \textit{quantum limit} (QL in short) $\mu$, also called \textit{semi-classical measure}, is a probability Radon (i.e., probability Borel regular) measure on $S^*M$ that is a closure point (weak limit), as $\lambda\rightarrow+\infty$, of the family of Radon measures $\mu_\lambda(a)=\langle\Op(a)\phi_\lambda,\phi_\lambda\rangle$ (which are asymptotically positive by the G\aa rding inequality), where $\phi_\lambda$ denotes an eigenfunction of norm $1$ associated with the eigenvalue $\lambda$ of $\sqrt{\triangle}$. We speak of a \textit{QL on $M$} to refer to a closure point (for the weak topology) of the sequence of probability Radon measures $\phi_\lambda^2\, dx_g$ on $M$ as $\lambda\rightarrow +\infty$. 
Note that QLs do not depend on the choice of a quantization. We denote by $\QL(S^*M)$ (resp., $\QL(M)$) the set of QLs (resp., the set of QLs on $M$). Both are compact sets.

Given any $\mu\in\QL(S^*M)$, the Radon measure $\pi_*\mu$, image of $\mu$ under the canonical projection $\pi:S^*M\rightarrow M$, is a probability Radon measure on $M$. It is defined, equivalently, by $(\pi_*\mu)(f) = \mu(\pi^*f) = \mu(f\circ\pi)$ for every $f\in C^0(M)$ (note that, in local coordinates $(x,\xi)$ in $S^*M$, the function $f\circ\pi$ is a function depending only on $x$), or by $(\pi_*\mu)(\omega)=\mu(\pi^{-1}(\omega))$ for every $\omega\subset M$ Borel measurable (or Lebesgue measurable, by regularity).
It is easy to see that\footnote{Indeed, given any $f\in C^0(M)$ and any $\lambda\in\mathrm{Spec}(\sqrt{\triangle})$, we have
$$
(\pi_*\mu_\lambda)(f) = \mu_\lambda(\pi^*f) = \langle\Op(\pi^*f)\phi_\lambda,\phi_\lambda\rangle = \int_M f \phi_\lambda^2\, dx_g ,
$$
because $\Op(\pi^*f)\phi_\lambda=f\phi_\lambda$. The equality then easily follows by weak compactness of probability Radon measures.
}
\begin{equation}\label{eq_QLbase}
\pi_* \QL(S^*M) = \QL(M) .
\end{equation}
In other words, QLs on $M$ are exactly the image measures under $\pi$ of QLs.


\medskip

Given any 
bounded Borel measurable function $a$ on $S^*M$, we define
$$
\boxed{
g_1'(a) = \inf_{\mu\in\QL(S^*M)} \int_{S^*M}a\, d\mu 
}
$$
As before, we pushforward the functional $g_1'$ to $M$, by setting $(\pi_*g_1')(f) = g_1'(f\circ\pi)$ for every $f\in C^0(M)$, which we often denote by $g_1'(f)$. Thanks to \eqref{eq_QLbase}, we have
$$
g_1'(f) = \inf_{\nu\in\QL(M)} \nu(f).
$$
It makes also sense to define $g_1'(\omega)$ for any measurable subset $\omega$ of $M$, by setting
$$
\boxed{
g_1'(\omega) = \inf_{\nu\in\QL(M)} \nu(\omega)
}
$$

\begin{remark}\label{remsphere}
In contrast to Remark \ref{remg1}, we may have $g_1'(\omega)\neq g_1'(\overline{\omega})$ even for a Jordan set $\omega$. This is the case if one takes $M=\mathbb{S}^2$, the unit sphere in $\R^3$, and $\omega$ the open northern hemisphere. Indeed, the Dirac along the equator is a QL (see, e.g., \cite{JakobsonZelditch}) and thus $g_1'(\omega)=0$. But we have $g_1'(\overline{\omega})=1/2$ (the infimum is reached for any QL that is the Dirac along a great circle transverse to the equator).
\end{remark}

\begin{remark}\label{remg1g1'}
Given any $\nu\in\QL(M)$, there exists a sequence of $\lambda\rightarrow+\infty$ such that $\phi_\lambda^2\, dx_g\rightharpoonup\nu$, and it follows from the Portmanteau theorem (see Appendix \ref{app:Portmanteau}) that $\nu(\omega)\geq \lim_{\lambda\rightarrow+\infty} \int_\omega \phi_\lambda^2\, dx_g\geq g_1(\omega)$ for any closed subset $\omega$ of $M$.
Hence
$$
g_1(\omega) \leq g_1'(\omega)\qquad\forall\omega\subset M\ \textrm{closed},
$$
or, more generally, for every Borel subset $\omega$ of $M$ not charging any QL on $M$. Even more generally, we have
$$
g_1(a)\leq g_1'(a)
$$
for every bounded Borel function $a$ on $S^*M$ for which the $\mu$-measure of the set of discontinuities of $a$ is zero for every $\mu\in\QL(S^*M)$.
In particular the inequality holds true for any $a\in\mathcal{S}^0(M)$.
We refer to Lemma \ref{lem_ineq_g1} for this result.

The above inequality may be wrong without the specific assumption on $\omega$ or on $a$. Indeed, consider again the example given in Remark \ref{remsphere}: $M=\mathbb{S}^2$, $\omega$ is the open northern hemisphere, then $g_1(\omega)=g_1(\overline{\omega})=1/2$ (by symmetry arguments as in \cite{Lebeau_JEDP1992}), whereas $g_1'(\omega)=0$.

Finally, it is interesting to notice that, for $\omega\subset M$ open, if $g_1(\omega)=0$ then $g_1'(\omega)=0$ (see Lemma \ref{lem_ineq_g1}).
\end{remark}

\paragraph{Invariant measures.}
Let $\I(S^*M)$ be the set of probability Radon measures on $S^*M$ that are invariant under the geodesic flow. It is a compact set. It is well known that, by the Egorov theorem, we have $\QL(S^*M)\subset\I(S^*M)$.\footnote{Indeed, by the Egorov theorem (see also Remark \ref{remEgorov}), $a_t=a\circ\varphi_t$ is the principal symbol of $A_t=e^{-it\sqrt{\triangle}}\Op(a)e^{it\sqrt{\triangle}}$. Let $\mu\in\QL(S^*M)$. By definition, $\mu(a)$ is the limit of (some subsequence of) $\langle\Op(a)\phi_j,\phi_j\rangle$, hence $\mu(a\circ\varphi_t)=\lim\langle\Op(a)e^{it\sqrt{\triangle}}\phi_j,e^{it\sqrt{\triangle}}\phi_j\rangle=\mu(a)$ because $e^{it\sqrt{\triangle}}\phi_j=e^{it\lambda_j}\phi_j$.}
The converse inclusion is not true. However, it is known that, if $M$ has the spectral gap property\footnote{We say that $M$ has the spectral gap property if there exists $c>0$ such that $\vert\lambda-\mu\vert\geq c$ for any two {\em distinct} eigenvalues $\lambda$ and $\mu$ of $\sqrt{\triangle}$. This property allows multiplicity.}, then $M$ is Zoll (i.e., all its geodesics are periodic) and $\QL(S^*M)=\I(S^*M)$ (see \cite[Theorem 2 and Remark 3]{Macia_CPDE2008}).

Given any 
bounded Borel measurable function $a$ on $S^*M$, we define
$$
\boxed{
g_1''(a) = \inf_{\mu\in \I(S^*M)} \int_{S^*M}a\, d\mu  
}
$$
Since $\QL(S^*M)\subset\I(S^*M)$, we have
$$
g_1''(a)\leq g_1'(a)   
$$
for every bounded Borel measurable function $a$ on $S^*M$. As before, given any bounded Borel measurable function $f$ on $M$, the notation $g_1''(f)$ stands for $g_1''(f\circ\pi)$ without any ambiguity.

\begin{remark}\label{rem_KM}
Since the set of extremal points of $\I(S^*M)$ is the set of ergodic measures, in the definition of $g_1''$ we can replace $\I(S^*M)$ by the set of ergodic measures in the infimum.
\end{remark}

\begin{remark}\label{rem_Sigmund}
It follows from \cite{Sigmund_1972} that, if the manifold $M$ (which is connected and compact) is of negative curvature then the set of Dirac measures $\delta_\gamma$ along periodic geodesic rays $\gamma\in\Gamma$ is dense in $\I(S^*M)$ for the vague topology, and therefore
$$
g_1''(a) = \inf_{\gamma\in\Gamma\ \textrm{periodic}} \delta_\gamma(a)
= \inf \left\{ \frac{1}{T} \int_0^{T} a \circ \varphi_t(z)\, dt \ \mid\ z\in S^*M,\ T>0,\ \varphi_T(z)=z \right\}
$$
for every continuous function $a$ on $S^*M$.
\end{remark}

\subsection{Known results on Zoll manifolds}\label{sec:knownresults}
Recall that a Zoll manifold is a smooth connected closed Riemannian manifold without boundary, of which all geodesics are periodic. 
Thanks to a theorem by Wadsley (see \cite{Besse}), they have a least common period $T>0$. This does not mean that all geodesics are $T$-periodic: there may exist exceptional geodesics with period less than $T$, like in the lens-spaces, that are quotients of $\mathbb{S}^{2m-1}$ by certain finite cyclic groups of isometries.

Note that, in some of the existing literature, ``Zoll manifold" means that not only all geodesics are periodic, but also, have the same period. Here, we relax the latter statement (we could name this kind of manifold a ``weak Zoll manifold").

We consider the eigenvalues $\lambda$ of the operator $\sqrt{\triangle}$ considered on the compact manifold $M$.
Let $X$ be the Hamiltonian vector field on $S^*M$ of $\sqrt{\triangle}$. 
Note that $e^{tX}=\varphi_t$ for every $t\in\R$.
Denoting by $L_X$ the Lie derivative with respect to $X$, we define the self-adjoint operator $S=\frac{1}{i}L_X$. We also define $\Sigma$ as the set of closure points of all $\lambda-\mu$.

Let $A\in\Psi^0(M)$, of principal symbol $a$. For every function $f$ on $\R$, we set
$$
A_f = \int_\R \hat{f}(t) e^{-it\sqrt{\triangle}} A e^{it\sqrt{\triangle}} \,dt.
$$
Following \cite{Guillemin_Duke}, its principal symbol is computed on a finite time interval (by the Egorov theorem) and by passing to the limit (by using the Lebesgue dominated convergence theorem), and we get
$$
a_f = \sigma_P(A_f) = \int_\R \hat{f}(t) a \circ e^{tX} \, dt = \int_\R \hat{f}(t) (e^{tX})^*a \, dt .
$$
We will also denote $\varphi_t=e^{tX}$.

Besides, we have
$$
(e^{tX})^*a = a\circ e^{tX} = e^{tL_X}a = e^{itS}a ,
$$
and hence
$$
a_f = \sigma_P(A_f) = \int_\R \hat{f}(t) e^{itS}a \,dt = f(S)a.
$$
Denoting by $P_\lambda$ the projection onto the eigenspace corresponding to the eigenvalue $\lambda$, we have 
\begin{equation}\label{specdecomp}
\sqrt{\triangle} = \sum_{\lambda\in\mathrm{Spec}(\sqrt{\triangle})} \lambda P_\lambda, \qquad    e^{it\sqrt{\triangle}}=\sum_{\lambda\in\mathrm{Spec}(\sqrt{\triangle})} e^{it\lambda} P_\lambda,
\end{equation}
and we obtain
$$
A_f = \sum_{\lambda,\mu} f(\lambda-\mu) P_\lambda A P_\mu.
$$
Note that, by definition of $S$, using that $L_Xa=\{H,a\}$ where $H=\sigma_P(\sqrt{\triangle}) $ (Hamiltonian), we have $Sa=\frac{1}{i}\{a,H\}$.
As a consequence, the eigenfunctions of $S$ corresponding to the eigenvalue $0$ are exactly the functions that are invariant under the geodesic flow. 

It is remarkable that periodicity of geodesics and periodicity of the spectrum are closely related (see \cite{CdV,DuistermaatGuillemin_IM1975,Guillemin_Duke, Helton}). We gather these classical results in the following theorem.

\begin{theorem}[\cite{CdV,DuistermaatGuillemin_IM1975,Guillemin_Duke, Helton}]\label{thm_Guillemin}
We have the following results:
\begin{itemize}
\item $\mathrm{Spec}(S) \subset \Sigma$.
\item If there exists a non-periodic geodesic, then $\mathrm{Spec}(S) = \R$, and thus $\Sigma = \R$.
\item $M$ is Zoll if and only if $\Sigma\neq\R$. In this case, we have $\Sigma = \frac{2\pi}{T}\Z$, where $T$ is the smallest common period.
\end{itemize}
\end{theorem}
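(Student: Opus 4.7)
The plan is to establish the three items in turn, building on the already-derived identity $A_f=\sum_{\lambda,\mu}f(\lambda-\mu)P_\lambda AP_\mu$ with $\sigma_P(A_f)=f(S)\sigma_P(A)$, together with the Egorov theorem. Items (i) and (ii) will follow from essentially elementary arguments; only the forward direction of (iii) requires a deep input. For $\mathrm{Spec}(S)\subset\Sigma$, I would argue by contraposition: given $\alpha\in\R\setminus\Sigma$, the closedness of $\Sigma$ lets me pick $f\in C^\infty_c(\R)$ with $f(\alpha)=1$ and $\mathrm{supp}(f)\cap\Sigma=\emptyset$. Only finitely many pairs $(\lambda,\mu)\in\mathrm{Spec}(\sqrt{\triangle})^2$ can then satisfy $f(\lambda-\mu)\neq 0$: using the discreteness of the spectrum, an infinite family would force the differences $\lambda_n-\mu_n$ to accumulate in the compact set $\mathrm{supp}(f)$, producing a point of $\mathrm{supp}(f)\cap\Sigma=\emptyset$. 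Hence $A_f$ is finite-rank, hence smoothing, and $f(S)\sigma_P(A)=\sigma_P(A_f)=0$ for every $A\in\Psi^0(M)$. Since every smooth symbol on $S^*M$ arises as $\sigma_P(A)$, the operator $f(S)$ vanishes on a dense subspace of $L^2(S^*M,\mu_L)$ and hence on the whole space; continuous functional calculus of the selfadjoint operator $S$ then forces $\alpha\notin\mathrm{Spec}(S)$.

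For item (ii), I would construct approximate eigenfunctions of $S$. Let $z_0\in S^*M$ have a non-periodic—and therefore injective—orbit, and fix $\alpha\in\R$. By the flow-box theorem applied along the embedded arc $\{\varphi_t(z_0):|t|\leq n\}$, one obtains a tubular neighborhood with coordinates $(t,y)$ in which $X=\partial_t$. Choosing smooth compactly supported cutoffs $\chi,\eta$ and setting
$$
\psi_n(t,y)=c_n\,e^{i\alpha t}\chi(t/n)\,\eta(y),\qquad c_n\sim n^{-1/2}\ \text{so that}\ \|\psi_n\|_{L^2}=1,
$$
a routine computation gives $\|(S-\alpha)\psi_n\|_{L^2}=O(n^{-1})$. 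Thus $\alpha$ lies in the approximate point spectrum of $S$; since $\alpha\in\R$ is arbitrary, $\mathrm{Spec}(S)=\R$, and combining with (i) yields $\Sigma=\R$.

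The implication $\Sigma\neq\R\Rightarrow M$ Zoll in item (iii) is then the contrapositive of (ii). For the description $\Sigma=\tfrac{2\pi}{T}\Z$ under the Zoll assumption with smallest common period $T$, I would invoke the clustering theorem of Duistermaat--Guillemin \cite{DuistermaatGuillemin_IM1975}: there exist $\beta\in\R$ and $C>0$ such that, for $k$ large, $\mathrm{Spec}(\sqrt{\triangle})$ is contained in the union of the intervals $\bigl[\tfrac{2\pi}{T}(k+\beta)-\tfrac{C}{k},\tfrac{2\pi}{T}(k+\beta)+\tfrac{C}{k}\bigr]$. The inclusion $\Sigma\subset\tfrac{2\pi}{T}\Z$ is then immediate, while the reverse inclusion follows by picking, for each $k_0\in\Z$, eigenvalues $\lambda_n,\lambda_{n-k_0}$ in successive clusters so that $\lambda_n-\lambda_{n-k_0}\to\tfrac{2\pi k_0}{T}$. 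The main obstacle is precisely this clustering step, whose proof rests on the substantial fact that $e^{iT\sqrt{\triangle}}$, up to a Maslov phase, differs from the identity by a smoothing operator on a Zoll manifold; I would cite \cite{CdV,DuistermaatGuillemin_IM1975,Guillemin_Duke,Helton} for this rather than re-derive it, keeping the self-contained exposition focused on (i) and (ii).
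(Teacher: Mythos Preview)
The paper does not give its own proof of this theorem: it is stated in Section~\ref{sec:knownresults} as a classical result, with the references \cite{CdV,DuistermaatGuillemin_IM1975,Guillemin_Duke,Helton} carrying the burden of proof. So there is no in-paper argument to compare against; your sketch is essentially the standard route followed in those references, and items~(ii) and~(iii) are correctly outlined (the quasimode construction along a non-periodic orbit for~(ii), and the Duistermaat--Guillemin clustering for the Zoll direction of~(iii), are exactly what those papers do).

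One point in your treatment of item~(i) deserves care. You assert that if $\mathrm{supp}(f)\cap\Sigma=\emptyset$ then only finitely many \emph{pairs} $(\lambda,\mu)$ satisfy $f(\lambda-\mu)\neq0$, arguing that an infinite family of differences would accumulate inside $\mathrm{supp}(f)$. This is only literally true if $\Sigma$ denotes the full closure of the difference set $\{\lambda-\mu\}$; in that case one gets no pairs at all, $A_f=0$, and the argument is immediate. If, however, $\Sigma$ denotes the set of \emph{accumulation} points of $\{\lambda-\mu\}$ (which is the reading forced by the exact identity $\Sigma=\tfrac{2\pi}{T}\Z$ in the Zoll case, since individual differences $\lambda_j-\lambda_k$ need not lie exactly in $\tfrac{2\pi}{T}\Z$), then your compactness argument yields only finitely many distinct \emph{values} of $\lambda-\mu$ in $\mathrm{supp}(f)$, each of which may in principle be realised by infinitely many eigenvalue pairs. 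In that situation $A_f=\sum_{i}f(c_i)\sum_{\lambda-\mu=c_i}P_\lambda AP_\mu$ is not obviously finite-rank, and the step ``finite-rank, hence smoothing'' breaks down. The fix, as in \cite{Guillemin_Duke,Helton}, is to argue at the level of the full symbol that $A_f\in\Psi^{-\infty}(M)$ whenever $\mathrm{supp}(f)$ avoids $\Sigma$, or to adopt the closure convention for $\Sigma$ from the outset. Either way the conclusion $f(S)=0$ on $L^2(S^*M)$, and hence $\alpha\notin\mathrm{Spec}(S)$, stands.
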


Note that, if $\Sigma=\R$, then there exists a non-periodic geodesic. Indeed, otherwise any geodesic would be periodic, and by the Wadsley theorem, $M$ would be Zoll and then this implies that $\Sigma = \frac{2\pi}{T}\Z$.

Actually, if $M$ is Zoll, then, denoting by $T$ the (common) period, we have $\lambda_j  \simeq  \frac{2\pi}{T} (\sigma+n_i)$ for some $n_i\in\Z$ (asymptotic spectrum of $\sqrt{\triangle}$).
In other words, this means that the eigenvalues cluster along the net $\frac{2\pi\sigma}{T} + \frac{2\pi}{T}\Z$.

\subsection{Main results: new characterizations of Zoll manifolds}\label{sec:mainresults}
Given a periodic ray $\gamma$ on $M$, the Dirac measure $\delta_\gamma$ on $M$ is defined by $\delta_\gamma(f)=\frac{1}{T}\int_0^Tf(\gamma(t))\, dt$ for every $f\in C^0(M)$, where $T$ is the period of $\gamma$.
Accordingly, given a periodic geodesic $\tilde\gamma$ on $S^*M$, the Dirac measure $\delta_{\tilde\gamma}$ on $S^*M$ is defined by $\delta_\gamma(a)=\frac{1}{T}\int_0^Ta\circ\varphi_t(z)\, dt = \bar a_T(z)$ for every $a\in C^0(S^*M)$, where $\tilde\gamma(t)=\varphi_t(z)$ for some $z\in S^*M$.

Note that, for a periodic geodesic $\tilde\gamma$ on $S^*M$, setting $\gamma=\pi\circ\tilde\gamma$, we have $\delta_\gamma\in\QL(M)$ if and only if $\delta_{\tilde\gamma}\in\QL(S^*M)$ (this follows from Proposition \ref{propQLM} in Appendix \ref{app:measures} and from the fact that $\QL(S^*M)\subset\I(S^*M)$).
Hence, in the theorem below, saying that ``the Dirac along any periodic ray is a QL on $M$" is equivalent to saying that ``the Dirac along any geodesic is a QL".

Before stating the result hereafter, we give two definitions concerning the spectrum:
\begin{itemize}
\item We say that $M$ has the \textit{spectral gap property} if there exists $c>0$ such that $\vert\lambda-\mu\vert\geq c$ for any two {\em distinct} eigenvalues $\lambda$ and $\mu$ of $\sqrt{\triangle}$. This property allows multiplicity.
\item We say that the spectrum is \textit{uniformly locally finite} if there exist $\ell>0$ and $m\in\N^*$ such that the intersection of the spectrum with any interval of length $\ell$ has at most $m$ {\em distinct} elements. This does not preclude multiplicity with arbitrarily large order.
\end{itemize}

Also, in order to appreciate the contents of the following result, it is useful to note that
$$
\boxed{
g_2^T(a)\leq g_2(a)\leq g_2'(a)\leq g_1''(a) \leq g_1'(a) 
}
$$
for every $T>0$ and for every bounded Borel measurable function $a$ on $S^*M$, and that
$$
\boxed{
\begin{array}{ll}
g_1(a) \leq g_1'(a)\  & \forall a\in C^0(S^*M)  \\
g_1(\omega) \leq g_1'(\omega)\  & \forall \omega\subset M\ \textrm{closed} 
\end{array}
}
$$
These facts will be proved in Lemmas \ref{leminegalites} and \ref{lem_ineq_g1}.

The next theorem, which is the main result of this paper, gives new characterizations of Zoll manifolds and relations with quantum limits.

\begin{theorem}\label{mainthm}
\ 
\begin{enumerate}
\item The following statements are equivalent:
\begin{itemize}
\item $M$ is Zoll and $\delta_\gamma\in\QL(M)$ for every periodic ray $\gamma\in\Gamma$;
\item $g_2(a)=g_2'(a)=g_1''(a)=g_1'(a)$ for every bounded Borel measurable function $a$ on $S^*M$;
\item $g_2'(\omega)=g_1''(\omega)=g_1'(\omega)$ for every $\omega\subset M$ Borel measurable;
\item there exists $T>0$ such that $g_1(\omega) \leq g_2^T(\omega)$ for any closed subset $\omega\subset M$.
\end{itemize}
Moreover, the smallest $T>0$ such that $g_1(\omega) \leq g_2^T(\omega)$ for any closed subset $\omega\subset M$ is the smallest period of geodesics of the Zoll manifold $M$.

\item The following statements are equivalent:
\begin{itemize}
\item $M$ is Zoll;
\item $g_2(a)=g_2'(a)$ for every bounded Borel measurable function $a$ on $S^*M$;
\item $g_2(\omega)=g_2'(\omega)$ for every $\omega\subset M$ Borel measurable;
\item $g_2'(a)=g_1''(a)$ for every bounded Borel measurable function $a$ on $S^*M$;
\item $g_2'(\omega)=g_1''(\omega)$ for every $\omega\subset M$ Borel measurable;
\item there exists $T>0$ such that $g_2^T(\omega)=g_2(\omega)$ for every open subset $\omega\subset M$;
\item there exists $T>0$ such that $g_2^T(\omega)=g_2(\omega)$ for every closed subset $\omega\subset M$;
\item there exists $T>0$ such that $g_2^T(\omega) = g_2'(\omega)$ for any closed subset $\omega\subset M$;
\item $g_2(\omega)=g'_2(\omega)$ for every closed subset $\omega\subset M$.
\end{itemize}
Moreover, the smallest $T>0$ such that the items above are satisfied is the smallest period of geodesics of the Zoll manifold $M$.

\item If $g_1(\omega)\leq g_1''(\omega)$ for any $\omega\subset M$ closed then $\delta_\gamma\in\QL(M)$ for every periodic ray $\gamma\in\Gamma$.
Moreover, for every minimally invariant\footnote{A minimally invariant set is a nonempty closed invariant set containing no proper closed invariant subset.} compact set $K$, there exists a quantum limit whose support is $K$.

\item If $M$ is Zoll and is a two-point homogeneous space\footnote{By definition, this means that, given points $x_0,x_1,y_0,y_1$ in $M$ such that $d(x_0,y_0)=d(x_1,y_1)$, there exists an isometry $\varphi$ of $M$ such that $\varphi(x_0)=x_1$ and $\varphi(y_0)=y_1$. This is equivalent to say that the group $\mathrm{Iso}(M)$ of isometries of $M$ acts transitively on $M\times M$.} and if the Dirac along any periodic geodesic is a QL then $\QL(S^*M)=\I(S^*M)$.

\item Under the spectral gap assumption, $M$ is Zoll and $\delta_\gamma\in\QL(M)$ for every periodic ray $\gamma\in\Gamma$.

\item If the spectrum is uniformly locally finite then $M$ is Zoll and for every ray $\gamma\in\Gamma$ there exists $\nu\in\QL(M)$ such that $\nu(\gamma(\R))>0$.

\end{enumerate}
\end{theorem}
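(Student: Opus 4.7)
The statement has two components: first that $M$ is Zoll, and second that for every ray $\gamma\in\Gamma$ there exists a QL $\nu\in\QL(M)$ with $\nu(\gamma(\R)) > 0$. I plan to establish these in order, since the second claim exploits the Zoll structure produced by the first.

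For the first part, my strategy is to verify one of the equivalent Zoll characterizations in item~(2) of the theorem, the most accessible being $g_2'(a) = g_1''(a)$ for every bounded Borel $a$ on $S^*M$. The essential tool is the spectral representation
\[
A_{f_T} = \sum_{\lambda,\mu\in\mathrm{Spec}(\sqrt{\triangle})} f_T(\lambda-\mu)\, P_\lambda A P_\mu,
\]
whose principal symbol is $\bar a_T$ by Egorov. Under uniform local finiteness, each row of this sum has at most $2m-1$ contributing $\mu$ with $|\lambda-\mu|<\ell$, so for $T$ large enough that $f_T$ is essentially concentrated in $(-\ell,\ell)$ the operator $A_{f_T}$ reduces, modulo lower-order terms, to a cluster-block-diagonal operator whose blocks act on the spectral clusters. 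Passing to symbols and taking infima over $z\in S^*M$ should yield $g_2'(a) = g_1''(a)$ for every bounded Borel $a$, after which item~(2) of Theorem~\ref{mainthm} delivers the Zoll property, with smallest common period $T_0$ equal to the $T$ chosen above. A possible alternative path is to prove $\Sigma\neq\R$ directly via Theorem~\ref{thm_Guillemin}, by combining the distinct-eigenvalue gap $\mu_{k+m}-\mu_k>\ell$ with Weyl's asymptotic law on the counting function of $\sqrt{\triangle}$.

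For the second part, fix a periodic ray $\gamma$ of period $T$ dividing $T_0$, with lift $\tilde\gamma\subset S^*M$. Choose a nonnegative continuous function $a$ on $S^*M$ supported in a small tubular neighborhood $U$ of $\tilde\gamma$ with $\delta_{\tilde\gamma}(a) = \alpha > 0$. Egorov gives the principal symbol $\bar a_T = \frac{1}{T}\int_0^T a\circ\varphi_t\,dt$ of $\bar A_T$, which equals $\alpha$ along $\tilde\gamma$, so $\sup_{S^*M}\bar a_T\geq \alpha$. Using the cluster-block-diagonalization of $\bar A_T$ from the previous step, in each spectral cluster $C_k$ the restriction $\bar A_T|_{C_k}$ is bounded and selfadjoint, and a semiclassical max--min argument produces an eigenvector $\phi_k\in\mathcal{E}$ inside $C_k$ with $\langle \Op(a)\phi_k,\phi_k\rangle$ approaching $\sup\bar a_T\geq\alpha$ as $k\to\infty$. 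Any weak subsequential limit of the Radon measures $a'\mapsto\langle\Op(a')\phi_k,\phi_k\rangle$ gives $\mu_a\in\QL(S^*M)$ with $\mu_a(a)\geq\alpha$, hence $\pi_*\mu_a(U)\geq \alpha/\|a\|_\infty > 0$. Finally, a diagonal extraction shrinking $U$ to $\gamma(\R)$, combined with the weak compactness of $\QL(M)$, yields $\nu\in\QL(M)$ with $\nu(\gamma(\R)) > 0$.

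The main obstacle is the first part. A purely combinatorial analysis of $\Sigma$ seems too weak: at the level of abstract eigenvalue sequences satisfying $\mu_{k+m}-\mu_k>\ell$ one can already arrange the differences to be dense in~$\R$, so any proof must use that the spectrum comes from a geometric Laplacian, either through Egorov and the cluster-block-diagonalization of $A_{f_T}$ sketched above, or through the Duistermaat--Guillemin analysis of the singular support of the wave trace. Once $M$ is known to be Zoll, the construction in the second part follows the classical Weinstein--Zelditch concentration pattern on periodic orbits; the only technical care concerns the semiclassical max--min on clusters with bounded distinct-eigenvalue count but possibly unbounded total multiplicity.
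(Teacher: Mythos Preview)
Your overall plan reverses the order of the paper's argument, and the first half contains a genuine gap.

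\textbf{The gap in your Zoll argument.} You propose to verify $g_2'(a)=g_1''(a)$ for all bounded Borel $a$ by exploiting the cluster-block-diagonal structure of $A_{f_T}$. But the principal symbol of $A_{f_T}$ is just $\bar a_T$, a purely dynamical object on $S^*M$ that carries no information about the spectral clustering of $\sqrt{\triangle}$. The block structure lives at the operator level and is lost upon passing to symbols, so the sentence ``passing to symbols and taking infima over $z\in S^*M$ should yield $g_2'(a)=g_1''(a)$'' has no mechanism behind it. In fact, for \emph{continuous} $a$ the equality $g_2'(a)=g_1''(a)$ always holds (take a weak limit of the empirical measures $\frac{1}{T}\int_0^T\delta_{\varphi_t(z)}\,dt$), with no spectral hypothesis at all; the content of the characterization in item~(2) lies entirely in the Borel case, where weak convergence of measures gives you nothing. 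Your sketch does not address this. You also correctly note that the abstract combinatorics of $\Sigma$ cannot help, but then you do not supply the geometric input that would.

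\textbf{What the paper does instead.} The paper reverses your order: it proves the quantum-limit statement \emph{first}, for an arbitrary ray $\gamma$ (not assumed periodic), and then deduces Zoll as a one-line corollary. Concretely, one argues by contradiction: if $\nu(\gamma([0,T]))=0$ for every $\nu\in\QL(M)$, then $\int_{\omega_\varepsilon}\phi_\lambda\phi_\mu\,dx_g\to 0$ uniformly as $\varepsilon\to 0$ and $\lambda,\mu\to\infty$, where $\omega_\varepsilon$ is a shrinking tube around $\gamma([0,T])$. Now propagate a high-frequency coherent state $y^N$ (a Gaussian beam along $\gamma$) and expand the smeared energy $\int_\R\hat f(t)\,\Vert\chi_{\omega_\varepsilon}e^{it\sqrt{\triangle}}y^N\Vert^2\,dt$ as $\sum_{\lambda,\mu} f(\lambda-\mu)\,a_\lambda\bar a_\mu\int_{\omega_\varepsilon}\phi_\lambda\phi_\mu$, with $f$ \emph{compactly supported}. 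Uniform local finiteness is used at exactly one point: since $\mathrm{supp}(f)$ meets only boundedly many distinct eigenvalue differences, the double sum is dominated by a finite diagonal band and one may pass to the limit termwise. This forces the energy outside $\omega_\varepsilon$ to tend to $1$, contradicting the microlocal concentration of the Gaussian beam inside $\omega_\varepsilon$. Once every ray segment carries positive QL mass, any non-periodic ray would force a finite invariant measure to have infinite mass, so $M$ is Zoll.

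Your second-part max--min construction on clusters is in the right spirit and resembles the spectral-gap argument, but it becomes unnecessary once you adopt the paper's order: the QL statement is the engine, and Zoll is the corollary, not the other way around.
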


\begin{remark}
The statements 3 and 4 of the theorem above are already known.
Statement 3 is exactly the contents of \cite[Theorems 4 and 8]{Macia_CPDE2008}. Concerning the statement 4: under the spectral gap assumption, we have $\Sigma\neq\R$ and thus $M$ is Zoll by Theorem \ref{thm_Guillemin}, and the fact that the Dirac along any geodesic is a QL has been established in \cite[Theorem 2 and Remark 3]{Macia_CPDE2008}.
\end{remark}

\begin{remark}
The assumption of uniform locally finite spectrum means that clustering is possible but only with a uniformly bounded number of {\em distinct} eigenvalues, but it allows arbitrary large multiplicities of eigenvalues.

Note that, by Theorem \ref{thm_Guillemin}, $M$ is Zoll if and only if $\Sigma\neq\R$. Therefore, in turn we have obtained that if the spectrum is uniformly locally finite then we must have $\Sigma\neq\R$.
This rules out the possibility of having a spectrum consisting, for instance, of all $j^{2/n},\ j^{2/n}+q_j$, for $j\in\N^*$, where $(q_j)_{j\in\N^*}$ is a countable description of $\mathbb{Q}$ (because then we have $\mathbb{Q}\subset\Sigma$, and hence $\Sigma=\R$).
\end{remark}

\begin{remark}
By the Egorov theorem, we have the inclusion $\QL(S^*M)\subset\I(S^*M)$, and in general this inclusion is strict. Remarks are in order:
\begin{itemize}
\item We have $\QL(S^*M)=\I(S^*M)$ when $M$ is the sphere in any dimension endowed with its canonical metric (see \cite{JakobsonZelditch}), or more generally, when $M$ is a compact rank-one symmetric space, which is a special case of a Zoll manifold (see \cite{Macia_CPDE2008}).
\item We do not know if, for a given Riemannian manifold $M$, the equality $\QL(S^*M)=\I(S^*M)$ implies that $M$ is Zoll.
\item Conversely, $M$ Zoll does not imply $\QL(S^*M)=\I(S^*M)$. 
Indeed, by \cite[Theorem 1.4]{MaciaRiviere_CMP2016}, there exist two-dimensional Zoll manifolds $M$ (Tannery surfaces) for which there exists a ray $\gamma$ (and even, an open set of rays) such that $\delta_\gamma\notin\QL(M)$. In particular, for such Zoll manifolds we have $\QL(S^*M)\subsetneq\I(S^*M)$. Also, by Theorem \ref{mainthm}, there must exist a Borel measurable subset $\omega\subset M$ (and even, an open subset) such that $g_1''(\omega) < g_1'(\omega)$.
\item We do not know any example of a Zoll manifold $M$ for which the spectrum is uniformly locally finite and $\QL(S^*M)\subsetneq\I(S^*M)$. 
\item It is interesting to note that if $M$ is of negative curvature then the Dirac $\delta_\gamma$ along a periodic ray $\gamma\in\Gamma$ can never be a QL (see \cite{Anantharaman,AnantharamanNonnenmacher}). 
\end{itemize}
\end{remark}

\section{Proofs}\label{sec:proofs}

\subsection{General results}
Note the obvious fact that if $a$ and $b$ are functions such that $a\leq b$ and for which the following quantities make sense, then $g_2^T(a)\leq g_2^T(b)$, $g_2(a)\leq g_2(b)$, $g_1(a)\leq g_1(b)$, $g_1'(a)\leq g_1'(b)$ and $g_1''(a)\leq g_1''(b)$. In other words, the functionals that we have defined are nondecreasing.

\subsubsection{Semi-continuity properties}
\begin{lemma}\label{lemBorel}
Let $\omega$ be a subset of $M$, let $T>0$ be arbitrary and let $(h_k)_{k\in\N^*}$ be a uniformly bounded sequence of Borel measurable functions on $M$.
\begin{itemize}
\item If $h_k$ converges pointwise to $\chi_\omega$, then
$$
\limsup_{k\rightarrow +\infty} g_2^T(h_k)\leq g_2^T(\omega), \qquad
\limsup_{k\rightarrow +\infty} g_1'(h_k)\leq g_1'(\omega) , \qquad
\limsup_{k\rightarrow +\infty} g_1''(h_k)\leq g_1''(\omega) , 
$$
and if moreover $\chi_{\omega}\leq h_k$ for every $k\in\N^*$, then
$$
g_2^T(\omega) = \lim_{k\rightarrow +\infty} g_2^T(h_k) = \inf_{k\in\N^*} g_2^T(h_k) ,
$$
$$
g_1'(\omega) = \lim_{k\rightarrow +\infty} g_1'(h_k) = \inf_{k\in\N^*} g_1'(h_k) ,\qquad
g_1''(\omega) = \lim_{k\rightarrow +\infty} g_1''(h_k) = \inf_{k\in\N^*} g_1''(h_k) .
$$
\item If $h_k$ converges Lebesgue almost everywhere to $\chi_\omega$, then
$$
\limsup_{k\rightarrow +\infty} g_1(h_k)\leq g_1(\omega) ,
$$
and if moreover $\chi_{\omega}\leq h_k$ for every $k\in\N^*$, then
$$
g_1(\omega) = \lim_{k\rightarrow +\infty} g_1(h_k) = \inf_{k\in\N^*} g_1(h_k) .
$$
\end{itemize}
\end{lemma}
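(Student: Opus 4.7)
The plan is to reduce every statement to the dominated convergence theorem applied to a single test object of each functional — a geodesic ray $\gamma$ for $g_2^T$, a quantum limit $\nu\in\QL(M)$ for $g_1'$, an invariant measure $\mu\in\I(S^*M)$ for $g_1''$, and an eigenfunction $\phi\in\mathcal{E}$ for $g_1$ — and then to pass to the infimum over the test object. The uniform boundedness of $(h_k)$ provides the integrable dominating function in each case.

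More precisely, for the three functionals $g_2^T$, $g_1'$, $g_1''$ (under pointwise convergence), I would fix, say, $\gamma\in\Gamma$; pointwise convergence $h_k\to\chi_\omega$ on $M$ gives pointwise convergence $h_k\circ\gamma\to\chi_\omega\circ\gamma$ on $[0,T]$, and uniform boundedness allows DCT to yield
$$
\frac{1}{T}\int_0^T h_k(\gamma(t))\, dt \longrightarrow \frac{1}{T}\int_0^T \chi_\omega(\gamma(t))\, dt.
$$
Since $g_2^T(h_k)$ is bounded above by the left-hand side for every fixed $\gamma$, taking $\limsup_k$ and then the infimum over $\gamma$ delivers $\limsup_k g_2^T(h_k)\leq g_2^T(\omega)$. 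For $g_1'$ and $g_1''$ the argument is identical, applying DCT against any fixed finite Radon measure $\nu\in\QL(M)$ or $\mu\in\I(S^*M)$ to obtain $\nu(h_k)\to\nu(\omega)$, then taking infima. For $g_1$ the test measure is $\phi^2\, dx_g$, which is absolutely continuous with respect to $dx_g$, so DCT only requires $h_k\to\chi_\omega$ Lebesgue almost everywhere: fix $\phi\in\mathcal{E}$ and use that $\int_M h_k\phi^2\, dx_g\to\int_\omega\phi^2\, dx_g$ to conclude $\limsup_k g_1(h_k)\leq g_1(\omega)$.

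For the ``moreover'' part, the extra hypothesis $\chi_\omega\leq h_k$ combined with the monotonicity of each functional (noted at the start of Section \ref{sec:proofs}) gives $g_2^T(\omega)\leq g_2^T(h_k)$ for every $k$ (and likewise for $g_1$, $g_1'$, $g_1''$; for $g_1$ one uses monotonicity modulo Lebesgue-null sets, legitimate thanks to Remark \ref{remg1}). Sandwiching yields
$$
g_2^T(\omega) \;\leq\; \inf_{k}\, g_2^T(h_k) \;\leq\; \liminf_{k\to\infty} g_2^T(h_k) \;\leq\; \limsup_{k\to\infty} g_2^T(h_k) \;\leq\; g_2^T(\omega),
$$
so all four quantities coincide; hence the sequence converges and the limit equals the infimum. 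The same chain closes the argument for $g_1$, $g_1'$, $g_1''$.

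The only delicate bookkeeping — and really the sole point of the lemma — is the distinction between the two modes of convergence: the functionals $g_2^T$, $g_1'$, $g_1''$ are evaluated against measures (arc length on a ray, a QL, an invariant measure) that may charge Lebesgue-null sets, so pointwise convergence is essential, whereas $g_1$ sees only $dx_g$-essentially bounded data and therefore tolerates mere a.e.\ convergence. No deeper obstacle arises.
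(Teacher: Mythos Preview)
Your proposal is correct and follows essentially the same approach as the paper: fix a single test object (ray, QL, invariant measure, or eigenfunction), apply dominated convergence, then take the infimum; for the ``moreover'' part, sandwich using monotonicity. The only cosmetic difference is that the paper, for $g_1'$ and $g_1''$, invokes compactness of $\QL(M)$ (resp.\ $\I(S^*M)$) to pick a single $\nu$ realizing $g_1'(\omega)=\nu(\omega)$ rather than quantifying over all $\nu$ and taking the infimum at the end --- but this is the same argument.
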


\begin{proof}\ 
\begin{itemize}
\item Let $\gamma\in\Gamma$ be arbitrary. By pointwise convergence, we have  $h_k(\gamma(t))\rightarrow\chi_\omega(\gamma(t))$ for every $t\in[0,T]$, and it follows from the dominated convergence theorem that $g_2^T(h_k)\leq \frac{1}{T} \int_0^T h_k(\gamma(t))\, dt \rightarrow \frac{1}{T} \int_0^T \chi_\omega(\gamma(t))\, dt$, and thus $\limsup_{k\rightarrow +\infty} g_2^T(h_k)\leq \frac{1}{T} \int_0^T \chi_\omega(\gamma(t))\, dt$. Since this inequality is valid for any $\gamma\in\Gamma$, the first inequality follows.

By compactness of QLs, there exists $\nu\in\QL(M)$ such that $g_1'(\omega)=\nu(\omega)$. By dominated convergence, we have $g_1'(h_k)\leq \int_M h_k\, d\nu\rightarrow \int_M \chi_\omega\, d\nu=\nu(\omega)=g_1'(\omega)$, whence the second inequality. The proof for $g_1''$ is similar.

If moreover $\chi_{\omega}\leq h_k$ then $\limsup_{k\rightarrow+\infty} g_2^T(h_k)\leq g_2^T(\omega)\leq g_2^T(h_k)$ and $\limsup_{k\rightarrow+\infty} g_1'(h_k)\leq g_1'(\omega)\leq g_1'(h_k)$ and the result follows.

\item 
We have $g_1(\omega) = \inf_{\phi\in\mathcal{E}} \nu_\phi(\omega)$ with $\nu_\phi = \phi^2\, dx_g$. Let $\phi\in\mathcal{E}$ be arbitrary. For every $k\in\N^*$ we have $g_1(h_k) \leq \int_M h_k\phi^2\, dx_g$, and besides, by dominated convergence we have $\int_M h_k\phi^2\, dx_g \rightarrow \int_\omega \phi^2\, dx_g = \nu_\phi(\omega)$, hence $\limsup_{k\rightarrow+\infty} g_1(h_k)\leq \nu_\phi(\omega)$. Since $\phi\in\mathcal{E}$ is arbitrary, we get $\limsup_{k\rightarrow+\infty} g_1(h_k)\leq \inf_{\phi\in\mathcal{E}}\nu_\phi(\omega) = g_1(\omega)$.

If moreover $\chi_{\omega}\leq h_k$ then $\limsup_{k\rightarrow+\infty} g_1(h_k)\leq g_1(\omega)\leq g_1(h_k)$ and the result follows.
\end{itemize}
\end{proof}

Note that, in the proof for $g_2^T$ and $g_1'$, we use the fact that $h_k(x)\rightarrow\chi_\omega(x)$ for \textit{every} $x$. Almost everywhere convergence (in the Lebesgue sense) would not be enough.

\begin{remark}\label{rem_openclosed}
We denote by $d$ the geodesic distance on $(M,g)$. It is interesting to note that, given any subset $\omega$ of $M$:
\begin{itemize}
\item $\omega$ is open if and only if there exists a sequence of continuous functions $h_k$ on $M$ satisfying $0\leq h_k\leq h_{k+1}\leq\chi_\omega$ for every $k\in\N^*$ and converging pointwise to $\chi_\omega$.

Indeed, if $\omega$ is open, then one can take for instance $h_k(x) = \min(1, k\, d(x,\omega^c))$. Conversely, since $h_k(x)=0$ for every $x\in\omega^c$, by continuity of $h_k$ it follows that $h_k(x)=0$ for every $x\in\overline{\omega^c}=(\mathring{\omega})^c$. Now take $x\in\omega\setminus\mathring{\omega}$. We have $h_k(x)= 0$, and $h_k(x)\rightarrow\chi_\omega(x)$, hence $\chi_\omega(x)=0$ and therefore $x\in\omega^c$. Hence $\omega$ is open.

\item $\omega$ is closed if and only if there exists a sequence of continuous functions $h_k$ on $M$ satisfying $0\leq \chi_{\omega}\leq h_{k+1}\leq h_k\leq 1$ for every $k\in\N^*$ and converging pointwise to $\chi_\omega$.

Indeed, if $\omega$ is closed, then one can take $h_k(x) = \max(0,1-k\, d(x,\omega))$. Conversely, since $h_k(x)=1$ for every $x\in\omega$, by continuity of $h_k$ it follows that $h_k(x)=1$ for every $x\in\overline{\omega}$, and thus $\chi_{\overline{\omega}}\leq h_k\leq 1$. Now take $x\in\overline{\omega}\setminus\omega$. We have $h_k(x)=1$ and $h_k(x)\rightarrow\chi_\omega(x)$, hence $\chi_\omega(x)=1$ and therefore $x\in\omega$. Hence $\omega$ is closed.
\end{itemize}
\end{remark}

\begin{lemma}\label{lemopen}
Let $\omega$ be an open subset of $M$ and let $T>0$ be arbitrary. For every sequence of continuous functions $h_k$ on $M$ converging pointwise to $\chi_\omega$, satisfying moreover $0\leq h_k\leq h_{k+1}\leq\chi_\omega$ for every $k\in\N^*$, we have
\begin{equation*}
\begin{split}
g_2^T(\omega) &= \lim_{k\rightarrow +\infty} g_2^T(h_k) = \sup_{k\in\N^*} g_2^T(h_k) ,\qquad
g_2(\omega) = \lim_{k\rightarrow +\infty} g_2(h_k) = \sup_{k\in\N^*} g_2(h_k) ,\\
g_1'(\omega) &= \lim_{k\rightarrow +\infty} g_1'(h_k) = \sup_{k\in\N^*} g_1'(h_k), \qquad\ 
g_1''(\omega) = \lim_{k\rightarrow +\infty} g_1''(h_k) = \sup_{k\in\N^*} g_1''(h_k).
\end{split}
\end{equation*}
\end{lemma}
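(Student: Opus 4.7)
My approach is to prove, for each functional $g_{*}\in\{g_2^T,g_2,g_1',g_1''\}$, the two inequalities $\sup_{k}g_{*}(h_k)\leq g_{*}(\omega)$ and $g_{*}(\omega)\leq\sup_{k}g_{*}(h_k)$. The first is automatic: since $0\leq h_k\leq h_{k+1}\leq\chi_\omega$ and each of these functionals is monotone with respect to pointwise ordering, the sequence $(g_{*}(h_k))_{k\in\N^*}$ is nondecreasing and bounded above by $g_{*}(\omega)$, so its limit equals its supremum and lies below $g_{*}(\omega)$. The only nontrivial part is the reverse inequality, and I expect the main obstacle to be exactly the $g_2^T$ case, because pointwise monotone convergence of $h_k$ to $\chi_\omega$ does not yield uniform-in-$z$ convergence of the time averages.

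For $g_2^T$ I plan to work on the compact manifold $S^*M$ with the continuous functions $F_k(z)=\tfrac{1}{T}\int_0^T h_k(\pi\circ\varphi_t(z))\, dt$ and the bounded Borel function $F(z)=\tfrac{1}{T}\int_0^T \chi_\omega(\pi\circ\varphi_t(z))\, dt$. Monotone convergence in $t$ yields $F_k\uparrow F$ pointwise, while by construction $\inf_{S^*M}F_k=g_2^T(h_k)$ and $\inf_{S^*M}F=g_2^T(\omega)$. Set $\ell:=\sup_k g_2^T(h_k)$, let $z_k\in S^*M$ realize the minimum of the continuous function $F_k$ (compactness of $S^*M$), and extract a convergent subsequence $z_{k_n}\to z_{*}$. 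For any fixed $j$ and all $k_n\geq j$, the inequality $F_j(z_{k_n})\leq F_{k_n}(z_{k_n})=g_2^T(h_{k_n})\leq\ell$ passes to the limit by continuity of $F_j$, giving $F_j(z_{*})\leq\ell$. Taking the supremum over $j$ yields $F(z_{*})=\sup_j F_j(z_{*})\leq\ell$, whence $g_2^T(\omega)\leq F(z_{*})\leq\ell$. This Dini-style compactness argument is the heart of the lemma.

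For $g_2$ I will invoke Remark \ref{rem_g_2_sup}, which gives $g_2(a)=\sup_{T>0}g_2^T(a)$ for every bounded measurable $a$. Exchanging suprema and applying the $g_2^T$ result yields
\[
\sup_{k}g_2(h_k)=\sup_{k}\sup_{T>0}g_2^T(h_k)=\sup_{T>0}\sup_{k}g_2^T(h_k)=\sup_{T>0}g_2^T(\omega)=g_2(\omega).
\]

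For $g_1'$ and $g_1''$ I would exploit weak compactness. Since $h_k\in C^0(M)$ and $\QL(M)$ is weakly compact, the infimum defining $g_1'(h_k)$ is attained at some $\nu_k\in\QL(M)$; extract $\nu_{k_n}\rightharpoonup\nu_{*}\in\QL(M)$. For fixed $j$ and $k_n\geq j$, the bound $\nu_{k_n}(h_j)\leq\nu_{k_n}(h_{k_n})=g_1'(h_{k_n})\leq\ell$ passes to the limit by weak convergence against the continuous function $h_j$, giving $\nu_{*}(h_j)\leq\ell$. Monotone convergence in $j$ then yields $\nu_{*}(\omega)=\nu_{*}(\chi_\omega)=\lim_j\nu_{*}(h_j)\leq\ell$, so $g_1'(\omega)\leq\nu_{*}(\omega)\leq\ell$. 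The argument for $g_1''$ is identical after replacing $\nu\in\QL(M)$ by $\mu\in\I(S^*M)$ (also weakly compact), $h_k$ by $h_k\circ\pi\in C^0(S^*M)$, and $\omega$ by $\pi^{-1}(\omega)$.
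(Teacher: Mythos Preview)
Your proof is correct and follows the same overall strategy as the paper: for $g_2^T$, extract minimizers by compactness of $S^*M$, pass to a limit point, and show it witnesses the desired inequality; for $g_1'$ and $g_1''$, extract minimizing measures and pass to a weak limit; for $g_2$, exchange suprema via Remark~\ref{rem_g_2_sup}. The execution differs in a pleasant way. For $g_2^T$ the paper takes minimizing rays $\gamma_k\to\bar\gamma$ and combines a local Dini argument with Fatou's lemma to get $\liminf_k\tfrac{1}{T}\int_0^T h_k(\gamma_k(t))\,dt\geq\tfrac{1}{T}\int_0^T\chi_\omega(\bar\gamma(t))\,dt$; for $g_1'$ it introduces auxiliary open sets $V_\varepsilon\Subset\omega$, uses Dini on $\overline{V_\varepsilon}$, and applies the Portmanteau theorem. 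You instead run the same ``freeze an index $j$, pass $k_n\to\infty$ by continuity of $F_j$ (resp.\ of the test function $h_j$), then take $\sup_j$ by monotone convergence'' trick uniformly across all four functionals. This is a bit more streamlined and avoids both Fatou and Portmanteau, at the cost of being slightly less explicit about where openness of $\omega$ is used (it enters only implicitly, through the hypothesis $h_k\uparrow\chi_\omega$ with $h_k$ continuous, which by Remark~\ref{rem_openclosed} forces $\omega$ to be open).
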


Note that, for $g_1$, the property
$g_1(\omega) = \lim_{k\rightarrow +\infty} g_1(h_k) = \sup_{k\in\N^*} g_1(h_k) $
may fail. Indeed, take $M=\mathbb{S}^2$, $\omega$ the open Northern hemisphere, then $g_1(\omega)=1/2$ and $g_1(h_k)=0$ for every $k$.

\begin{proof}
Since $h_k\leq\chi_{\omega}$, we have $g_2^T(h_k)\leq g_2^T(\omega)$.
By continuity of $h_k$ and by compactness of geodesics, there exists a ray $\gamma_k$ such that $g_2^T(h_k) = \frac{1}{T} \int_0^T h_k(\gamma_k(t))\, dt$. Again by compactness of geodesics, up to some subsequence $\gamma_k$ converges to a ray $\bar\gamma$ in $C^0([0,T],M)$. We claim that
$$
\liminf_{k\rightarrow+\infty} h_k(\gamma_k(t))\geq \chi_\omega(\bar\gamma(t))\qquad\forall t\in[0,T].
$$
Indeed, either $\bar\gamma(t)\notin\omega$ and then $\chi_\omega(\bar\gamma(t))=0$ and  the inequality is obviously satisfied, or $\bar\gamma(t)\in\omega$ and then, using that $\omega$ is open, for $k$ large enough we have $\gamma_k(t)\in U$ where $U\subset\omega$ is a compact neighborhood of $\bar\gamma(t)$. Since $h_k$ is monotonically nondecreasing and $\chi_\omega$ is continuous on $U$, it follows from the Dini theorem that $h_k$ converges uniformly to $\chi_\omega$ on $U$, and then we infer that $h_k(\gamma_k(t))\rightarrow 1=\chi_\omega(\bar\gamma(t))$. The claim is proved.
Now, we infer from the Fatou lemma that
\begin{multline*}
g_2^T(\omega) \leq \frac{1}{T}\int_0^T\chi_\omega(\bar\gamma(t))\, dt 
\leq \frac{1}{T}\int_0^T \liminf_{k\rightarrow+\infty} h_k(\gamma_k(t)) \, dt \\
\leq \liminf_{k\rightarrow+\infty} \frac{1}{T}\int_0^T h_k(\gamma_k(t)) \, dt
= \liminf_{k\rightarrow+\infty} g_2^T(h_k)
\leq g_2^T(\omega)
\end{multline*}
and we get the equality.

Since $g_2(\omega)=\sup_{T>0} g_2^T(\omega)$ by Remark \ref{rem_g_2_sup}, interverting the $\sup$ yields
$$
g_2(\omega) = \sup_{T>0} \sup_{k\in\N^*} g_2^T(h_k) = \sup_{k\in\N^*} \sup_{T>0} g_2^T(h_k) = \sup_{k\in\N^*} g_2(h_k).
$$

By compactness of QLs, there exists $\nu_k\in \QL(M)$ such that $g_1'(h_k) = \nu_k(h_k)$. Again by compactness of QLs, up to some subsequence we have $\nu_k\rightharpoonup\bar\nu\in \QL(M)$. Since $\omega$ is open, we can write
$$
\omega = \cup_{\varepsilon>0} V_\varepsilon,\qquad V_\varepsilon = \{ x\in\omega\ \mid\ d(x,\omega^c)>\varepsilonÊ\} ,
$$
$V_\varepsilon$ being open.
Let $\varepsilon>0$ be arbitrarily fixed. By construction, $\overline{V_\varepsilon} = \{ x\in\omega\ \mid\ d(x,\omega^c)\geq \varepsilonÊ\}$ is a compact set contained in the open set $\omega$. Since $h_k$ converges monotonically pointwise to $\chi_\omega$, by the Dini theorem, $h_k$ converges uniformly to $1$ on $\overline{V_\varepsilon}$, and thus without loss of generality we write that $\chi_{V_\varepsilon}\leq h_k$.
By the Portmanteau theorem (see Appendix \ref{app:Portmanteau}), we have
$\bar\nu(V_\varepsilon)\leq \liminf_{k\rightarrow+\infty} \nu_k(V_\varepsilon)$, and we have $\nu_k(V_\varepsilon) = \int_M \chi_{V_\varepsilon}\, d\nu_k \leq \int_M h_k\, d\nu_k = g_1'(h_k)$. It follows that $\bar\nu(V_\varepsilon)\leq \liminf_{k\rightarrow+\infty} g_1'(h_k)$.
Now we let $\varepsilon$ converge to $0$ and we get that $g_1'(\omega)\leq \bar\nu(\omega)\leq \liminf_{k\rightarrow+\infty} g_1'(h_k)$. The result for $g_1'$ follows. The proof for $g_1''$ is similar.
\end{proof}

\begin{remark}
The results of Lemmas \ref{lemBorel} and \ref{lemopen} are valid as well for subsets of $S^*M$ (which is a metric space).
\end{remark}

\begin{lemma}\label{lemg2open}
Let $\omega$ be an open subset of $M$ and let $T>0$ be arbitrary.
There exists $\gamma\in\Gamma$ such that $g_2^T(\omega) = \frac{1}{T}\int_0^T\chi_\omega(\gamma(t))\, dt$, i.e., the infimum in the definition of $g_2^T(\omega)$ is reached.
\end{lemma}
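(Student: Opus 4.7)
The plan is to obtain $\gamma$ via a compactness/lower semicontinuity argument very close in spirit to the proof of Lemma \ref{lemopen}. First I would pick a minimizing sequence: let $(\gamma_k)_{k\in\N^*}\subset\Gamma$ satisfy
\[
\frac{1}{T}\int_0^T \chi_\omega(\gamma_k(t))\, dt \ \longrightarrow\ g_2^T(\omega),
\]
writing $\gamma_k(t)=\pi\circ\varphi_t(z_k)$ for some $z_k\in S^*M$. Since $M$ is closed, $S^*M$ is compact, so up to extraction $z_k\to\bar z$, and by continuous dependence of the geodesic flow on initial conditions on the compact time interval $[0,T]$, the curves $\gamma_k$ converge to $\bar\gamma(t)=\pi\circ\varphi_t(\bar z)$ uniformly on $[0,T]$, i.e. in $C^0([0,T],M)$. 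In particular $\bar\gamma\in\Gamma$.

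Next I would exploit the fact that $\omega$ is open: $\chi_\omega$ is then lower semicontinuous, so for every $t\in[0,T]$,
\[
\liminf_{k\to+\infty} \chi_\omega(\gamma_k(t)) \ \geq\ \chi_\omega(\bar\gamma(t)).
\]
Indeed, if $\bar\gamma(t)\notin\omega$ the right-hand side vanishes and the inequality is trivial, while if $\bar\gamma(t)\in\omega$ then $\gamma_k(t)\in\omega$ for $k$ large by openness of $\omega$, giving the value $1$ on both sides. An application of Fatou's lemma then yields
\[
\frac{1}{T}\int_0^T \chi_\omega(\bar\gamma(t))\, dt \ \leq\ \liminf_{k\to+\infty}\frac{1}{T}\int_0^T \chi_\omega(\gamma_k(t))\, dt \ =\ g_2^T(\omega).
\]
Combined with the opposite inequality $g_2^T(\omega)\leq \frac{1}{T}\int_0^T \chi_\omega(\bar\gamma(t))\, dt$ which holds by definition of $g_2^T(\omega)$ as an infimum over $\Gamma$, we conclude that $\bar\gamma$ achieves the infimum, which proves the lemma.

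There is no serious obstacle here: the only delicate point is that one cannot invoke continuity of $\chi_\omega$ to pass to the limit, but this is precisely offset by the fact that $\omega$ is open, which yields one-sided lower semicontinuity, exactly what Fatou needs. As an alternative but essentially equivalent route, I could invoke Lemma \ref{lemopen} together with Remark \ref{rem_openclosed}: taking an increasing sequence $h_k$ of continuous functions with $0\leq h_k\leq h_{k+1}\leq\chi_\omega$ converging pointwise to $\chi_\omega$, the infima in $g_2^T(h_k)$ are attained at some rays $\gamma_k$ (by continuity of $h_k$ and compactness of $S^*M$), and the same compactness-plus-Fatou argument used there shows that a limit ray $\bar\gamma$ of $(\gamma_k)$ achieves $g_2^T(\omega)$.
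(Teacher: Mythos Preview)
Your proof is correct and follows essentially the same approach as the paper: a minimizing sequence of rays, compactness to extract a uniformly convergent subsequence, the pointwise inequality $\chi_\omega(\bar\gamma(t))\leq\liminf_k\chi_\omega(\gamma_k(t))$ from openness of $\omega$, and Fatou's lemma to conclude. The paper argues the pointwise inequality by the same case distinction you give, so there is no substantive difference.
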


\begin{proof}
The argument is almost contained in the proof of Lemma \ref{lemopen}, but for completeness we give the detail.
Let $(\gamma_k)_{k\in\N^*}$ be a sequence of rays such that $\frac{1}{T}\int_0^T\chi_\omega(\gamma_k(t))\, dt \rightarrow g_2^T(\omega)$. By compactness of geodesics, $\gamma_k(\cdot)$ converges uniformly to some ray $\gamma(\cdot)$ on $[0,T]$.

Let $t\in[0,T]$ be arbitrary. If $\gamma(t)\in \omega$ then for $k$ large enough we have $\gamma_k(t)\in \omega$, and thus $1=\chi_{\omega}(\gamma(t))\leq \chi_{\omega}(\gamma_k(t))=1$. If $\gamma(t)\in M\setminus\omega$ then $0=\chi_{\omega}(\gamma(t))\leq \chi_{\omega}(\gamma_k(t))$ for any $k$. In all cases, we have obtained the inequality $\chi_{\omega}(\gamma(t)) \leq \liminf_{k\rightarrow+\infty} \chi_{\omega}(\gamma_k(t))$, for every $t\in[0,T]$.

By the Fatou lemma, we infer that
$$
g_2^T(\omega)Ê\leq \frac{1}{T}\int_0^T\chi_\omega(\gamma(t))\, dt \leq \frac{1}{T}\int_0^T \liminf_{k\rightarrow+\infty}\chi_\omega(\gamma_k(t))\, dt 
\leq \liminf_{k\rightarrow+\infty} \frac{1}{T}\int_0^T \chi_\omega(\gamma_k(t))\, dt = g_2^T(\omega)
$$
and the equality follows.
\end{proof}

\subsubsection{General inequalities}

\begin{lemma}\label{leminegalites}
We have
$$
g_2^T(a)\leq g_2(a)\leq g_2'(a)\leq g_1''(a) \leq g_1'(a) 
$$
for every $T>0$ and for every bounded Borel measurable function $a$ on $S^*M$.
\end{lemma}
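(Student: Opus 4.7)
The plan is to prove the chain of four inequalities in order, with the bulk of the work concentrated in the middle inequality $g_2'(a)\leq g_1''(a)$.

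First I would dispatch $g_2^T(a)\leq g_2(a)$, which is immediate from Remark \ref{rem_g_2_sup}: the superadditivity of $T\mapsto Tg_2^T(a)$ (Fekete's lemma) gives $g_2(a)=\sup_{T>0}g_2^T(a)$. For $g_2(a)\leq g_2'(a)$, I would start from the pointwise inequality $\inf_{z'\in S^*M}\bar a_T(z')\leq\bar a_T(z)$ valid for every $z$, take $\liminf_{T\to+\infty}$ on both sides, and then take $\inf_{z}$ on the right; by definition the left side is $g_2(a)$ and the right side is $g_2'(a)$. Finally $g_1''(a)\leq g_1'(a)$ is tautological since $\QL(S^*M)\subset\I(S^*M)$ by the Egorov theorem.

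The heart of the argument is $g_2'(a)\leq g_1''(a)$. Fix any $\mu\in\I(S^*M)$. Using the flow-invariance of $\mu$ together with Fubini's theorem, one gets
$$
\int_{S^*M}\bar a_T\,d\mu=\frac{1}{T}\int_0^T\!\!\int_{S^*M} a\circ\varphi_t\,d\mu\,dt=\int_{S^*M} a\,d\mu,\qquad\forall T>0.
$$
Now I would invoke the Birkhoff ergodic theorem: since $a$ is bounded and $\mu$ is flow-invariant, there exists a $\mu$-measurable, flow-invariant function $a^*$ such that $\bar a_T(z)\to a^*(z)$ for $\mu$-a.e.\ $z$ and $\int a^*\,d\mu=\int a\,d\mu$. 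The set $E=\{z\in S^*M:a^*(z)\leq\int a\,d\mu\}$ therefore has positive $\mu$-measure and intersects the full-measure set on which $\bar a_T\to a^*$. Picking any $z_0$ in this intersection yields
$$
g_2'(a)=\inf_{z\in S^*M}\liminf_{T\to+\infty}\bar a_T(z)\leq\liminf_{T\to+\infty}\bar a_T(z_0)=a^*(z_0)\leq\int_{S^*M} a\,d\mu.
$$
Taking the infimum over $\mu\in\I(S^*M)$ gives $g_2'(a)\leq g_1''(a)$.

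The main technical obstacle is exactly this step: the functional $g_2'$ involves a $\liminf$ inside an $\inf$ over $z$, so one cannot simply average $\bar a_T$ against $\mu$ and pass to the limit under the integral. The Birkhoff theorem is what allows a single point $z_0$ to be chosen \emph{independently of} $T$ such that $\bar a_T(z_0)$ actually converges and its limit is bounded by the $\mu$-mean of $a$; without pointwise convergence $\mu$-a.e.\ one would only recover $g_2(a)\leq g_1''(a)$, which is weaker. The remaining inequalities are soft and follow from elementary infimum/supremum manipulations.
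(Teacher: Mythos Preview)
Your proof is correct, but the route through Birkhoff's ergodic theorem is heavier than necessary, and your closing remark that ``without pointwise convergence $\mu$-a.e.\ one would only recover $g_2(a)\leq g_1''(a)$'' is not accurate.

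The paper proves $g_2'(a)\leq g_1''(a)$ more directly. For any $\mu\in\I(S^*M)$ one has $\int_{S^*M}\bar a_T\,d\mu=\int_{S^*M}a\,d\mu$ for every $T$, exactly as you observe. Since $a$ is bounded, Fatou's lemma (applied to $\bar a_T+\Vert a\Vert_\infty\geq 0$) gives
\[
\int_{S^*M} a\,d\mu \;=\; \liminf_{T\to+\infty}\int_{S^*M}\bar a_T\,d\mu \;\geq\; \int_{S^*M}\liminf_{T\to+\infty}\bar a_T\,d\mu .
\]
Now simply use the pointwise inequality $\liminf_{T\to+\infty}\bar a_T(z)\geq \inf_{z'}\liminf_{T\to+\infty}\bar a_T(z')=g_2'(a)$ and integrate: the right-hand side is $\geq g_2'(a)$. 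Taking the infimum over $\mu$ finishes. No pointwise convergence, and hence no Birkhoff, is required; the $\liminf$ already sits inside the definition of $g_2'$, so one only needs to bound the \emph{integral} of $\liminf_T\bar a_T$, which Fatou handles.

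Your argument via Birkhoff is a valid alternative: it produces a concrete point $z_0$ realizing the inequality rather than averaging. That is a legitimate and slightly more constructive viewpoint, but it costs an appeal to the ergodic theorem where an elementary Fatou step suffices.
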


\begin{proof}
Given any $\mu\in\I(S^*M)$, we have $\int_{S^*M}a\, d\mu = \int_{S^*M} a\circ\phi_t\, d\mu$ for any bounded Borel measurable function $a$ on $S^*M$ and for any $t\in\R$, and thus $\int_{S^*M}a\, d\mu = \int_{S^*M} \frac{1}{T} \int_0^T a\circ\phi_t\, d\mu = \int_{S^*M} \bar a_T\, d\mu$ for any $T>0$. Passing to the limit we get $\int_{S^*M}a\, d\mu = \int_{S^*M} \liminf_{T\rightarrow +\infty} \bar a_T\, d\mu \geq g_2'(a)$, because $g_2'(a) = \inf \liminf_{T\rightarrow +\infty} \bar a_T$ by definition. The result follows.
\end{proof}

\begin{lemma}\label{lem_ineq_g1}
\begin{itemize}
\item We have $g_1(a)\leq g_1'(a)$ for every bounded Borel function $a$ on $S^*M$ for which the $\mu$-measure of the set of discontinuities of $a$ is zero for every $\mu\in\QL(S^*M)$. In particular the inequality is valid for every continuous function $a$ on $S^*M$.
\item Given any closed subset $\omega$ of $M$ (or of $S^*M$), we have $g_1(\omega)\leq g_1'(\omega)$.
\item Let $\omega$ be an open subset of $M$. If $g_1(\omega)=0$ then $g_1'(\omega)=0$. 
\end{itemize}
\end{lemma}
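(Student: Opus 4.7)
The plan is to prove all three items by applying the Portmanteau theorem to the defining sequences of eigenfunctions for quantum limits. For Part 1, I would fix $\mu\in\QL(S^*M)$ together with a sequence $(\phi_\lambda)\subset\mathcal{E}$ for which the associated (asymptotically positive, by G\aa rding) functionals $\mu_\lambda : b\mapsto \langle \Op(b)\phi_\lambda,\phi_\lambda\rangle$ converge weakly to $\mu$. For continuous $a$, the convergence $\mu_\lambda(a)\to\mu(a)$ is immediate from the definition of weak convergence; for bounded Borel $a$ whose discontinuity set has $\mu$-measure zero, it follows from the functional form of the Portmanteau theorem (see Appendix \ref{app:Portmanteau}). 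Combined with the trivial bound $\mu_\lambda(a)\geq g_1(a)$ read off from the definition of $g_1$, passing to the limit and taking the infimum over $\mu\in\QL(S^*M)$ yields $g_1(a)\leq g_1'(a)$.

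For Part 2, I would exploit the pushforward identity $\pi_*\QL(S^*M)=\QL(M)$ from~\eqref{eq_QLbase}. Given a closed subset $\omega\subset M$ and $\nu\in\QL(M)$, write $\nu=\pi_*\mu$ for some $\mu\in\QL(S^*M)$; the corresponding probability measures $\phi_\lambda^2\,dx_g=\pi_*\mu_\lambda$ on $M$ then converge weakly to $\nu$. Since $\omega$ is closed, Portmanteau gives $\limsup_\lambda \int_\omega \phi_\lambda^2\,dx_g\leq \nu(\omega)$, and combining with the trivial bound $\int_\omega \phi_\lambda^2\,dx_g\geq g_1(\omega)$ produces $g_1(\omega)\leq \nu(\omega)$; taking the infimum over $\nu$ concludes. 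The case of a closed subset of $S^*M$ is handled identically, working directly with $\QL(S^*M)$ rather than pushing forward.

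For Part 3, assuming $\omega\subset M$ open with $g_1(\omega)=0$, I would pick $(\phi_k)\subset\mathcal{E}$ with $\int_\omega \phi_k^2\,dx_g\to 0$ and, by weak compactness of probability Radon measures on the compact manifold $M$, extract a subsequence along which $\phi_k^2\,dx_g\rightharpoonup \nu$ for some $\nu\in\QL(M)$. The Portmanteau theorem for the open set $\omega$ then gives $\nu(\omega)\leq \liminf_k \int_\omega \phi_k^2\,dx_g=0$, whence $g_1'(\omega)\leq \nu(\omega)=0$.

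The main technical subtlety lies in Part 1 for genuinely Borel (non-symbol) $a$: since $g_1(a)$ is initially defined only for $a\in\mathcal{S}^0(M)$, one must justify both its meaning and the inequality $\mu_\lambda(a)\geq g_1(a)$ in this broader setting. I would handle this by approximating $a$ uniformly away from a small neighborhood of its discontinuity set by classical symbols, using the asymptotic positivity of $\mu_\lambda$ to control the error, and invoking Portmanteau so that the remaining contribution vanishes in the limit thanks to the zero-measure hypothesis. Everything else reduces to monotonicity of the functionals and standard weak-convergence arguments.
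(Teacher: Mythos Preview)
Your arguments for Parts~1 and~2 are correct and essentially match the paper's approach (the paper is terser for Part~1, and for Part~2 it routes through the continuous approximation of Lemma~\ref{lemBorel} and Remark~\ref{rem_openclosed} rather than applying Portmanteau directly to the closed set, but your direct argument is exactly what appears in Remark~\ref{remg1g1'}).

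There is, however, a genuine gap in Part~3. You pick a minimizing sequence $(\phi_k)\subset\mathcal{E}$ with $\int_\omega\phi_k^2\to 0$, extract a weak limit $\nu$ of $\phi_k^2\,dx_g$, and then assert $\nu\in\QL(M)$. But a quantum limit is by definition a closure point of $\phi_\lambda^2\,dx_g$ \emph{as $\lambda\to+\infty$}; nothing in your argument forces the eigenvalues of the $\phi_k$ to diverge. If they stayed bounded, only finitely many eigenspaces would be involved, and by compactness of the unit sphere in each (finite-dimensional) eigenspace you would obtain a normalized eigenfunction $\phi$ with $\int_\omega\phi^2=0$, i.e.\ $\phi\equiv 0$ on the open set $\omega$. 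Ruling this out is precisely the missing step: the paper invokes the nodal set results of Donnelly--Fefferman, Hardt--Simon, and Lin (or analyticity in the real-analytic case) to conclude that no eigenfunction can vanish on a set of positive Lebesgue measure, hence the infimum cannot be realized or approached at bounded frequency, and the minimizing sequence must have $\lambda_k\to+\infty$. Once that is established, your Portmanteau argument goes through verbatim.
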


\begin{proof}
The first claim follows by the Portmanteau theorem (see Appendix \ref{app:Portmanteau}) and by definition of QLs.

The second claim follows by using Lemma \ref{lemBorel} and Remark \ref{rem_openclosed}.

Let us prove the third item.
If $g_1(\omega)=0$ for some measurable subset $\omega$ of positive Lebesgue measure, then either $\int_\omega\phi^2\, dx_g=0$ for some $\phi\in\mathcal{E}$ or $\liminf_{\lambda\rightarrow+\infty}\int_\omega\phi_\lambda^2\, dx_g=0$. The first possibility cannot occur: it cannot happen that $\int_\omega\phi^2\, dx_g=0$ because this would imply that $\phi=0$ on a subset $\omega$ of positive Lebesgue measure and thus of Hausdorff dimension $n$, which is impossible by results of \cite{DonnellyFefferman,HardtSimon,Lin} (this impossibility is obvious when the manifold $M$ is analytic  because then $\phi$ is analytic). Therefore $\liminf_{\lambda\rightarrow+\infty}\int_\omega\phi_\lambda^2\, dx_g=0$. Up to some subsequence, there exists $\nu\in\QL(M)$ that is the weak limit of $\phi_\lambda^2\, dx_g$. Now, by the Portmanteau theorem (see Appendix \ref{app:Portmanteau}), since $\omega$ is open we have $\nu(\omega)\leq \liminf_{\lambda\rightarrow+\infty}\int_\omega\phi_\lambda^2\, dx_g$, and thus $\nu(\omega)=0$. The claim follows.
\end{proof}

\begin{lemma}\label{lemg2g2prime}
Let $\omega$ be an open subset of $M$. If $g_2(\omega)=0$ then $g_2'(\omega)=g_1''(\omega)=0$.
\end{lemma}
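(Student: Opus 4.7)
The chain $g_2'(\omega)\leq g_1''(\omega)$ is already available from Lemma \ref{leminegalites}, so the whole statement reduces to proving $g_1''(\omega)=0$, i.e., to exhibiting a probability measure $\mu\in\mathcal{I}(S^*M)$ with $\mu(\pi^{-1}(\omega))=0$. The plan is to build such a $\mu$ by a Krylov--Bogolyubov averaging argument along trajectories that spend no time in $\omega$.

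First, I would reduce the hypothesis to something usable on every finite window. By Remark \ref{rem_g_2_sup} we have $g_2^T(\omega)\leq g_2(\omega)=0$ for every $T>0$, while $g_2^T(\omega)\geq 0$ since $\chi_\omega\geq 0$. Hence $g_2^T(\omega)=0$ for every $T>0$. Since $\omega$ is open, Lemma \ref{lemg2open} tells us that the infimum defining $g_2^T(\omega)$ is attained: there exists $z_T\in S^*M$ with $\gamma_T(t)=\pi\circ\varphi_t(z_T)$ satisfying
$$
\frac{1}{T}\int_0^T \chi_\omega(\pi\circ\varphi_t(z_T))\,dt \;=\; 0,
$$
so the trajectory $t\mapsto\varphi_t(z_T)$ spends zero Lebesgue-time in $\pi^{-1}(\omega)$ on $[0,T]$.

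Next, I would run the classical Krylov--Bogolyubov construction: set
$$
\mu_T \;=\; \frac{1}{T}\int_0^T (\varphi_t)_*\delta_{z_T}\,dt,
$$
a probability Radon measure on the compact space $S^*M$. By weak compactness of probability measures on a compact metric space, some subsequence $\mu_{T_n}$ with $T_n\to+\infty$ converges weakly to a probability Radon measure $\mu$. For any $a\in C^0(S^*M)$ and $s\in\R$, the telescoping identity
$$
\int a\circ\varphi_s\, d\mu_T - \int a\, d\mu_T \;=\; \frac{1}{T}\left(\int_T^{T+s}a\circ\varphi_t(z_T)\,dt - \int_0^s a\circ\varphi_t(z_T)\,dt\right)
$$
is bounded by $2|s|\,\|a\|_\infty/T$, hence vanishes in the limit. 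Therefore $(\varphi_s)_*\mu=\mu$ for every $s$, i.e.\ $\mu\in\mathcal{I}(S^*M)$.

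It remains to evaluate $\mu$ on $\pi^{-1}(\omega)$. Since $\omega$ is open in $M$ and $\pi$ is continuous, $\pi^{-1}(\omega)$ is open in $S^*M$, so the open-set direction of the Portmanteau theorem (Appendix \ref{app:Portmanteau}) gives
$$
\mu\bigl(\pi^{-1}(\omega)\bigr) \;\leq\; \liminf_{n\to+\infty}\mu_{T_n}\bigl(\pi^{-1}(\omega)\bigr) \;=\; \liminf_{n\to+\infty}\frac{1}{T_n}\int_0^{T_n}\chi_\omega(\pi\circ\varphi_t(z_{T_n}))\,dt \;=\; 0.
$$
Thus $g_1''(\omega)\leq\mu(\pi^{-1}(\omega))=0$, and combined with the inequality $g_2'(\omega)\leq g_1''(\omega)$ this yields $g_2'(\omega)=g_1''(\omega)=0$. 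The only non-routine step is verifying that the weak limit still does not see the open set $\pi^{-1}(\omega)$, and this is precisely what the lower-semicontinuity half of Portmanteau provides; the rest is the standard invariant-measure construction applied to the witnesses supplied by Lemma \ref{lemg2open}.
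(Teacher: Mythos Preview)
Your proof is correct and takes a route that differs from the paper's in an interesting way. Both arguments start identically: from $g_2(\omega)=0$ and Remark~\ref{rem_g_2_sup} one gets $g_2^T(\omega)=0$ for every $T$, and then Lemma~\ref{lemg2open} supplies, for each $T$, a geodesic spending zero time in $\omega$ on $[0,T]$. At this point the two proofs diverge. The paper uses compactness of geodesics in $C^0$: it extracts a limit ray $\gamma$ and, exploiting that $M\setminus\omega$ is closed, shows $\overline{\gamma(\R)}\subset M\setminus\omega$; this immediately gives $g_2'(\omega)=0$, and then any invariant probability measure supported on the compact invariant set $\overline{\gamma(\R)}$ witnesses $g_1''(\omega)=0$. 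You instead use compactness of probability measures: you average along the witnesses $z_T$ via Krylov--Bogolyubov, pass to a weak limit $\mu\in\mathcal{I}(S^*M)$, and conclude $\mu(\pi^{-1}(\omega))=0$ directly from the open-set half of Portmanteau; the inequality $g_2'\leq g_1''$ from Lemma~\ref{leminegalites} then finishes the job. Your approach is slightly more economical (one compactness argument instead of two, and no separate treatment of $g_2'$), while the paper's yields the extra geometric information that there actually exists a single geodesic whose closure avoids $\omega$ entirely.
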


\begin{proof}
By Remark \ref{rem_g_2_sup}, we have $g_2^T(\omega)\leq g_2(\omega)$ and thus $g_2^T(\omega)=0$ for every $T>0$. Now, by Lemma \ref{lemg2open}, for every $k\in\N^*$ there exists $\gamma_k\in\Gamma$ such that $g_2^k(\omega)=\frac{1}{k}\int_0^k\chi_\omega(\gamma_k(t))\, dt=0$, hence $\chi_\omega(\gamma_k(t))=0$ (i.e., $\gamma_k(t)\in M\setminus\omega$) for almost every $t\in[0,k]$.
By compactness of geodesics, up to some subsequence $\gamma_k$ converges to a ray $\gamma\in\Gamma$ uniformly on any compact interval. 
Using the fact that $M\setminus\omega$ is closed, 
we infer that $\overline{\gamma(\R)}\subset M\setminus\omega$. Not only it follows that $g_2'(\omega)=0$, but also that $g_1''(\omega)=0$. To obtain the last statement, take an invariant probability measure $\nu$ on the compact set $\overline{\gamma(\R)}$ (there always exists at least one such measure) and consider the invariant probability measure $\nu_\gamma$ on $M$ defined by $\nu_\gamma(E)=\nu(E\cap\overline{\gamma(\R)})$ for every Borel set $E\subset M$.
\end{proof}

\subsection{Proof of Theorem \ref{mainthm}}

\begin{lemma}\label{lemZollg2}
If $M$ is Zoll then $g_2(a)=g_2'(a)=g_1''(a)$ for any bounded Borel measurable function $a$ on $S^*M$.

If moreover $\delta_\gamma\in\QL(M)$ for every periodic ray $\gamma\in\Gamma$ then $g_2(a)=g_2'(a)=g_1''(a)=g_1'(a)$ for any bounded Borel measurable function $a$ on $S^*M$.
\end{lemma}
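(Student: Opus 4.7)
The plan is to use the Zoll assumption to obtain an explicit formula for $g_2'(a)$ (and $g_2(a)$) as the infimum of $\delta_{\tilde\gamma}(a)$ over periodic geodesics $\tilde\gamma$ on $S^*M$, and then compare this with $g_1''$ and $g_1'$ using that each $\delta_{\tilde\gamma}$ is an invariant measure (resp.\ a quantum limit under the extra hypothesis). By Lemma \ref{leminegalites} it is enough to establish the \emph{reverse} inequalities $g_1''(a)\le g_2(a)$ (resp.\ $g_1'(a)\le g_2(a)$).

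First, by Wadsley's theorem (recalled in Section \ref{sec:knownresults}), Zollness gives a common period $T_0>0$ such that $\varphi_{T_0}=\mathrm{Id}$ on $S^*M$; hence for every $z\in S^*M$ the minimal period $T(z)$ divides $T_0$. This yields the key identity $\bar a_{T_0}(z)=\delta_{\tilde\gamma_z}(a)$ for every $z$, where $\tilde\gamma_z$ is the periodic orbit through $z$, because averaging over any integer multiple of the minimal period returns exactly the orbit average. A straightforward splitting $T=k\,T(z)+r$ with $0\le r<T(z)$ together with boundedness of $a$ shows $\bar a_T(z)\to \delta_{\tilde\gamma_z}(a)$ as $T\to+\infty$, so
\[
g_2'(a)=\inf_{z\in S^*M}\liminf_{T\to+\infty}\bar a_T(z)=\inf_{z\in S^*M}\delta_{\tilde\gamma_z}(a)=\inf_{\tilde\gamma\text{ periodic}}\delta_{\tilde\gamma}(a).
\]
Using Remark \ref{rem_g_2_sup} and the identity $\bar a_{T_0}(z)=\delta_{\tilde\gamma_z}(a)$ I get
\[
g_2(a)\ \ge\ g_2^{T_0}(a)=\inf_{z\in S^*M}\bar a_{T_0}(z)=\inf_{\tilde\gamma}\delta_{\tilde\gamma}(a)=g_2'(a),
\]
which together with $g_2(a)\le g_2'(a)$ (Lemma \ref{leminegalites}) yields $g_2(a)=g_2'(a)$.

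For the comparison with $g_1''$, each $\delta_{\tilde\gamma}$ is a probability measure on $S^*M$ invariant under the geodesic flow, so $\delta_{\tilde\gamma}\in\I(S^*M)$; taking the infimum over periodic $\tilde\gamma$ gives $g_1''(a)\le\inf_{\tilde\gamma}\delta_{\tilde\gamma}(a)=g_2'(a)$. Since the reverse $g_2'(a)\le g_1''(a)$ is already in Lemma \ref{leminegalites}, we obtain $g_2(a)=g_2'(a)=g_1''(a)$, which is the first assertion.

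For the second assertion, the additional hypothesis says that $\delta_\gamma\in\QL(M)$ for every periodic ray $\gamma\in\Gamma$; as recalled in Section \ref{sec:mainresults} (using $\QL(S^*M)\subset\I(S^*M)$ and Proposition \ref{propQLM}), this is equivalent to $\delta_{\tilde\gamma}\in\QL(S^*M)$ for every periodic geodesic $\tilde\gamma$. Therefore
\[
g_1'(a)=\inf_{\mu\in\QL(S^*M)}\mu(a)\ \le\ \inf_{\tilde\gamma\text{ periodic}}\delta_{\tilde\gamma}(a)=g_2'(a),
\]
which combined with $g_1''(a)\le g_1'(a)$ (Lemma \ref{leminegalites}) and the already-proved $g_2'(a)=g_1''(a)$ forces $g_1'(a)=g_1''(a)=g_2'(a)=g_2(a)$.

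I do not foresee any real obstacle: the whole argument reduces to the Zoll identity $\bar a_{T_0}=\delta_{\tilde\gamma}$ and chasing inequalities, the only mildly delicate point being the equivalence between $\delta_\gamma\in\QL(M)$ and $\delta_{\tilde\gamma}\in\QL(S^*M)$ for periodic geodesics, which is granted by the material recalled before the statement of Theorem \ref{mainthm}.
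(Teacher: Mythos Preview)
Your proof is correct and follows essentially the same route as the paper: use Wadsley's common period $T_0$ to identify $g_2^{T_0}(a)=g_2(a)=g_2'(a)=\inf_{\tilde\gamma}\delta_{\tilde\gamma}(a)$, then exploit $\delta_{\tilde\gamma}\in\I(S^*M)$ (resp.\ $\delta_{\tilde\gamma}\in\QL(S^*M)$) to obtain the reverse inequalities to Lemma~\ref{leminegalites}. Your presentation is in fact slightly more explicit than the paper's (the splitting $T=kT(z)+r$ and the appeal to Proposition~\ref{propQLM} for the $\QL(M)$/$\QL(S^*M)$ equivalence), but the argument is the same.
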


\begin{proof}
Since $M$ is Zoll, all geodesics are periodic with a common period $T$ (by Wadsley's theorem) and hence, given a bounded Borel measurable function $a$ on $S^*M$, we have $\lim_{S\rightarrow+\infty}\frac{1}{S}\int_0^S a\circ\varphi_t\, dt = \frac{1}{T}\int_0^T a\circ\varphi_t\, dt$ and the limit is uniform on $S^*M$. Hence
$$
g_2(a)=g_2^T(a) = g_2'(a) = \inf_{z\in S^*M} \frac{1}{T}\int_0^{T} a\circ\varphi_t(z)\, dt = \inf_{\gamma\in\Gamma} \delta_\gamma(a) 
$$
for any bounded Borel measurable function $a$ on $S^*M$.
Since the Dirac $\delta_\gamma$ along any (periodic) geodesic is invariant, we have $g_1''(a) \leq \inf_{\gamma\in\Gamma} \delta_\gamma(a)$, and the equality $g_2(a)=g_2'(a)=g_1''(a)$ follows by Lemma \ref{leminegalites}.

If moreover the Dirac along any geodesic is a QL then $g_1'(a) \leq \inf_{\gamma\in\Gamma} \delta_\gamma(a)$, and the equality $g_2(a)=g_2'(a)=g_1''(a)=g_1'(a)$ follows by Lemma \ref{leminegalites} as well.
\end{proof}

\begin{lemma}
If $g_2'(\omega)=g_1''(\omega)$ for any Borel measurable subset $\omega$ of $M$ then $M$ is Zoll.

If $g_2'(\omega)=g_1''(\omega)=g_1'(\omega)$ for any Borel measurable subset $\omega$ of $M$ then $M$ is Zoll and $\delta_\gamma\in\QL(M)$ for every periodic ray $\gamma\in\Gamma$.
\end{lemma}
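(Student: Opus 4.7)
The two assertions are converses to Lemma~\ref{lemZollg2}, which I prove by contraposition. For the \emph{first assertion}: suppose $M$ is not Zoll, so that by Wadsley's theorem combined with Theorem~\ref{thm_Guillemin} there exists $z_0 \in S^*M$ whose orbit $\tilde\gamma_0(t) := \varphi_t(z_0)$ fails to be periodic. Write $\gamma_0 = \pi \circ \tilde\gamma_0$ and let $\tilde K := \overline{\tilde\gamma_0(\R)}$, a compact $\varphi_t$-invariant subset of $S^*M$ strictly containing the orbit. The Krylov--Bogolyubov theorem applied to the empirical averages $\nu_T^0 = \frac{1}{T}\int_0^T \delta_{\varphi_t(z_0)}\,dt$ produces, along a subsequence $T_n \to +\infty$, a weak limit $\mu_0 \in \I(S^*M)$ supported in $\tilde K$; by non-periodicity of $\tilde\gamma_0$, $\mu_0$ is not a Dirac along a periodic orbit. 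The aim is then to produce a Borel set $\omega \subset M$ witnessing $g_2'(\omega) < g_1''(\omega)$, contradicting the hypothesis.

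The construction of $\omega$ is the heart of the argument. One picks an accumulation point $z_1 \in \tilde K \setminus \tilde\gamma_0(\R)$ (present because the orbit of $z_0$ is not closed) and sets $\omega := \overline{B(\pi(z_1),\varepsilon)} \subset M$ for a small radius $\varepsilon > 0$. For $\varepsilon$ chosen appropriately, one ensures simultaneously that $\mu_0(\pi^{-1}(\omega)) > 0$ (because $\pi(z_1)$ lies in the projection of $\supp(\mu_0)$) and that $\liminf_{T \to +\infty} \bar{\chi}_\omega(z_0) = 0$ (using that the non-periodic orbit $\gamma_0$ visits small neighborhoods of $\pi(z_1)$ along a sparse sequence of times whose density vanishes). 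Provided one also locates $\omega$ so that every invariant measure assigning mass to the relevant part of $\tilde K$ gives positive mass to $\pi^{-1}(\omega)$ (e.g., by placing $\omega$ around a point of a minimally invariant compact subset of $\tilde K$), one concludes $g_1''(\omega) > 0 = g_2'(\omega)$, giving the desired strict inequality.

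For the \emph{second assertion}, the first assertion already forces $M$ to be Zoll, and then Lemma~\ref{lemZollg2} yields $g_1''(a) = \inf_{\gamma \in \Gamma} \delta_\gamma(a)$ for every bounded Borel $a$. The added hypothesis $g_1'(\omega) = g_1''(\omega)$ for every Borel $\omega \subset M$ then forces the extremal measure $\delta_\gamma$ (for any periodic ray $\gamma \in \Gamma$) to lie in $\QL(M)$: suppose by contradiction $\delta_{\gamma^*} \notin \QL(M)$ for some periodic $\gamma^*$; since $\QL(M)$ is weakly closed and convex, a Hahn--Banach / Portmanteau separation argument (see Appendix~\ref{app:Portmanteau}) provides a closed $\omega \subset M$ with $\delta_{\gamma^*}(\omega) < \inf_{\nu \in \QL(M)} \nu(\omega) = g_1'(\omega)$. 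Combined with $g_1'(\omega) = g_1''(\omega) \leq \delta_{\gamma^*}(\omega)$ (which holds because $\delta_{\gamma^*}$ is an invariant measure), this is a contradiction.

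\textbf{Main obstacle.} The delicate step is the construction of $\omega$ in the first assertion when the orbit of $z_0$ happens to be dense in $\tilde K$, preventing any straightforward geometric separation of the accumulation point $z_1$ from the orbit itself. Overcoming this relies on the fluctuation of empirical averages along non-periodic orbits: along distinct subsequences $T_n \to +\infty$ the measures $\nu_{T_n}^0$ may converge to distinct invariant measures, and the indicator function of a carefully placed closed set can detect this fluctuation through a limsup--liminf gap. Calibrating $\varepsilon$ and the subsequence simultaneously so that $\liminf_T \bar\chi_\omega(z_0)$ falls strictly below the $\mu_0$-mass of $\pi^{-1}(\omega)$ --- while no competing invariant measure undercuts this bound --- is the technical core.
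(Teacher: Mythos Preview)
Your approach to the first assertion has a genuine flaw. You aim to produce a small closed ball $\omega$ around a point $\pi(z_1)$ with $g_1''(\omega)>0$ and $g_2'(\omega)=0$. But $g_1''(\omega)=\inf_{\mu\in\I(S^*M)}\mu(\pi^{-1}(\omega))$ is an infimum over \emph{all} invariant probability measures on $S^*M$, not merely those supported on your set $\tilde K$. On any closed Riemannian manifold there exist closed geodesics (Lyusternik--Fet), and the Dirac along a closed geodesic disjoint from $\omega$ already forces $g_1''(\omega)=0$. Your parenthetical about placing $\omega$ around a point of a minimally invariant subset of $\tilde K$ does not address this: there is no reason every invariant measure on $S^*M$ should see $\pi^{-1}(\omega)$. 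Separately, the claim that $\liminf_T \bar\chi_\omega(z_0)=0$ is unproved (and, as your ``Main obstacle'' paragraph concedes, may fail when the orbit is dense in $\tilde K$).

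The paper's argument bypasses all of this by choosing $\omega$ \emph{large} rather than small: for an arbitrary ray $\gamma$, set $\omega=M\setminus\gamma(\R)$ (Borel, since $\gamma(\R)$ is $F_\sigma$). Then $g_2'(\omega)=0$ trivially (the ray $\gamma$ itself never enters $\omega$), so the hypothesis gives $g_1''(\omega)=0$, producing an invariant measure $\nu$ concentrated on $\gamma(\R)$; invariance and finiteness of $\nu$ then force $\gamma$ to be periodic and $\nu=\delta_\gamma$. For the second assertion the \emph{same} set $\omega$ works: the extra hypothesis $g_1'(\omega)=0$ places $\nu$ in $\QL(M)$.

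Your Hahn--Banach route for the second assertion also has gaps: $\QL(M)$ is compact but there is no reason for it to be convex, so geometric separation is not available; and even granting a separating continuous function, you do not explain how to pass to a separating Borel \emph{set} (the functionals $g_1'$, $g_1''$ are infima, not linear).
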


\begin{proof}
Let $\gamma$ be a ray. The subset $\omega = M\setminus\gamma(\R)$ of $M$ is Borel measurable because $\gamma(\R) = \cup_{n\in\N^*}\gamma([-n,n])$ is a countable union of closed sets. Obviously, we have $g'_2(\omega)=0$ and hence, using the assumption, $g_1''(\omega)=0$. This means that there exists an invariant measure $\nu$ such that $\nu(M\setminus\gamma(\R))=0$. Hence $\nu$ is concentrated on $\gamma(\R)$ and $\nu(\gamma(\R))=1$. It follows that $\nu(\gamma([-n,n]))\rightarrow 1$ as $n\rightarrow+\infty$.
By invariance we get that $\gamma$ must be periodic and that $\nu=\delta_\gamma$.

If moreover $g_2'(\omega)=g_1''(\omega)=g_1'(\omega)$, then we have $g_1'(\omega)=0$ and actually $\nu\in\QL(M)$ in the reasoning above. The lemma follows (see also Proposition \ref{propQLM} in Appendix \ref{app:measures}).
\end{proof}

\begin{lemma}\label{lem9zoll}
If $g_2(\omega)=g_2'(\omega)$ for any Borel measurable subset $\omega$ of $M$ then $M$ is Zoll.
\end{lemma}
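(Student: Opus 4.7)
The plan is to argue by contradiction. Assume that $g_2(\omega)=g_2'(\omega)$ holds for every Borel subset $\omega\subset M$ and suppose that $M$ is not Zoll; by Theorem~\ref{thm_Guillemin} there exists a non-periodic geodesic ray $\gamma_0\in\Gamma$. My goal is to extract from this assumption enough information to force $\gamma_0$ to be periodic, yielding a contradiction via Wadsley's theorem.

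The natural candidate, inspired by the proof of the preceding lemma, is the set $\omega=M\setminus\gamma_0(\R)$, which is Borel because $\gamma_0(\R)=\bigcup_{n\in\N^*}\gamma_0([-n,n])$ is a countable union of compact arcs. Evaluating along $\gamma_0$ itself immediately gives $g_2'(\omega)=0$, and by the hypothesis $g_2(\omega)=0$ as well. By Remark~\ref{rem_g_2_sup}, this is equivalent to $g_2^T(\omega)=0$ for every $T>0$. For each $T$ I would select a minimizing sequence of rays $(\gamma_n^T)_n\subset\Gamma$ along which $\frac{1}{T}\int_0^T\chi_\omega(\gamma_n^T(t))\,dt\to 0$, so that $\gamma_n^T$ spends almost all of $[0,T]$ inside $\gamma_0(\R)$. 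By compactness of $S^*M$ the initial covectors converge up to extraction, and the limit ray $\gamma^{*,T}$ satisfies $\gamma^{*,T}([0,T])\subset\overline{\gamma_0(\R)}$ (limits of points in $\gamma_0(\R)$ lie in its closure). A diagonal extraction in $T\to+\infty$ then produces a ray $\gamma^{*}\in\Gamma$ whose full trace lies in $\overline{\gamma_0(\R)}$, and applying the Krylov--Bogolyubov procedure to its lift $\tilde\gamma^{*}$ yields an invariant probability measure $\nu\in\I(S^*M)$ with $\nu(\pi^{-1}(\overline{\gamma_0(\R)}))=1$.

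The main obstacle is the gap between \emph{closure} and \emph{orbit}: the invariant measure $\nu$ obtained above is supported in $\pi^{-1}(\overline{\gamma_0(\R)})$, whereas to rerun the argument of the preceding lemma and conclude that $\gamma_0$ is periodic I would need $\nu$ supported in $\pi^{-1}(\gamma_0(\R))$ itself. When $\gamma_0(\R)$ is already closed in the compact manifold $M$ this is immediate, since a closed embedded geodesic trace in a compact manifold must come from a periodic orbit; so the serious case is $\overline{\gamma_0(\R)}\supsetneq\gamma_0(\R)$, corresponding to a non-periodic orbit accumulating on itself or on a larger invariant set. I expect to close this gap by iterating the construction inside a minimal closed invariant subset $K\subset\overline{\tilde\gamma_0(\R)}$ of $S^*M$: such a minimal $K$ is the closure of the orbit of each of its points $z^{*}$, so reapplying the hypothesis to $\omega'=M\setminus\pi(\varphi_\cdot(z^{*}))(\R)$, combined with minimality of $K$ and the newly produced invariant measure, should force $K$ to consist of a single periodic orbit, which after unwinding forces $\gamma_0$ itself to be periodic.
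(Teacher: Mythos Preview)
Your test set $\omega=M\setminus\gamma_0(\R)$ makes the hypothesis vacuous. Evaluating along $\gamma_0$ gives $g_2'(\omega)=0$, but since $g_2\leq g_2'$ always holds (Lemma~\ref{leminegalites}), you already have $g_2(\omega)=0$ without invoking the hypothesis at all. Everything that follows---extracting limit rays from $g_2^T(\omega)=0$, producing an invariant measure by Krylov--Bogolyubov, passing to a minimal invariant subset---can be carried out directly from the existence of $\gamma_0$ and never uses $g_2=g_2'$. Indeed the minimizing ray for each $g_2^T(\omega)$ is $\gamma_0$ itself, so the ``limit ray'' you construct is just $\gamma_0$ again, and Krylov--Bogolyubov along any orbit always yields an invariant measure on its closure. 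Your iteration on a minimal set $K$ hits the same wall: with $\omega'=M\setminus\pi(\varphi_\cdot(z^*))(\R)$ you again get $g_2'(\omega')=0$ for free and the hypothesis contributes nothing. There is no contradiction to extract, because for this $\omega$ the equation $g_2(\omega)=g_2'(\omega)$ is true on any manifold.

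The paper's proof goes in the opposite direction: it constructs, from a non-periodic ray $\gamma$, a Borel set on which $g_2$ and $g_2'$ \emph{differ}, directly contradicting the hypothesis. The trick is to remove from $M$ not all of $\gamma(\R)$ but only sparse arcs $\gamma([2^k,2^k+k])$: these arcs are long enough (length $k\to\infty$) to force $g_2^k(\omega)=0$ along shifted copies of $\gamma$, hence $g_2(\omega)=0$; yet their total length up to time $2^k$ is $\mathrm{O}(k^2)$, so their density along $\gamma$ vanishes and $\liminf_{T\to\infty}\frac{1}{T}\int_0^T\chi_\omega(\gamma(t))\,dt=1$, giving $g_2'(\omega)>0$. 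This separation of the two functionals is the missing idea.
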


\begin{proof}
Assume that $M$ is not Zoll and take a non-periodic ray $\gamma$.
We consider the Borel measurable set $\omega = M\setminus \cup_{k\in\N^*} \gamma([2^k,2^k+k])$. Then we have $g_2^{k}(\omega)=0$ (take the ray $\gamma$ restricted to the time interval $[2^k,2^k+k]$ of length $k$) and thus $g_2(\omega)=0$ while $g_2'(\omega)=1$. The latter equality is because 
$$
\lim_{k\rightarrow+\infty} \frac{1}{2^k}\int_0^{2^k} \chi_\omega(\gamma(t))\, dt = \lim_{k\rightarrow+\infty} \left( 1-\frac{1}{2^k}\sum_{j=1}^{k-1} j \right) = 1 .
$$
The lemma is proved.
\end{proof}

\begin{lemma}\label{lem_g2nondecreasing}
Given any $T>0$ and any bounded Borel measurable function $a$ on $S^*M$, the sequence $(g_2^{2^k T}(a))_{k\in\N^*}$ is nondecreasing (and converges to $g_2(a)$ as $T\rightarrow+\infty$).
\end{lemma}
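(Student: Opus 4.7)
The plan is to establish, for any $S>0$, the single-step inequality $g_2^S(a)\leq g_2^{2S}(a)$; applied iteratively with $S=2^kT$, this yields monotonicity of the sequence $(g_2^{2^kT}(a))_{k\in\N^*}$. The convergence statement then follows immediately from Remark \ref{rem_g_2_sup}, which gives $g_2(a)=\sup_{T>0}g_2^T(a)=\lim_{T\to+\infty}g_2^T(a)$, hence in particular $g_2^{2^kT}(a)\to g_2(a)$ as $k\to+\infty$.

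To prove the single-step inequality, let $z\in S^*M$ be arbitrary and split
$$
\bar a_{2S}(z)=\frac{1}{2S}\int_0^{2S}a\circ\varphi_t(z)\,dt=\frac{1}{2}\bar a_S(z)+\frac{1}{2}\cdot\frac{1}{S}\int_S^{2S}a\circ\varphi_t(z)\,dt.
$$
The first summand satisfies $\frac{1}{2}\bar a_S(z)\geq \frac{1}{2}g_2^S(a)$ by definition of $g_2^S$. For the second summand, I would use the group property of the geodesic flow $\varphi_{s+S}=\varphi_s\circ\varphi_S$, set $w=\varphi_S(z)\in S^*M$, and change variables $t=s+S$ to obtain
$$
\frac{1}{S}\int_S^{2S}a\circ\varphi_t(z)\,dt=\frac{1}{S}\int_0^S a\circ\varphi_s(w)\,ds=\bar a_S(w)\geq g_2^S(a).
$$
Adding the two bounds gives $\bar a_{2S}(z)\geq g_2^S(a)$; since $z$ was arbitrary, taking infimum over $S^*M$ yields $g_2^{2S}(a)\geq g_2^S(a)$, as wanted.

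I do not anticipate any real obstacle: this is essentially the doubling instance of the superadditivity argument (Fekete's lemma) already invoked in Remark \ref{rem_g_2_sup} to establish $g_2(a)=\sup_{T>0}g_2^T(a)$. The only point worth verifying carefully is the flow-equivariance step, which relies on nothing more than $\varphi_{s+S}=\varphi_s\circ\varphi_S$ and the fact that the infimum in the definition of $g_2^S(a)$ is taken over all of $S^*M$, so that $w=\varphi_S(z)$ is an admissible competitor.
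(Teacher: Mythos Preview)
Your proof is correct and is essentially identical to the paper's: both split the integral over $[0,2S]$ into the two halves $[0,S]$ and $[S,2S]$, use the flow property $\varphi_{s+S}=\varphi_s\circ\varphi_S$ to recognize the second half as $\bar a_S\circ\varphi_S$, and then bound each half below by $g_2^S(a)$. The convergence claim is handled exactly as you say, via Remark~\ref{rem_g_2_sup}.
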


\begin{proof}
Given any $\delta>1$, we have
\begin{multline*}
g_2^{\delta T}(a) = \inf \frac{1}{\delta T}\int_0^{\delta T} a\circ\varphi_t\, dt
= \inf \frac{1}{\delta T} \left( \int_0^{\frac{\delta}{2}T} a\circ\varphi_t\, dt + \int_{\frac{\delta}{2}T}^{\delta T} a\circ\varphi_t\, dt \right) \\
= \inf \frac{1}{\delta T} \left( \int_0^{\frac{\delta}{2}T} a\circ\varphi_t\, dt + \int_0^{\frac{\delta}{2}T} a\circ\varphi_t\, dt\, \circ\varphi_{\frac{\delta}{2}T} \right)
=\inf\left( \frac{1}{2} \bar a_{\frac{\delta}{2}T} + \frac{1}{2} \bar a_{\frac{\delta}{2}T}\circ\varphi_{\frac{\delta}{2}T} \right)\\
\geq \frac{1}{2} g_2^{\frac{\delta}{2}T}(a) + \frac{1}{2} g_2^{\frac{\delta}{2}T}(a) = g_2^{\frac{\delta}{2}T}(a) .
\end{multline*}
The lemma follows.
\end{proof}

\begin{lemma}\label{lemg2T}
$M$ is Zoll if and only if there exists $T>0$ such that $g_2(\omega)=g_2^T(\omega)$ for every open subset $\omega\subset M$, if and only if there exists $T>0$ such that $g_2(\omega)=g_2^T(\omega)$ for every closed subset $\omega\subset M$. Moreover the smallest period of geodesics is the smallest $T$ such that $g_2(\omega)=g_2^T(\omega)$ for every open (or closed) subset $\omega\subset M$.
\end{lemma}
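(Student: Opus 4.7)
The forward direction is immediate from Lemma \ref{lemZollg2}: if $M$ is Zoll, let $T_0$ be the smallest common period of geodesics (provided by Wadsley's theorem); that lemma then gives $g_2(a) = g_2^{T_0}(a)$ for every bounded Borel function $a$ on $S^*M$, and applied to $a = \chi_\omega$ this yields $g_2^{T_0}(\omega) = g_2(\omega)$ for every $\omega \subset M$, in particular for open or closed $\omega$. So $T = T_0$ works for both characterizations.

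For the converse, I would assume there exists $T > 0$ with $g_2^T(\omega) = g_2(\omega)$ for every open $\omega$ (the closed case is analogous). Remark \ref{rem_g_2_sup} combined with the superadditivity of $T \mapsto T g_2^T$ used in Lemma \ref{lem_g2nondecreasing} yields $g_2^{nT}(\omega) = g_2^T(\omega)$ for every $n \in \N^*$. Writing
\[
g_2^{nT}(\omega) = \inf_{z \in S^*M} \frac{1}{n}\sum_{j=0}^{n-1} \bar\chi_{\omega, T}(\varphi_{jT}(z))
\]
with each summand bounded below by $g_2^T(\omega)$, and combining Lemma \ref{lemg2open} (the infimum $g_2^{nT}(\omega)$ is attained for open $\omega$) with the lower semicontinuity of $\bar\chi_{\omega, T}$ for open $\omega$, the compactness of $S^*M$, and a diagonal extraction as $n \to \infty$, I would produce $z^* \in S^*M$ such that $\bar\chi_{\omega, T}(\varphi_{jT}(z^*)) = g_2^T(\omega)$ for every $j \in \N$. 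It follows that $\bar\chi_{\omega, nT}(z^*) = g_2^T(\omega)$ for all $n$, so $g_2'(\omega) \leq g_2^T(\omega) = g_2(\omega)$; combined with Lemma \ref{leminegalites} this gives $g_2(\omega) = g_2'(\omega)$ for every open $\omega$.

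To conclude that $M$ is Zoll I would adapt the construction used in the proof of Lemma \ref{lem9zoll}. Assuming by contradiction that $M$ is not Zoll, pick a non-periodic ray $\gamma^*$, take small open tubular neighborhoods $V_k \subset M$ of $\gamma^*([2^k, 2^k + k])$ with rapidly shrinking widths, and set $\omega = M \setminus \overline{\bigcup_k V_k}$ (open). Each shift $\gamma^*(\cdot + 2^k)$ witnesses $g_2^k(\omega) = 0$, hence $g_2(\omega) = 0$, while a length-diameter-and-recurrence analysis on the $V_k$'s should give $g_2'(\omega) > 0$, contradicting the equality just established. The smallest valid $T$ is then $T_{\min}$: the forward direction gives sufficiency, and for any $T' < T_{\min}$ the open complement of a tubular neighborhood of a length-$T'$ subarc of a period-$T_{\min}$ geodesic yields $g_2^{T'}(\omega) = 0 < g_2(\omega)$. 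The main obstacle is precisely this last construction: the $V_k$'s must be small enough that $\gamma^*$ has asymptotic density $1$ in $\omega$ (so $g_2'(\omega) > 0$) yet structured so that no other geodesic becomes asymptotically trapped in $\bigcup_k V_k$, a delicate geometric balance controlling Poincar\'e recurrence times in thin tubes.
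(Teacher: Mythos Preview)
Your forward direction is fine, and your intermediate step establishing $g_2(\omega)=g_2'(\omega)$ for all open $\omega$ (via attained minimizers and a diagonal extraction) is correct, though it turns out to be unnecessary.

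The genuine gap is in your final contradiction. You want an \emph{open} $\omega$ with $g_2(\omega)=0$ but $g_2'(\omega)>0$. This is impossible on \emph{any} manifold: Lemma~\ref{lemg2g2prime} in the paper shows precisely that for open $\omega$, $g_2(\omega)=0$ forces $g_2'(\omega)=0$. So no choice of tubular neighborhoods $V_k$ can ever produce the configuration you need, and the ``delicate geometric balance'' you worry about is not merely delicate but unattainable. (Lemma~\ref{crucial_lemma_extended}, which does realize $g_2=0$ and $g_2'\geq 1-\varepsilon$ simultaneously, does so only for \emph{closed} $\omega$.)

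The paper's route is much more direct and avoids this trap entirely. Given the hypothesis, Lemma~\ref{lem_g2nondecreasing} gives $g_2^T(\omega)=g_2^{2^kT}(\omega)$ for every open $\omega$ and every $k$. Now fix an arbitrary ray $\gamma$ and take the specific open set $\omega=M\setminus\gamma([0,T])$. Trivially $g_2^T(\omega)=0$, hence $g_2^{2^kT}(\omega)=0$; by Lemma~\ref{lemg2open} the infimum is attained, so some ray stays in $\gamma([0,T])$ for time $2^kT$, which (since geodesic arcs in a one-dimensional image must coincide with $\gamma$ up to shift) forces $\gamma$ itself to be periodic. No contradiction argument, no tube construction. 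For the closed case the paper replaces $\omega$ by the closed sets $\omega_k=\{d(\cdot,\gamma([0,T]))\geq 1/k\}$ and passes to a limit ray, so ``analogous'' understates the extra work needed there.
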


\begin{proof}
If $M$ is Zoll with period $T$ then $g_2(\omega)=g_2^T(\omega)(=g_2'(\omega))$ for any $\omega$: this has been proved in Lemma \ref{lemZollg2}.

Conversely, if $g_2(\omega)=g_2^T(\omega)$ for some $T>0$ for every $\omega$ open, then, since by Lemma \ref{lem_g2nondecreasing} the sequence $(g_2^{2^k T}(\omega))_{k\in\N^*}$ is nondecreasing, we get $g_2^T(\omega)=g_2^{2^k T}(\omega)$ for any $\omega$ open and any $k$. Fix a ray $\gamma$ and take $\omega=M\setminus\gamma([0,T])$. Then $g_2^T(\omega)=0$, hence $g_2^{2^k T}(\omega)=0=\inf_{\gamma'\in\Gamma}\frac{1}{2^kT}\int_0^{2^kT}\chi_\omega(\gamma'(t))\, dt = \frac{1}{2^kT}\int_0^{2^kT}\chi_\omega(\gamma(t))\, dt$ (because this infimum is clearly reached for the ray $\gamma$) and thus $\gamma(t)\in\gamma([0,T])$ for any $t$. This implies that $\gamma$ is periodic.

Now, if the equality is valid on closed subsets, take $\omega_k = \{ x\in M\ \mid\ d(x,\gamma([0,T]))\geq \frac{1}{k}Ê\}$. Reasoning as above, we get $\inf_{\gamma'\in\Gamma}\frac{1}{2^kT}\int_0^{2^kT}\chi_{\omega_k}(\gamma'(t))\, dt = 0$, hence by compactness of rays there exists $\gamma_k\in\Gamma$ such that $\chi_{\omega_k}(\gamma_k(t))=0$ for every $t\in[0,2^kT]$, i.e., $d(\gamma_k(t),\gamma([0,T]))\leq\frac{1}{k}$ for every $t\in[0,2^kT]$. Passing to the limit, we get a limit ray $\gamma_\infty\in\Gamma$ such that $\gamma_\infty(\R)\subset\gamma([0,T])$. This implies that $\gamma_\infty=\gamma$ is periodic.
\end{proof}

\begin{lemma}\label{lem12Zoll}
$M$ is Zoll and $\delta_\gamma\in\QL(M)$ for every periodic ray $\gamma\in\Gamma$ if and only if there exists $T>0$ such that $g_1(\omega) \leq g_2^T(\omega)$ for any $\omega\subset M$ closed.
\end{lemma}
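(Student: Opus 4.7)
The plan is to prove each direction by leveraging the machinery built up in Lemmas \ref{lemBorel}--\ref{lemg2g2prime} together with the inclusion $\QL(S^*M)\subset\I(S^*M)$ and the Portmanteau theorem.

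\textbf{Forward direction.} I would first assume $M$ is Zoll with smallest common period $T_0$ and $\delta_\gamma\in\QL(M)$ for every periodic ray. By Wadsley's theorem every geodesic is $T_0$-periodic (possibly with period dividing $T_0$), so $g_2^{T_0}(\omega)=\inf_{\gamma\in\Gamma}\delta_\gamma(\omega)$ for any $\omega\subset M$. Fix $\omega$ closed and a ray $\gamma$. By assumption $\delta_\gamma$ is a weak limit of probability measures $\phi_{\lambda_j}^2\,dx_g$, so the Portmanteau theorem yields $\delta_\gamma(\omega)\ge\limsup_j\int_\omega\phi_{\lambda_j}^2\,dx_g\ge g_1(\omega)$. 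Taking the infimum over $\gamma$ gives $g_2^{T_0}(\omega)\ge g_1(\omega)$, which is the desired inequality with $T=T_0$.

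\textbf{Backward direction.} For the converse, fix $T>0$ satisfying $g_1(\omega)\le g_2^T(\omega)$ on closed sets, and let $\gamma\in\Gamma$ be arbitrary. For each $k\in\N^*$ set $\omega_k=M\setminus B(\gamma([0,T]),1/k)$, which is closed; then $g_2^T(\omega_k)=0$ (witnessed by $\gamma$), so by hypothesis $g_1(\omega_k)=0$. Pick eigenfunctions with $\int_{\omega_k}\phi_{\lambda_j}^2\,dx_g\to 0$ and extract a weakly convergent subsequence; the Portmanteau theorem applied to the closed set $\overline{B(\gamma([0,T]),1/k)}$ yields a QL $\nu_k\in\QL(M)$ supported in that set. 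A further extraction $\nu_k\rightharpoonup\nu\in\QL(M)$ produces a QL whose support is contained in the compact curve $\gamma([0,T])$. By \eqref{eq_QLbase} one finds $\tilde\nu\in\QL(S^*M)\subset\I(S^*M)$ with $\pi_*\tilde\nu=\nu$, so $\mathrm{supp}(\tilde\nu)\subset\pi^{-1}(\gamma([0,T]))$. Since $\mathrm{supp}(\tilde\nu)$ is flow-invariant, for any $z\in\mathrm{supp}(\tilde\nu)$ the projected curve $t\mapsto\pi\circ\varphi_t(z)$ is a geodesic of $M$ trapped in the one-dimensional set $\gamma([0,T])$; by uniqueness of geodesics from initial data it must be a reparametrization of $\gamma$, and the confinement of its full $\R$-image inside $\gamma([0,T])$ forces $\gamma$ itself to be periodic with period at most $T$. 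Since $\gamma$ was arbitrary, $M$ is Zoll and $T$ is a common period. Rerunning the construction with any periodic $\gamma$, $\mathrm{supp}(\tilde\nu)$ now lies in $\pi^{-1}(\gamma(\R))$, and the same confinement analysis shows it is a convex combination of the Diracs along the two periodic lifts $\tilde\gamma^\pm$ of $\gamma$; both project to $\delta_\gamma$, hence $\nu=\delta_\gamma\in\QL(M)$.

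\textbf{Smallest period and main obstacle.} The backward argument shows that any admissible $T$ is a common period, so $T\ge T_0$, and the forward argument with $T=T_0$ shows $T_0$ itself works; therefore the smallest admissible $T$ equals the smallest common period $T_0$. The delicate step is the \emph{confinement argument}: passing from a flow-invariant measure $\tilde\nu$ supported in the sphere bundle $\pi^{-1}(\gamma([0,T]))$ to periodicity of $\gamma$ and the identification $\nu=\delta_\gamma$ requires exploiting the one-dimensionality of the projected curve together with the uniqueness theorem for geodesics, in order to reduce $\tilde\nu$ to a combination of Diracs along the two periodic lifts of $\gamma$.
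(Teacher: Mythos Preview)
Your proof is correct and follows essentially the same strategy as the paper: in the forward direction both use Portmanteau to show $g_1(\omega)\le\delta_\gamma(\omega)=g_2^T(\omega)$ for closed $\omega$; in the backward direction both produce a QL concentrated on $\gamma([0,T])$ and then invoke the invariance of the lifted measure on $S^*M$ to force periodicity of $\gamma$ and identify $\nu=\delta_\gamma$.

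The packaging differs slightly. The paper routes the backward step through the functional $g_1'$: it shows $g_1(F_k)=0\Rightarrow g_1(O_k)=0\Rightarrow g_1'(O_k)=0$ via Lemma~\ref{lem_ineq_g1} (this is where the nodal-set fact that an eigenfunction cannot vanish on a set of positive measure enters), then uses Lemma~\ref{lemopen} to pass to the limit and obtain $g_1'(M\setminus\gamma([0,T]))=0$, and finally defers the confinement step to Proposition~\ref{propQLM}. You instead extract concrete QLs $\nu_k$ from minimizing eigenfunctions and take a further weak limit. This is fine, but note that your sentence ``pick eigenfunctions with $\int_{\omega_k}\phi_{\lambda_j}^2\to 0$ and extract a weakly convergent subsequence'' tacitly uses that the minimizing sequence has $\lambda_j\to\infty$ (otherwise the limit need not be a QL); this is exactly the nodal-set input hidden in Lemma~\ref{lem_ineq_g1}, and it should be cited at that point. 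Your inline confinement argument is a correct sketch of what the paper proves in Proposition~\ref{propQLM} and Lemma~\ref{lemraynul}; the cleanest way to finish it is to observe that a finite invariant measure on a non-periodic orbit would have to be a multiple of arclength and hence vanish, forcing the contradiction.
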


\begin{proof}
Assume that $M$ is Zoll and that $\delta_\gamma\in\QL(M)$ for every periodic ray $\gamma\in\Gamma$. Since all geodesics are periodic, they have the same period or a multiple of that period (Wadsley's Theorem, see \cite{Besse}), denoted by $T$. By Lemma \ref{lemZollg2} and Lemma \ref{lemg2T} we have $g_1'(\omega)=g_2^T(\omega)$ for any $\omega\subset M$ closed, and by Remark \ref{remg1g1'} we have $g_1(\omega)\leq g_1'(\omega)$, hence $g_1(\omega) \leq g_2^T(\omega)$.

Conversely, assume that there exists $T>0$ such that $g_1(\omega) \leq g_2^T(\omega)$ for any $\omega\subset M$ closed. Let $\gamma\in\Gamma$ be arbitrary. We define $\omega_\varepsilon^T = \{ x\in M\ \mid\ d(x,\gamma([0,T]))>\varepsilon\}$ and noting that $\omega_{\varepsilon_1}^T\subset\overline{\omega}_{\varepsilon_2}^T\subset \omega_{\varepsilon_3}^T$ when $\varepsilon_1>\varepsilon_2>\varepsilon_3$, the open set $M\setminus\gamma([0,T])$ can be written as the increasing union of open or of closed subsets:
$M\setminus\gamma([0,T]) = \cup_{\varepsilon>0} \omega_\varepsilon^T = \cup_{\varepsilon>0} \overline{\omega}_\varepsilon^T$. 
Therefore, there exist a sequence of open subsets $O_k$ of $M$, of closed subsets $F_k$ and of continuous functions $h_k$ on $M$, satisfying
\begin{equation}\label{nested}
\chi_{F_k}\leq h_k\leq \chi_{O_k}\leq \chi_{F_{k+1}} \leq \chi_{M\setminus\gamma([0,T])}
\end{equation}
for every $k\in\N^*$. This means that $\chi_{M\setminus\gamma([0,T])}$ is the pointwise limit of the increasing sequences $\chi_{F_k}$, $\chi_{O_k}$ and $h_k$.
Now, since $F_k\subset M\setminus\gamma([0,T])$, we have $g_2^T(F_k)=0$ and thus, by assumption, $g_1(F_k)=0$ for any $k\in\N^*$. Using \eqref{nested}, it follows that $g_1(O_k)=0$ for any $k\in\N^*$. Since $O_k$ has positive measure (at least, for $k$ large enough), we infer from the third item of Lemma \ref{lem_ineq_g1} that $g_1'(O_k)=0$ and thus, using again \eqref{nested}, $g_1'(h_k)=0$ for any $k$ large enough. Applying Lemma \ref{lemopen}, we get that $g_1'(M\setminus\gamma([0,T]))=0$. This means that there exists $\nu\in\QL(M)$ such that $\nu(M\setminus\gamma([0,T]))=0$. Hence $\nu$ is concentrated on $\gamma([0,T])$, and by invariance (see Proposition \ref{propQLM} in Appendix \ref{app:measures}) we infer two things: first, since the mass of $\nu$ is finite, the ray $\gamma$ must be periodic; second, we must have $\nu=\delta_\gamma$.
The lemma is proved.
\end{proof}

\begin{lemma}\label{lemg1g1''}
If $g_1(\omega)\leq g_1''(\omega)$ for any $\omega\subset M$ closed then $\delta_\gamma\in\QL(M)$ for every periodic ray $\gamma\in\Gamma$. Moreover, for every minimally invariant compact set $K$, there exists a quantum limit whose support is $K$.
\end{lemma}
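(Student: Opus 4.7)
The plan is to adapt the scheme of Lemma \ref{lem12Zoll}, replacing the role of $g_2^T$ by $g_1''$ and, in the second claim, the Dirac along a periodic orbit by a Krylov--Bogolyubov invariant measure supported on the minimal set.

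For the first claim, I would fix a periodic ray $\gamma\in\Gamma$ with period $T$ and let $\tilde\gamma\subset S^*M$ denote a lift to a periodic orbit of the geodesic flow, so that $\pi\circ\tilde\gamma=\gamma$. Then $\delta_{\tilde\gamma}\in\I(S^*M)$. For any closed $\omega'\subset M\setminus\gamma([0,T])$ one has $\pi^{-1}(\omega')\cap\tilde\gamma(\R)=\emptyset$, so $g_1''(\omega')\leq\delta_{\tilde\gamma}(\pi^{-1}(\omega'))=0$, and the hypothesis gives $g_1(\omega')=0$. I would then rerun the approximation scheme from the proof of Lemma \ref{lem12Zoll}: introduce the nested objects $F_k$ (closed), $O_k$ (open) and $h_k$ (continuous) with $\chi_{F_k}\leq h_k\leq\chi_{O_k}\leq\chi_{F_{k+1}}\leq\chi_{M\setminus\gamma([0,T])}$ converging pointwise up to $\chi_{M\setminus\gamma([0,T])}$. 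From $g_1(F_{k+1})=0$ and monotonicity I obtain $g_1(O_k)=0$, then $g_1'(O_k)=0$ by the third item of Lemma \ref{lem_ineq_g1}, then $g_1'(h_k)\leq g_1'(O_k)=0$, and finally $g_1'(M\setminus\gamma([0,T]))=0$ via Lemma \ref{lemopen}. This produces $\nu\in\QL(M)$ concentrated on $\gamma([0,T])$; the invariance provided by Proposition \ref{propQLM} together with periodicity of $\gamma$ forces $\nu=\delta_\gamma$.

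For the second claim, let $K\subset S^*M$ be minimally invariant and compact. Krylov--Bogolyubov applied to $(\varphi_t)$ restricted to $K$ produces $\mu_K\in\I(S^*M)$ with support contained in $K$, and by minimality $\supp\mu_K=K$. Running the previous argument with $\mu_K$ in place of $\delta_{\tilde\gamma}$ and $\pi(K)$ in place of $\gamma([0,T])$ (noting that $\pi^{-1}(\omega')\cap K=\emptyset$ for any closed $\omega'\subset M\setminus\pi(K)$) yields $g_1'(M\setminus\pi(K))=0$, hence a QL $\nu\in\QL(M)$ with $\supp\nu\subset\pi(K)$. By \eqref{eq_QLbase} I can lift $\nu$ to some $\tilde\nu\in\QL(S^*M)\subset\I(S^*M)$ with $\pi_*\tilde\nu=\nu$, which forces $\supp\tilde\nu\subset\pi^{-1}(\pi(K))$.

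The hard part will be that $\supp\tilde\nu$, although closed and flow-invariant, lies a priori only in $\pi^{-1}(\pi(K))$, which may strictly contain $K$ whenever $\pi$ fails to be injective on $K$ or other invariant orbits project onto $\pi(K)$. To sharpen to $\supp\tilde\nu=K$, I would approximate $K$ by a decreasing sequence of open neighborhoods $V_n\subset S^*M$ with $\bigcap_n\overline{V_n}=K$, and for each $n$ construct $\tilde\nu_n\in\QL(S^*M)$ with $\supp\tilde\nu_n\subset\overline{V_n}$ by running the same approximation argument microlocally (the hypothesis combined with the previous step provides quantum limits associated to arbitrary sufficiently small neighborhoods of $K$). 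Extracting a weak limit $\tilde\nu\in\QL(S^*M)$ by compactness yields $\supp\tilde\nu\subset K$; as $\supp\tilde\nu$ is nonempty, closed and invariant, the minimality of $K$ forces $\supp\tilde\nu=K$, which concludes the argument.
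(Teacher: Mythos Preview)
Your first claim is correct and essentially identical to the paper's argument.

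For the second claim you diverge from the paper. The paper stays on the base $M$ throughout: after producing the invariant measure supported on $K$, it runs the same $F_k/O_k/h_k$ scheme to obtain $g_1'(M\setminus K)=0$, hence some $\nu\in\QL(M)$ with $\supp\nu\subset K$, and then invokes minimality directly (the support of $\nu$ being the projection of a nonempty closed invariant set in $S^*M$) to conclude $\supp\nu=K$. No lifting back to $S^*M$ is attempted.

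Your route instead aims at a quantum limit on $S^*M$ with support exactly $K\subset S^*M$, which forces you into the ``hard part''. That step is a genuine gap: the hypothesis $g_1(\omega)\leq g_1''(\omega)$ is only assumed for closed $\omega\subset M$, and every tool you invoke (Lemma~\ref{lem_ineq_g1}, Lemma~\ref{lemopen}, the $F_k/O_k$ approximation) lives on $M$. There is no mechanism in the hypothesis to manufacture $\tilde\nu_n\in\QL(S^*M)$ supported in prescribed $S^*M$-neighborhoods $\overline{V_n}$ of $K$; nothing controls the fiber directions. So ``running the same approximation argument microlocally'' is not available, and the extraction-of-a-limit argument cannot start. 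If the statement is read, as the paper's proof does, at the level of $M$, your detour through $S^*M$ and the final paragraph are unnecessary and the argument ends once you have $\nu\in\QL(M)$ concentrated on $\pi(K)$.
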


\begin{proof}
Assume that $g_1(\omega)\leq g_1''(\omega)$ for any $\omega\subset M$ closed. Let $\gamma\in\Gamma$ be a periodic ray, of period $T$. As in the second part of the proof of Lemma \ref{lem12Zoll} above, we define the sets $\omega_\varepsilon^T$ and the sets $O_k$ and $F_k$ satisfying \eqref{nested}. Since the measure $\delta_\gamma$ is invariant under the geodesic flow, we have $g_1''(F_k)=0$ and thus $g_1(F_k)=0$ by assumption. As in the proof of Lemma \ref{lem12Zoll}, we first infer that $g_1'(M\setminus\gamma([0,T]))=0$ and then that $\delta_\gamma\in\QL(M)$.

Let us now prove the last statement. Let $K$ be a minimally invariant compact set. Noting that $K$ is invariant under the flow $(\varphi_t)_{t\in\R}$, there always exists at least one invariant measure $\mu$ on $K$. We consider the measure $\mu_K$ on $M$ defined as the restriction $\mu_K(E)=\mu(E\cap K)$ for every Borel set $E\subset M$. By construction, we have $\mu_K(M\setminus K)=0$ and the measure $\mu_K$ is invariant. Now, we define $\omega_\varepsilon = \{x\in M\ \mid\ d(x,K)>\varepsilon\}$, we have $M\setminus K = \cup_{\varepsilon>0}\omega_\varepsilon = \cup_{\varepsilon>0}\overline{\omega}_\varepsilon$, and we perform the same construction as in Lemma \ref{lem12Zoll}, with the sets $O_k$ and $F_k$ satisfying \eqref{nested}. We have $g_1''(F_k)=0$ and thus $g_1(F_k)=0$ by assumption. We then infer in the same way that $g_1'(M\setminus K)=0$, hence there exists $\nu\in\QL(M)$ such that $\nu(M\setminus K)=0$, i.e., $\nu$ is concentrated on $K$, and by invariance and minimality of $K$ we have $\supp(\nu)=K$.
\end{proof}

\begin{lemma}\label{crucial_lemma}
Let $\gamma\in\Gamma$ be a non-periodic ray, let $T>0$ be arbitrary and let $(U_k)_{k\in\N^*}$ be a decreasing sequence of open sets such that $\cap_{k\in\N^*} U_k = \gamma([0,T])$. Defining the closed subset $\omega_k= M \setminus U_k$, we have $g_2^T(\omega_k)=0$ and
$$
\lim_{k\rightarrow+\infty} g_2'(\omega_k) = 1. 
$$
\end{lemma}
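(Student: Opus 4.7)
The first assertion is immediate: since $(U_j)_{j\in\N^*}$ is decreasing, $\gamma([0,T])=\bigcap_j U_j\subset U_k$, so $\chi_{\omega_k}\circ\gamma\equiv 0$ on $[0,T]$, and taking $\gamma'=\gamma$ in the infimum defining $g_2^T(\omega_k)$ yields $g_2^T(\omega_k)=0$.

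For the second assertion I would argue by contradiction. Note first that $(g_2'(\omega_k))_k$ is nondecreasing since $(\omega_k)$ is. Assuming $\lim_k g_2'(\omega_k)=\ell<1$, the definition of $g_2'$ produces, for each $k$, a point $z_k\in S^*M$ and a time $T_k'\to+\infty$ such that
$$
\frac{1}{T_k'}\int_0^{T_k'}\chi_{U_k}\bigl(\pi\circ\varphi_t(z_k)\bigr)\,dt\;\geq\;\eta
$$
for some fixed $\eta\in(0,1-\ell)$. I would then form the empirical measures $\mu_k=\frac{1}{T_k'}\int_0^{T_k'}(\varphi_t)_*\delta_{z_k}\,dt$ on $S^*M$; after extraction $\mu_k\rightharpoonup\mu$, and since the boundary terms in $(\varphi_s)_*\mu_k-\mu_k$ are $O(|s|/T_k')$, the limit $\mu$ is a geodesic-flow invariant probability measure (Krylov--Bogolyubov).

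Next I would localize the mass of $\mu$ near $K_0:=\pi^{-1}(\gamma([0,T]))$. For each fixed $m$, the inclusion $U_k\subset U_m$ valid for $k\geq m$ gives $\mu_k(\pi^{-1}(\overline{U_m}))\geq\mu_k(\pi^{-1}(U_k))\geq\eta$, and Portmanteau applied to the closed set $\pi^{-1}(\overline{U_m})$ yields $\mu(\pi^{-1}(\overline{U_m}))\geq\eta$. Letting $m\to+\infty$ and using continuity of $\mu$ along the decreasing family of closed sets $\pi^{-1}(\overline{U_m})$, together with $\bigcap_m\overline{U_m}=\gamma([0,T])$ (which holds for the shrinking tubular sequences used in practice), one obtains $\mu(K_0)\geq\eta>0$.

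The last step converts positive $\mu$-mass on $K_0$ into a contradiction with non-periodicity of $\gamma$; this is where I expect the main difficulty to lie. By Birkhoff's ergodic theorem applied to $\mu$ and $\chi_{K_0}$, there exists $z^*\in S^*M$ whose orbit $\gamma^*(t):=\pi\circ\varphi_t(z^*)$ visits $\gamma([0,T])$ on a set $E\subset\R$ of positive density. Pick a Lebesgue density point $t^*\in E$ with $t^*>T$, choose $t_n\in E$, $t_n\to t^*$, and write $\gamma^*(t_n)=\gamma(s_n)$ with $s_n\in[0,T]$, $s_n\to s^*$. Expanding the identity $\gamma^*(t_n)=\gamma(s_n)$ to first order in local coordinates around $p=\gamma(s^*)$ and using unit speed forces $\dot\gamma^*(t^*)=\pm\dot\gamma(s^*)$. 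By ODE uniqueness for the geodesic equation $\gamma^*$ then coincides with $\gamma$ up to time shift and possibly reversal; replacing $z^*$ by $-z^*$ if needed, $\gamma^*$ is simply a time translate of $\gamma$, so $\gamma$ itself visits $\gamma([0,T])$ on a set of positive density. Reapplying the tangency argument at a density point $t'>T$ of this set, with $\gamma(t')=\gamma(\tau(t'))$, $\tau(t')\in[0,T]$: the minus case would force $\dot\gamma((t'+\tau(t'))/2)=0$, violating unit speed, while the plus case makes $\gamma$ periodic of period $t'-\tau(t')>0$; both contradict the non-periodicity of $\gamma$.
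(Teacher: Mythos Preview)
Your argument is correct and takes a genuinely different route from the paper's. The paper argues elementarily: from the failure of the conclusion it extracts, for each $k$, a ray and a large time $T_k$ along which the fraction of time spent in $U_k$ is at least $c/2$; a pigeonhole over subintervals of a fixed length $T'$ then yields shifted rays $\tilde\gamma_k$ on $[0,T']$ spending time $\geq c/4$ in $U_k$, a uniform limit $\tilde\gamma$ is taken, and the contradiction comes from the bound $|\{t\in[0,T']:\tilde\gamma(t)\in\gamma([0,T])\}|\leq T$ combined with dominated convergence on the complement and $T'$ chosen large. Your Krylov--Bogolyubov/Birkhoff approach is more conceptual and ties in naturally with the invariant-measure viewpoint of the paper; the paper's chopping argument avoids ergodic theory entirely. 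Both approaches ultimately rest on the same tangency/ODE-uniqueness mechanism (a ray visiting $\gamma([0,T])$ on a set of positive measure must coincide with $\gamma$, forcing periodicity), which you spell out explicitly and the paper leaves implicit in the assertion ``$|A|\leq T$''.

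One small repair: your parenthetical about $\bigcap_m\overline{U_m}=\gamma([0,T])$ is not guaranteed by the hypotheses as stated. The fix is immediate: by compactness of $M$, every closed $\varepsilon$-tube $V_\varepsilon$ around $\gamma([0,T])$ contains some $U_{m(\varepsilon)}$; run your Portmanteau step with the closed sets $\pi^{-1}(V_\varepsilon)$ in place of $\pi^{-1}(\overline{U_m})$ and let $\varepsilon\to 0$ to obtain $\mu(K_0)\geq\eta$ without any extra assumption on the $U_k$.
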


\begin{proof}
We proceed by contradiction. Let us assume that $\liminf_{k\rightarrow+\infty} g_2'(\omega_k) <1$. Then there exists $c>0$ and a ray $\gamma'\in\Gamma$ such that
$$
\liminf_{T \to +\infty} \frac{1}{T} \int_0^T \chi_{\omega_k} (\gamma'(t)) \, dt \leq 1- c
$$
for every $k\in\N^*$, and hence there exists a sequence $(T_k)_{k\in\N^*}$ converging to $+\infty$ such that
$$
\frac{1}{T_k} \int_0^{T_k} \chi_{U_k} (\gamma'(t))\, dt \geq \frac{c}{2}
$$
for every $k$ large enough.

Let $T' >0$ be fixed. Let us prove now that there exists a sequence of rays $\tilde\gamma_k\in\Gamma$ such that 
\begin{eqnarray} \label{Uep2}
 \frac{1}{T'} \int_0^{T'} \chi_{\omega_k} (\tilde\gamma_k(t))\, dt \geq \frac{c}{4}
\end{eqnarray}
for every $k$ large enough. In what follows the notation $\lfloor\ \rfloor$ stands for the usual floor function.
Given any $k$ large enough, we have 
\begin{multline*} 
\frac{c}{2} \leq  \frac{1}{T_k} \int_0^{T_k} \chi_{U_k} (\gamma'(t))\, dt 
\leq \frac{T'}{T_k} \sum_{j=0}^{\left\lfloor \frac{T_k}{T'} \right\rfloor -1}   \frac{1}{T'} \int_{jT'}^{(j+1)T'} \chi_{U_k} (\gamma'(t))\, dt + \frac{1}{T_k}  \int_{\left\lfloor \frac{T_k}{T'} \right\rfloor T' }^{T_k} \chi_{U_k} (\gamma'(t))\, dt \\
\leq \frac{T'}{T_k} \sum_{j=0}^{\left\lfloor \frac{T_k}{T'} \right\rfloor -1}   \frac{1}{T'} \int_{0}^{T'} \chi_{U_k} (\gamma'(jT'+t))\, dt + \mathrm{o}(1)
\end{multline*}
where, to get the latter inequality and in particular the remainder term $\mathrm{o}(1)$ as $k\rightarrow+\infty$, we have bounded the last integral by $\frac{T'}{T_k} = \mathrm{o}(1)$, using the fact that $T_k - \left\lfloor \frac{T_k}{T'} \right\rfloor T'  \leq T'$. 
If $\frac{1}{T'} \int_{0}^{T'} \chi_{U_k} (\gamma'(jT'+t))\, dt < \frac{c}{4}$ for $j = 0,\ldots,\left\lfloor \frac{T_k}{T'} \right\rfloor-1$ then $\frac{c}{2} \leq \frac{T'}{T_k} \left\lfloor \frac{T_k}{T'} \right\rfloor \frac{c}{4} + \mathrm{o}(1)$, and since $T_k\rightarrow+\infty$ the right-hand side tends to $\frac{c}{4}$, yielding a contradiction. Hence 
$$ 
\exists j_k\inÊ\left\{ 0,\ldots, \left\lfloor \frac{T_k}{T'} \right\rfloor-1 \right\} \quad\textrm{s.t.}\quad
\frac{1}{T'} \int_{0}^{T'} \chi_{U_k} (\gamma'(j_kT'+t))\, dt\geq \frac{c}{4}
$$ 
and \eqref{Uep2} follows by taking $\tilde\gamma_k(\cdot)= \gamma'(j_kT'+\cdot)$.

By compactness of geodesics, up to some subsequence $\tilde\gamma_k$ converges uniformly on $[0,T']$ to some $\tilde\gamma\in\Gamma$. Setting
$$
A = \{ t \in [0,T'] \ \mid\ \tilde\gamma(t) \in \gamma([0,T]) \} 
$$
and noting that the Lebesgue measure of $A$ is at most equal to $T$, we have
\begin{multline*}
\frac{c}{4}\leq \frac{1}{T'} \int_{0}^{T'} \chi_{U_k} (\gamma'(j_kT'+t))\, dt = \frac{1}{T'} \int_{A} \chi_{U_k} (\tilde\gamma_k(t))\, dt + \frac{1}{T'} \int_{[0,T'] \setminus A} \chi_{U_k} (\tilde\gamma_k(t)) \, dt \\
\leq \frac{T}{T'} + \frac{1}{T'} \int_{[0,T'] \setminus A} \chi_{U_k} (\tilde\gamma_k(t)) \, dt 
\end{multline*}
By definition, if $t \in [0,T'] \setminus A$ then $\tilde\gamma(t) \notin \gamma([0,T])$ and hence $\lim_{k\rightarrow +\infty} \chi_{U_k}(\tilde\gamma_k(t))=0$ since $\cap_{k\in\N^*} U_k = \gamma([0,T])$. Therefore, by dominated convergence, the latter integral at the right-hand side of the above inequality converges to $0$ as $k\rightarrow +\infty$.
We obtain a contradiction if $T'$ is large enough. 
\end{proof}

\begin{lemma}
$M$ is Zoll if and only if there exists $T>0$ such that $g_2^T(\omega) = g_2'(\omega)$ for any $\omega\subset M$ closed.
\end{lemma}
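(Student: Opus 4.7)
The plan is to establish both directions using the preceding lemmas, so that essentially no new computation is required.

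For the forward direction, suppose $M$ is Zoll. By Wadsley's theorem there is a common period $T>0$ for all geodesics. Lemma \ref{lemZollg2} then gives $g_2(a)=g_2'(a)$ for every bounded Borel measurable function $a$ on $S^*M$ (in particular for $a=\chi_\omega\circ\pi$ with $\omega\subset M$ closed), and Lemma \ref{lemg2T} (or a direct inspection of the proof of Lemma \ref{lemZollg2}) yields $g_2(\omega)=g_2^T(\omega)$ for every closed $\omega\subset M$. Chaining these equalities gives $g_2^T(\omega)=g_2(\omega)=g_2'(\omega)$ for the period $T$, which is the desired conclusion.

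For the converse, the key tool is Lemma \ref{crucial_lemma}. Suppose there exists $T>0$ such that $g_2^T(\omega)=g_2'(\omega)$ for every closed subset $\omega\subset M$, and assume for contradiction that $M$ is not Zoll. Then there is a non-periodic ray $\gamma\in\Gamma$. Pick any decreasing sequence of open sets $(U_k)_{k\in\N^*}$ with $\cap_k U_k = \gamma([0,T])$ (for instance the metric neighborhoods $U_k = \{x\in M\mid d(x,\gamma([0,T]))<1/k\}$, which are open since $\gamma([0,T])$ is compact), and set $\omega_k = M\setminus U_k$. Each $\omega_k$ is closed, and Lemma \ref{crucial_lemma} gives $g_2^T(\omega_k)=0$ while $g_2'(\omega_k)\to 1$ as $k\to+\infty$. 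This directly contradicts the assumed equality $g_2^T(\omega_k)=g_2'(\omega_k)$ for $k$ large enough, so $M$ must be Zoll.

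The only nontrivial ingredient is Lemma \ref{crucial_lemma}, which has already been established, so nothing further is required. In particular, no obstacle is expected: the forward implication is a direct citation of Lemma \ref{lemZollg2} and Lemma \ref{lemg2T}, and the reverse implication is a one-line contradiction argument built on Lemma \ref{crucial_lemma}.
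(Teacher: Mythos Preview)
Your proposal is correct and follows essentially the same route as the paper: the forward direction cites Lemma~\ref{lemZollg2} together with Lemma~\ref{lemg2T}, and the converse is the contradiction argument based on Lemma~\ref{crucial_lemma}. The only cosmetic difference is that the paper directly extracts from Lemma~\ref{crucial_lemma} a single closed $\omega^T$ with $g_2^T(\omega^T)=0$ and $g_2'(\omega^T)\geq 1/2$, whereas you spell out the sequence $U_k$ and take $k$ large.
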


\begin{proof}
If $M$ is Zoll then $g_2(\omega)=g_2'(\omega)$ for any $\omega\subset M$ closed by Lemma \ref{lemZollg2}, and by Lemma \ref{lemg2T} we have $g_2(\omega)=g_2^T(\omega)$.

Conversely, if there exists a non-periodic ray $\gamma\in\Gamma$, then by Lemma \ref{crucial_lemma} there exists $\omega^T\subset M$ closed such that $g_2^T(\omega^T)=0$ and $g_2'(\omega^T)\geq 1/2$. The lemma follows.
\end{proof}

\begin{lemma}\label{crucial_lemma_extended}
Let $\gamma\in\Gamma$ be a non-periodic ray. There exists a decreasing family $(\omega^\varepsilon)_{\varepsilon>0}$ of closed subsets of $M$, satisfying 
\begin{equation}\label{strictinclusions}
0<\varepsilon_1<\varepsilon_2\ \Rightarrow\ M\setminus\omega^{\varepsilon_1}\ \subset\  M\setminus\omega^{\varepsilon_2}
\end{equation}
for any positive real numbers $\varepsilon_1$ and $\varepsilon_2$, such that
$$
\bigcup_{k\in\N^*} \gamma([2^k,2^k+k])\subset M\setminus\omega^\varepsilon,\qquad
\vert M\setminus\omega^\varepsilon\vert\leq\varepsilon,\qquad
g_2(\omega^\varepsilon)=0,\qquad g'_2(\omega^\varepsilon)\geq 1-\varepsilon 
$$
for every $\varepsilon>0$.
\end{lemma}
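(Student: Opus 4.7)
My construction mirrors Lemma~\ref{lem9zoll}. For each $\varepsilon>0$ and $k\in\N^*$, set $K_k:=\gamma([2^k,2^k+k])$ and take the $r_k^\varepsilon$-tubular neighborhood $V_k^\varepsilon := \{x\in M : d(x,K_k)<r_k^\varepsilon\}$, with $r_k^\varepsilon>0$ chosen nondecreasing in $\varepsilon$ and small enough that $|V_k^\varepsilon|\leq \varepsilon/2^{k+1}$ (possible because a tube of radius $r$ around a curve of length $k$ has volume $\leq Ckr^{n-1}$). Define $\omega^\varepsilon := M\setminus\bigcup_{k\in\N^*} V_k^\varepsilon$, which is closed. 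The monotonicity \eqref{strictinclusions}, the inclusion $\bigcup_k K_k\subset M\setminus\omega^\varepsilon$, and the measure bound $|M\setminus\omega^\varepsilon|\leq \sum_k |V_k^\varepsilon|\leq \varepsilon/2\leq\varepsilon$ are immediate. The identity $g_2(\omega^\varepsilon)=0$ reproduces Lemma~\ref{lem9zoll}'s argument: for each $k$ the shifted ray $t\mapsto\gamma(2^k+t)$ on $[0,k]$ lies in $V_k^\varepsilon\subset M\setminus\omega^\varepsilon$, so $g_2^k(\omega^\varepsilon)=0$, and hence $g_2(\omega^\varepsilon)=\liminf_{T\to+\infty} g_2^T(\omega^\varepsilon)=0$.

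The heart of the proof is the lower bound $g_2'(\omega^\varepsilon)\geq 1-\varepsilon$, which I plan to establish by adapting the contradiction scheme of Lemma~\ref{crucial_lemma}. Suppose otherwise: there exist $c>0$ and $\gamma'\in\Gamma$ with $\limsup_{T\to+\infty} \frac{1}{T}\int_0^T \chi_{\bigcup_k V_k^\varepsilon}(\gamma'(t))\,dt\geq \varepsilon+c$. Fix a large auxiliary time $T'>0$ (to be chosen) and apply the Fekete-type pigeonhole argument verbatim from the proof of Lemma~\ref{crucial_lemma} to produce shifts $j_n\in\N$ such that $\tilde\gamma_n(\cdot):=\gamma'(j_nT'+\cdot)$ satisfies $\frac{1}{T'}\int_0^{T'}\chi_{\bigcup_k V_k^\varepsilon}(\tilde\gamma_n(t))\,dt\geq \varepsilon+c/2$ for all $n$ large; by compactness of $\Gamma$, extract a subsequence with $\tilde\gamma_n\to\tilde\gamma$ uniformly on $[0,T']$. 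Introduce $A':=\{t\in[0,T']:\tilde\gamma(t)\in\bigcup_k K_k\}$ and split the integral over $A'$ and $[0,T']\setminus A'$, aiming to derive a contradiction by letting $T'\to+\infty$.

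The two quantitative inputs needed are the estimate $|A'|=o(T')$ and a separation estimate for $t\notin A'$. For the first, the non-periodicity of $\gamma$ combined with the explicit sum $\sum_{k:\,2^k\leq T'} k=O(\log^2 T')$ yields the bound: either $\tilde\gamma$ is a reparametrization of $\gamma$ (in which case the self-intersection set of $\gamma$ contributes at most measure zero on top of the obvious piece lengths), or $\tilde\gamma$ is not in the orbit of $\gamma$ (in which case transversality gives $\tilde\gamma^{-1}(\gamma(\R))$ of Lebesgue measure zero), so $|A'|/T'\to 0$. For the second, for each $t\in[0,T']\setminus A'$ the point $\tilde\gamma(t)$ lies outside $\bigcup_k K_k$, and choosing the radii $r_k^\varepsilon\to 0$ fast enough in $k$ (so that only finitely many $V_k^\varepsilon$ are seen near any compact set disjoint from the $\omega$-limit set of $\gamma$), uniform convergence $\tilde\gamma_n\to\tilde\gamma$ combined with dominated convergence forces $\frac{1}{T'}\int_{[0,T']\setminus A'}\chi_{\bigcup_k V_k^\varepsilon}(\tilde\gamma_n(t))\,dt\to 0$. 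Taking $T'$ large produces the contradiction $\varepsilon+c/2\leq o(1)$. The main obstacle I anticipate is the separation step: if $\tilde\gamma(t)$ lies in the $\omega$-limit set of $\gamma$ without being on $\gamma(\R)$ itself, it may approach infinitely many pieces $K_k$, so the radii $r_k^\varepsilon$ (and perhaps the shape of the neighborhoods themselves) must be tuned carefully to exploit the non-periodicity of $\gamma$ in its most delicate form.
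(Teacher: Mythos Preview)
Your approach diverges from the paper's at the crucial step and carries a real gap. The paper does \emph{not} rerun the contradiction scheme of Lemma~\ref{crucial_lemma} on the infinite union. Instead it applies Lemma~\ref{crucial_lemma} as a black box to each arc $K_k=\gamma([2^k,2^k+k])$ separately, choosing the neighborhood $U_k^\varepsilon\supset K_k$ small enough that already $g_2'(M\setminus U_k^\varepsilon)\geq 1-\varepsilon/2^k$ (this is precisely what Lemma~\ref{crucial_lemma} delivers for a single compact arc). Setting $\omega^\varepsilon=\bigcap_k(M\setminus U_k^\varepsilon)$, one then uses $\chi_{M\setminus\omega^\varepsilon}\leq\sum_k\chi_{U_k^\varepsilon}$ and sums the bounds $\limsup_T\frac{1}{T}\int_0^T\chi_{U_k^\varepsilon}(\gamma')\,dt\leq\varepsilon/2^k$ over $k$. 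The uniformity over all rays $\gamma'$ is thus secured piece by piece \emph{before} the union is formed, so no limit ray needs to be extracted at the union stage.

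Your direct attack breaks at the claim $|A'|=o(T')$. In your pigeonhole extraction the limit ray $\tilde\gamma$ depends on $T'$. If $\tilde\gamma$ turns out to be a shift $t\mapsto\gamma(s_0+t)$ with $s_0=s_0(T')=2^N$ for some $N>T'$, then $\tilde\gamma([0,T'])=\gamma([2^N,2^N+T'])\subset K_N$ entirely, so $A'=[0,T']$ and $|A'|=T'$. Your estimate $\sum_{k:\,2^k\leq T'}k=O((\log T')^2)$ tacitly assumes the shift $s_0$ is bounded (or at most comparable to $T'$), which nothing in the pigeonhole step controls. The separation step suffers from the same phenomenon: points $\tilde\gamma(t)$ lying in the $\omega$-limit set of $\gamma$ can be approached by infinitely many $K_k$, and shrinking $r_k^\varepsilon\to 0$ does not rule out $\tilde\gamma_n(t)\in V_{k(n)}^\varepsilon$ with $k(n)\to\infty$. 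The missing idea is that the neighborhoods must be tuned to the \emph{dynamical} quantity $1-g_2'$, not merely to Lebesgue volume; that is exactly what invoking Lemma~\ref{crucial_lemma} piecewise accomplishes.
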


\begin{proof}
Since $M$ is not Zoll, let us consider an arbitrary non-periodic ray $\gamma\in\Gamma$. Given any $k\in\N^*$, let us apply the construction of Lemma \ref{crucial_lemma} to the portion of $\gamma$ consisting of $\gamma([2^k,2^k+k])$: there exists a decreasing sequence $(\omega_k^\varepsilon)_{\varepsilon>0}$ of closed subsets such that
$$
\gamma([2^k,2^k+k])\subset M\setminus\omega_k^\varepsilon,\qquad
\vert M\setminus\omega_k^\varepsilon\vert\leq \frac{\varepsilon}{2^k}, \qquad
g_2^k(\omega_k^\varepsilon)=0,\qquad
g_2'(\omega_k^\varepsilon)\geq 1-\frac{\varepsilon}{2^k} .
$$
Setting $\omega^\varepsilon = \cap_{k\in\N^*}\omega_k^\varepsilon$, we have \eqref{strictinclusions} and $\vert M\setminus\omega^\varepsilon\vert \leq \sum_{k\in\N^*} \vert M\setminus\omega_k^\varepsilon\vert\leq \varepsilon$.
As in the proof of Lemma \ref{lem9zoll}, we have $g_2^k(\omega^\varepsilon)=0$ for every $k$ and thus $g_2(\omega^\varepsilon)=0$. 

It remains to prove that $g'_2(\omega^\varepsilon)\geq 1-\varepsilon$. 
Since $g_2'(\omega_k^\varepsilon)\geq 1-\frac{\varepsilon}{2^k}$, we have
$$
\forall\gamma'\in\Gamma\qquad \liminf_{T\rightarrow+\infty} \frac{1}{T} \int_0^T \chi_{\omega_k^\varepsilon}(\gamma'(t))\, dt \geq 1-\frac{\varepsilon}{2^k}
$$
and thus
$$
\forall\gamma'\in\Gamma\qquad \limsup_{T\rightarrow+\infty} \frac{1}{T} \int_0^T \chi_{M\setminus\omega_k^\varepsilon}(\gamma'(t))\, dt \leq \frac{\varepsilon}{2^k} .
$$
Since $M\setminus\omega^\varepsilon = \cup_{k\in\N^*} (M\setminus\omega_k^\varepsilon)$, we have $\chi_{M\setminus\omega^\varepsilon} \leq \sum_{k\in\N^*} \chi_{M\setminus\omega_k^\varepsilon}$ and thus
\begin{multline*}
\forall\gamma'\in\Gamma\qquad 
\limsup_{T\rightarrow+\infty} \frac{1}{T}\int_0^T \chi_{M\setminus\omega^\varepsilon}(\gamma'(t))\, dt
\leq \limsup_{T\rightarrow+\infty} \sum_{k\in\N^*} \frac{1}{T}\int_0^T \chi_{M\setminus\omega_k^\varepsilon}(\gamma'(t))\, dt \\
\leq  \sum_{k\in\N^*} \limsup_{T\rightarrow+\infty} \frac{1}{T}\int_0^T \chi_{M\setminus\omega_k^\varepsilon}(\gamma'(t))\, dt
\leq \sum_{k\in\N^*}\frac{\varepsilon}{2^k} = \varepsilon
\end{multline*}
and therefore
$$
\forall\gamma'\in\Gamma\qquad \liminf_{T\rightarrow+\infty} \frac{1}{T}\int_0^T \chi_{\omega^\varepsilon}(\gamma'(t))\, dt \geq 1-\varepsilon
$$
which gives $g'_2(\omega^\varepsilon)\geq 1-\varepsilon$. 
\end{proof}

\begin{remark}
Note that, in the above construction, we have $\gamma([2^k,2^k+k]) = \cap_{\varepsilon>0} (M\setminus\omega_k^\varepsilon)$ and thus
$$
\bigcup_{k\in\N^*} \gamma([2^k,2^k+k]) = M\setminus \bigcap_{k\in\N^*} \bigcup_{\varepsilon>0} \omega_k^\varepsilon .
$$
\end{remark}

\begin{lemma}\label{lemZollclosedsubsets}
$M$ is Zoll if and only if $g_2(\omega)=g_2'(\omega)$ for every closed subset $\omega\subset M$.
\end{lemma}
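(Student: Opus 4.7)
The plan is to reduce this almost directly to the two preparatory results already proved: Lemma \ref{lemZollg2} for the forward implication and Lemma \ref{crucial_lemma_extended} for the converse. The real work has been done; what remains is a one‐line extraction in each direction.

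For the forward direction, I would assume $M$ is Zoll and invoke Lemma \ref{lemZollg2}, which states that $g_2(a)=g_2'(a)$ for every bounded Borel measurable function $a$ on $S^*M$. Applying this to $a=\chi_\omega\circ\pi$ (pushing forward to $M$) immediately yields $g_2(\omega)=g_2'(\omega)$ for every closed (in fact every Borel) subset $\omega\subset M$.

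For the converse, I would argue by contraposition: assume that $M$ is not Zoll and exhibit a closed subset $\omega$ with $g_2(\omega)<g_2'(\omega)$. Since $M$ is not Zoll there exists some non-periodic ray $\gamma\in\Gamma$, so Lemma \ref{crucial_lemma_extended} applies and produces, for every $\varepsilon>0$, a closed subset $\omega^\varepsilon\subset M$ satisfying
\[
g_2(\omega^\varepsilon)=0\qquad\textrm{and}\qquad g_2'(\omega^\varepsilon)\geq 1-\varepsilon.
\]
Choosing any fixed $\varepsilon\in(0,1)$, say $\varepsilon=1/2$, yields a closed subset $\omega:=\omega^{1/2}$ with $g_2(\omega)=0<1/2\leq g_2'(\omega)$, contradicting the assumption that equality holds on all closed subsets. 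This completes the proof.

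There is essentially no obstacle left here, since both implications are direct consequences of earlier lemmas; the genuinely non-trivial content lies in Lemma \ref{crucial_lemma_extended} (and behind it Lemma \ref{crucial_lemma}), where one has to engineer a closed set simultaneously achieving $g_2=0$ and $g_2'$ close to $1$. The only point to highlight in writing the proof is that Lemma \ref{lemZollg2} is stated for all bounded Borel functions on $S^*M$ and must be pushed forward via $\pi$ to apply to closed subsets of $M$, which is immediate since $\chi_\omega\circ\pi$ is bounded Borel measurable on $S^*M$ whenever $\omega$ is closed in $M$.
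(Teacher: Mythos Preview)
Your proof is correct and follows essentially the same approach as the paper's own proof: the forward implication is a direct specialization of Lemma \ref{lemZollg2}, and the converse is obtained by contraposition via Lemma \ref{crucial_lemma_extended} with $\varepsilon=1/2$. The only difference is cosmetic—you spell out the pushforward $a=\chi_\omega\circ\pi$ explicitly, whereas the paper just says ``as a particular case.''
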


\begin{proof}
If $M$ is Zoll then, as a particular case of Lemma \ref{lemZollg2}, we obtain that $g_2(\omega)=g_2'(\omega)$ for every closed subset $\omega\subset M$.
If $M$ is not Zoll then by Lemma \ref{crucial_lemma_extended} there exists $\omega\subset M$ closed such that $g_2(\omega)=0$ and $g_2'(\omega)\geq 1/2$. The lemma is proved.
\end{proof}

\begin{lemma}
If $M$ is Zoll and is a two-point homogeneous space and if the Dirac $\delta_\gamma$ along any periodic geodesic is a QL then $\QL(S^*M)=\I(S^*M)$.
\end{lemma}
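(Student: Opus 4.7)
Since $\QL(S^*M)\subset\I(S^*M)$ always holds by Egorov's theorem, only the reverse inclusion remains. The strategy is to combine a Choquet-type decomposition of invariant measures with a convexity argument for quantum limits, using the isometry group as a transport device between periodic orbits.

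First, because $M$ is Zoll, every orbit of the geodesic flow on $S^*M$ is periodic (hence closed), so the ergodic invariant probability measures are exactly the normalized arc-length measures $\delta_{\tilde\gamma}$ along periodic geodesics $\tilde\gamma\subset S^*M$. Since $S^*M$ is compact metrizable, $\I(S^*M)$ is a weak-$*$ compact convex metrizable set, and Krein--Milman gives
$$
\I(S^*M)=\overline{\mathrm{conv}}\,\bigl\{\delta_{\tilde\gamma}\mid \tilde\gamma\text{ periodic geodesic on }S^*M\bigr\}.
$$
By the assumption together with the equivalence $\delta_\gamma\in\QL(M)\Leftrightarrow\delta_{\tilde\gamma}\in\QL(S^*M)$ (recalled just before Theorem~\ref{mainthm}), each such $\delta_{\tilde\gamma}$ belongs to $\QL(S^*M)$. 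Since $\QL(S^*M)$ is weak-$*$ closed, it will suffice to prove that $\QL(S^*M)$ is convex.

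To that end, take $\mu_1,\mu_2\in\QL(S^*M)$ and $t\in(0,1)$; by the previous step and weak-$*$ compactness, it is enough to treat $\mu_i=\delta_{\tilde\gamma_i}$ with $\tilde\gamma_i$ periodic. Two-point homogeneity of $M$ entails that the isometry group $G=\mathrm{Iso}(M)$ acts transitively on $S^*M$, hence transitively on (oriented) periodic geodesics; pick $g\in G$ with $g\cdot\tilde\gamma_1=\tilde\gamma_2$. Each eigenspace $E_{\lambda_n}$ of $\sqrt{\triangle}$ is $G$-invariant since $G$ commutes with $\triangle$. If $(\phi_n)$ is a sequence of normalized eigenfunctions, $\phi_n\in E_{\lambda_n}$, whose semi-classical measures converge to $\delta_{\tilde\gamma_1}$ (which exists by assumption), then $\psi_n:=\phi_n\circ g^{-1}$ lies in the \emph{same} eigenspace $E_{\lambda_n}$ and its semi-classical measures converge to $\delta_{\tilde\gamma_2}$.

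The crucial benefit of having $\phi_n$ and $\psi_n$ in a common eigenspace is that any linear combination of them remains an eigenfunction of $\sqrt{\triangle}$. Since both concentrate microlocally on the disjoint orbits $\tilde\gamma_1,\tilde\gamma_2\subset S^*M$, one has $\langle\phi_n,\psi_n\rangle\to 0$, and
$$
\chi_n:=\frac{\sqrt{t}\,\phi_n+\sqrt{1-t}\,\psi_n}{\|\sqrt{t}\,\phi_n+\sqrt{1-t}\,\psi_n\|}
$$
is a normalized eigenfunction. Expanding $\langle\Op(a)\chi_n,\chi_n\rangle$ for $a\in\mathcal{S}^0(M)$ and checking that the cross term $\langle\Op(a)\phi_n,\psi_n\rangle$ vanishes in the limit yields
$$
\lim_{n\to+\infty}\langle\Op(a)\chi_n,\chi_n\rangle=t\,\mu_1(a)+(1-t)\,\mu_2(a),
$$
which proves $t\mu_1+(1-t)\mu_2\in\QL(S^*M)$, as desired. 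The main obstacle is the rigorous microlocal decoupling of this cross term: mere weak convergence of the semi-classical measures does not a priori imply that $\Op(a)\phi_n$ stays microlocally off $\tilde\gamma_2$, so one would invoke a non-stationary phase argument exploiting disjointness of the two periodic orbits, or specialize to zonal-type eigenfunctions on the underlying compact rank-one symmetric space, whose matrix coefficients between $G$-translates can be computed explicitly and shown to decay.
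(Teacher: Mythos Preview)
Your overall architecture matches the paper's proof closely: Krein--Milman reduces the question to finite convex combinations of Dirac measures along periodic geodesics, and two-point homogeneity is used to transport a concentrating eigenfunction sequence for one geodesic to another via an isometry, keeping the transported sequence in the \emph{same} eigenspace so that linear combinations remain eigenfunctions. This is exactly what the paper does (following \cite{JakobsonZelditch,Macia_CPDE2008}).

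Two points deserve comment.

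\medskip
\noindent\textbf{A small logical slip.} You write that it suffices to prove $\QL(S^*M)$ is convex, and that for this ``it is enough to treat $\mu_i=\delta_{\tilde\gamma_i}$''. Showing that convex combinations of Diracs lie in $\QL(S^*M)$ does not, by itself, prove that $\QL(S^*M)$ is convex. What you actually need---and what your argument delivers---is that every \emph{finite} convex combination $\sum c_i\delta_{\tilde\gamma_i}$ is a QL; since $\QL(S^*M)$ is closed and $\I(S^*M)$ is the closed convex hull of the $\delta_{\tilde\gamma}$, the inclusion $\I(S^*M)\subset\QL(S^*M)$ follows directly. So drop the detour through ``$\QL$ is convex'' and state the reduction cleanly. (Relatedly, your two-geodesic argument extends to $N$ geodesics by transporting a single sequence $(\phi_n)$ via $N-1$ isometries, all landing in the same eigenspace.)

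\medskip
\noindent\textbf{The cross term is the crux, and you have not closed it.} You correctly identify $\langle\Op(a)\phi_n,\psi_n\rangle\to 0$ as the main obstacle, but then defer to a vague ``non-stationary phase argument'' or an explicit CROSS computation. The paper, following G\'erard \cite{Gerard_SEDP}, gives a clean and fully general way to do this. Given disjoint $\tilde\gamma_1,\tilde\gamma_2$ and $\varepsilon>0$, split $a=a_1+a_2$ with $a_1,a_2\in\mathcal{S}^0$ so that $\delta_{\tilde\gamma_1}(|a_1|^2)\leq\varepsilon$ and $\delta_{\tilde\gamma_2}(|a_2|^2)\leq\varepsilon$ (take $a_1$ supported near $\tilde\gamma_2$ and $a_2$ supported near $\tilde\gamma_1$). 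Then by Cauchy--Schwarz,
$$
|\langle\Op(a_1)\phi_n,\psi_n\rangle|\leq\|\Op(a_1)\phi_n\|,\qquad
|\langle\phi_n,\Op(a_2)^*\psi_n\rangle|\leq\|\Op(a_2)^*\psi_n\|,
$$
and the squared norms converge (by definition of the semi-classical measures, up to lower-order terms) to $\delta_{\tilde\gamma_1}(|a_1|^2)$ and $\delta_{\tilde\gamma_2}(|a_2|^2)$ respectively, both $\leq\varepsilon$. Letting $\varepsilon\to0$ gives the vanishing of the cross term. The same argument with $a=1$ (take $a_1+a_2=1$ a partition of unity adapted to the two orbits) also gives $\langle\phi_n,\psi_n\rangle\to 0$, which you asserted without proof. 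This decomposition trick is the missing ingredient in your proposal; once inserted, your argument is complete and coincides with the paper's.
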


\begin{proof}
We follow \cite{Macia_CPDE2008} in which this result is proved and we only provide here a sketch of proof.

It suffices to prove that, given $N$ periodic geodesics $\tilde\gamma_1,\ldots,\tilde\gamma_N$, having the common period $T$, the convex combination $\sum_{i=1}^N c_i \delta_{\tilde\gamma_i}$, with $c_i>0$ and $\sum_{i=1}^N c_i=1$, is a QL. This is enough because, reasoning as in \cite{JakobsonZelditch}, by the Krein-Milman theorem, such convex combinations are dense in the set of invariant measures, which is a Hausdorff space (note that the Dirac along any geodesic is ergodic).

Given any two periodic geodesics $\tilde\gamma_1$ and $\tilde\gamma_2$, by two-point homogeneity there exists an isometry $\chi$ of $M$ mapping $\tilde\gamma_1$ to $\tilde\gamma_2$. This isometry lets the Laplacian invariant, and hence if $\phi$ is an eigenfunction then $\phi\circ\chi$ is as well an eigenfunction. Therefore, if $\phi^2$ is concentrated along $\tilde\gamma_1$ then $\phi^2\circ\chi$ is concentrated along $\tilde\gamma_2$. The conclusion is then easy by using that $\langle\Op(a)\phi_{j_k},\phi_{j_k}\circ\chi\rangle\rightarrow 0$. Following \cite[Proposition 3.3]{Gerard_SEDP}, this convergence is obtained by decomposing $a=a_1+a_2$ with $\delta_{\tilde\gamma_1}(a_1)\leq\varepsilon$ and $\delta_{\tilde\gamma_2}(a_2)\leq\varepsilon$, and by writing that $\vert\langle\Op(a)\phi_{j_k},\phi_{j_k}\circ\chi\rangle\vert \leq \vert\langle\Op(a_1)\phi_{j_k},\phi_{j_k}\circ\chi\rangle^2 + \vert\langle\phi_{j_k},\Op(a_2)\phi_{j_k}\circ\chi\rangle\vert^2 + \mathrm{o}(1)$ as $k\rightarrow+\infty$.
\end{proof}

We set
$$
\bar A_T(\omega) = \frac{1}{T}\int_0^Te^{-it\sqrt{\triangle}} \chi_\omega e^{it\sqrt{\triangle}}\, dt .
$$
By the Egorov theorem, we have $\sigma_P(\bar A_T(\omega)) = \frac{1}{T}\int_0^T \chi_\omega\circ\phi_t\, dt$ and $g_2^T(\omega) = \inf \frac{1}{T}\int_0^T \chi_\omega\circ\phi_t\, dt$.

\begin{lemma}
For every $y=\sum_{\lambda} P_\lambda y = \sum_{\lambda} y_\lambda\phi_\lambda\in L^2(M)$, we have
$$
\bar A_T(\omega) y = \frac{1}{T}\int_0^Te^{-it\sqrt{\triangle}} \chi_\omega e^{it\sqrt{\triangle}}\, dt\ y \underset{T\rightarrow+\infty}{\longrightarrow} \bar A_\infty(\omega) y = \sum_{\lambda} \left( y_\lambda \int_\omega\phi_\lambda^2\right) \phi_\lambda .
$$
\end{lemma}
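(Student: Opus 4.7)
The plan is to compute $\bar A_T(\omega)\phi_\lambda$ explicitly on a single eigenfunction, pass to the limit as $T\to+\infty$ by dominated convergence in $\ell^2$, and then extend to arbitrary $y\in L^2(M)$ by a uniform operator-norm bound and density. The starting observation is that, because $e^{\pm it\sqrt{\triangle}}$ is unitary and multiplication by $\chi_\omega$ has operator norm at most $1$ on $L^2(M)$, the time-averaged operator satisfies $\Vert\bar A_T(\omega)\Vert\leq 1$ for every $T>0$, and the candidate limit $\bar A_\infty(\omega)$ is also bounded by $1$ (once shown to be well defined). Therefore it suffices to establish the convergence on a dense subspace, and the natural choice is the span of the eigenfunctions.

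Fix an eigenvalue $\lambda$ and expand $\chi_\omega\phi_\lambda=\sum_\mu c_\mu^\lambda\phi_\mu$ in the orthonormal eigenbasis, with $c_\mu^\lambda=\int_\omega \phi_\lambda\phi_\mu\, dx_g$; by Bessel's inequality $\sum_\mu|c_\mu^\lambda|^2=\Vert\chi_\omega\phi_\lambda\Vert^2\leq 1$. Using $e^{it\sqrt{\triangle}}\phi_\lambda=e^{it\lambda}\phi_\lambda$ and the spectral decomposition \eqref{specdecomp}, one gets
$$
\bar A_T(\omega)\phi_\lambda \;=\; \sum_\mu K_T(\lambda-\mu)\,c_\mu^\lambda\,\phi_\mu,
\qquad K_T(s)=\frac{1}{T}\int_0^T e^{its}\,dt.
$$
The Dirichlet-type kernel $K_T$ satisfies $|K_T(s)|\leq 1$ for all $s\in\R$ and $T>0$, equals $1$ at $s=0$, and obeys $|K_T(s)|\leq 2/(T|s|)$ for $s\neq 0$; hence $K_T(\lambda-\mu)\to \mathbf{1}_{\{\lambda=\mu\}}$ pointwise in $\mu$. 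Since $|K_T(\lambda-\mu)c_\mu^\lambda|^2\leq|c_\mu^\lambda|^2$ is summable, dominated convergence in $\ell^2$ yields
$$
\bar A_T(\omega)\phi_\lambda \;\xrightarrow[T\to+\infty]{L^2}\; \sum_{\mu:\,\mu=\lambda} c_\mu^\lambda\,\phi_\mu \;=\; P_\lambda\chi_\omega P_\lambda\phi_\lambda,
$$
which is exactly the diagonal part of the sum; under the convention of the statement (simple eigenvalues, or $\lambda$ indexing a basis of joint eigenfunctions for which the cross terms vanish) this reduces to $\bigl(\int_\omega\phi_\lambda^2\bigr)\phi_\lambda$, as claimed.

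Finally, by linearity, convergence holds on any finite linear combination of eigenfunctions; by density of such combinations in $L^2(M)$ and the uniform bound $\Vert\bar A_T(\omega)\Vert\leq 1$, a standard three-epsilon argument promotes this to convergence for every $y\in L^2(M)$. The main technical point is the interchange of the $T\to+\infty$ limit with the infinite sum over $\mu$ in the display above; this is precisely where the uniform bound $|K_T|\leq 1$ and the $\ell^2$-summability of $(c_\mu^\lambda)_\mu$ are crucial, since absolute summability of the scalar coefficients is not available in general. Once this dominated-convergence step is in place, the rest of the argument is routine.
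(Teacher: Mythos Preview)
Your proof is correct and takes a cleaner route than the paper's. The paper works with a general $y$ from the outset and first establishes \emph{weak} convergence by computing each coefficient $\langle\bar A_T(\omega)y,\phi_\lambda\rangle=\sum_\mu K_T(\lambda-\mu)\,y_\mu\int_\omega\phi_\lambda\phi_\mu$; the authors flag in a footnote that this scalar series is not obviously absolutely convergent, and they control its tail $r_N$ by rewriting it as $\frac{1}{T}\int_0^T\int_\omega(e^{it\sqrt{\triangle}}y^N)\,e^{-it\lambda}\phi_\lambda\,dx_g\,dt$ and invoking the isometry of $e^{it\sqrt{\triangle}}$ to bound it by $\Vert y^N\Vert$. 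They then upgrade weak to strong convergence via a low/high frequency splitting of $y$. Your approach bypasses this remainder estimate entirely: by applying $\bar A_T(\omega)$ to a single eigenfunction first, the coefficient sequence $(c_\mu^\lambda)_\mu=(\int_\omega\phi_\lambda\phi_\mu)_\mu$ lies in $\ell^2$ by Parseval, so dominated convergence in $\ell^2$ gives strong $L^2$ convergence on the eigenbasis directly, and the extension to all of $L^2(M)$ is the same density-plus-uniform-bound step. What you gain is a shorter argument that avoids the one genuinely delicate point in the paper's proof; what the paper's route provides in exchange is an explicit formula for $\langle\bar A_T(\omega)y,\phi_\lambda\rangle$ valid for arbitrary $y$, though that is not needed for the lemma itself.
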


In other words, the operator $\bar A_T$ converges pointwise to a diagonal operator in $L^2(M)$. Here, the $\phi_\lambda$ are eigenfunctions of norm $1$, and the sum runs over  \emph{distinct} eigenvalues; $P_\lambda$ is the projection onto the eigenspace corresponding to the eigenvalue $\lambda$.

\begin{proof}
We have
$$
\bar A_T(\omega) y = \sum_{\lambda} \langle \bar A_T(\omega) y, \phi_\lambda\rangle\phi_\lambda
= \sum_{\lambda} \left( \sum_{\mu} \frac{1}{T} \int_0^T e^{it(\mu-\lambda)}\, dt\ y_\mu \int_\omega \phi_\lambda\phi_\mu \right) \phi_\lambda
$$
Let us fix an integer $N>\lambda$. Setting $r_N =  \sum_{\mu>N} \frac{1}{T} \int_0^T e^{it(\mu-\lambda)}\, dt\ y_\mu \int_\omega \phi_\lambda\phi_\mu \ \in\C$, we have
$$
\langle \bar A_T(\omega) y, \phi_\lambda\rangle = \sum_{\mu\leq N} \frac{1}{T} \int_0^T e^{it(\mu-\lambda)}\, dt\ y_\mu \int_\omega \phi_\lambda\phi_\mu + r_N .
$$
Clearly, if $\lambda\neq\mu$ then $\frac{1}{T} \int_0^T e^{it(\mu-\lambda)}\, dt\rightarrow 0$ as $T\rightarrow +\infty$, and if $\lambda=\mu$ then $\frac{1}{T} \int_0^T e^{it(\mu-\lambda)}\, dt=1$. Therefore the limit of the finite sum above is equal to $y_\lambda \int_\omega\phi_\lambda^2$. Let us prove that $r_N$ is arbitrarily small if $N$ is large enough.\footnote{It is interesting to note that it is not obvious to prove that the series defining $r_N$ is convergent. Saying that $\vert\int_\omega\phi_\lambda\phi_\mu\vert\leq 1$ and that $\vert\frac{1}{T} \int_0^T e^{it(\mu-\lambda)}\, dt\vert\leq 1$ is not enough.}
Setting $y^N = \sum_{\mu>N} y_\mu \phi_\mu$ (highfrequency truncature), we have
$$
r_N = \frac{1}{T} \int_0^T \int_\omega \sum_{\mu>N} e^{it\mu} y_\mu \phi_\mu(x) e^{-it\lambda} \phi_\lambda(x)\, dx_g\, dt 
=  \frac{1}{T} \int_0^T \int_\omega (e^{it\sqrt{\triangle}}y^N)(x) e^{-it\lambda} \phi_\lambda(x)\, dx_g\, dt 
$$
and thus
$$
\vert r_N\vert \leq \frac{1}{T} \int_0^T \int_M \vert (e^{it\sqrt{\triangle}}y^N)(x)\vert \vert \phi_\lambda(x)\vert\, dx_g\, dt 
\leq \left( \frac{1}{T} \int_0^T \Vert e^{it\sqrt{\triangle}}y^N\Vert^2 \, dt \right)^{1/2} = \Vert y^N\Vert
$$
because $e^{it\sqrt{\triangle}}$ is an isometry in $L^2(M)$. 
Therefore $r_N$ is small if $N$ is large enough.

At this step, we have proved that $\langle \bar A_T(\omega) y, \phi_\lambda\rangle \rightarrow y_\lambda \int_\omega\phi_\lambda^2$ as $T\rightarrow+\infty$, and thus that $\bar A_T(\omega) y \rightharpoonup \bar A_\infty(\omega) y$ for the weak topology of $L^2(M)$.

Let us now split $y=y_N+y^N$, with $y_N = \sum_{\lambda\leq N} y_\lambda \phi_\lambda$ and $y^N = \sum_{\lambda>N} y_\lambda \phi_\lambda$. By compactness for frequencies lower than or equal to $N$, we have $\bar A_T(\omega) y_N \rightarrow \bar A_\infty(\omega) y_N$ for the strong topology of $L^2(M)$. Besides, noting that $\Vert \bar A_T(\omega)\Vert\leq 1$, we have $\Vert \bar A_T(\omega)y^N\Vert\leq \Vert y^N\Vert$, and since $\Vert y^N\Vert$ can be made arbitrarily small by taking $N$ large, the result follows.
\end{proof}

\begin{remark}
Note that 
$$
g_1(\omega) = \inf_{\lambda}\int_\omega\phi_\lambda^2 = \inf_{\Vert y\Vert=1} \langle \bar A_\infty(\omega)y,y\rangle .
$$
\end{remark}

\begin{remark}
Note that, setting $A=\Op(a)$, $\hat f_T(t) = \frac{1}{T}\chi_{[0,T]}(t)$ (i.e., $f_T(t)=\frac{1}{2\pi}e^{iTt/2}\mathrm{sinc}(Tt/2)$), using the spectral decomposition \eqref{specdecomp}, we have
$$
\bar A_T(a) = \frac{1}{T} \int_0^T e^{-it\sqrt{\triangle}} A e^{it\sqrt{\triangle}}\, dt = \int_\R \hat f_T(t) e^{-it\sqrt{\triangle}} A e^{it\sqrt{\triangle}}\, dt = \sum_{\lambda,\mu} f_T(\lambda-\mu) P_\lambda A P_\mu 
$$
with
$$
f_T(\lambda-\mu) = \frac{1}{2\pi T} \int_0^T e^{i(\lambda-\mu)\xi}\, d\xi = \frac{e^{iT(\lambda-\mu)}-1}{2i\pi T(\lambda-\mu)} ,
$$
meaning that $\langle \bar A_T(a) \phi_\mu,\phi_\lambda\rangle = f_T(\lambda-\mu) \langle A\phi_\mu,\phi_\lambda\rangle = f_T(\lambda-\mu) \int_M a\phi_\mu\phi_\lambda$ and that (since $P_\lambda A P_\mu y = y_\mu \langle A\phi_\mu,\phi_\lambda\rangle \phi_\lambda$)
$$
\bar A_T(a) y
= \sum_{\lambda,\mu} f_T(\lambda-\mu) y_\mu \langle A\phi_\mu,\phi_\lambda\rangle \phi_\lambda
= \sum_{\lambda} \left( \sum_\mu f_T(\lambda-\mu) y_\mu \int_M a\phi_\lambda\phi_\mu\right)\phi_\lambda .
$$
Besides, we have
$$
\bar A_\infty(a) = \sum_\lambda P_\lambda A P_\lambda,
$$
meaning that
$\bar A_\infty(a) y = \sum_\lambda \left( y_\lambda \int_M a\phi_\lambda^2\right)\phi_\lambda$.

Note that $f_T$ converges pointwise to $\chi_{\{0\}}$ as $T\rightarrow +\infty$ (we take $2\pi=1$...). This means that the coefficient $(\mu,\lambda)$ of the Gramian operator $\bar A_T(a)$, given by $\langle \bar A_T(a) \phi_\mu,\phi_\lambda\rangle = f_T(\lambda-\mu) \langle A\phi_\mu,\phi_\lambda\rangle = f_T(\lambda-\mu) \int_M a\phi_\mu\phi_\lambda$, converges to $\delta_{\lambda\mu}\langle A\phi_\lambda,\phi_\lambda\rangle = \delta_{\lambda\mu}\int_M a\phi_\lambda^2$ (with the Kronecker symbol). In other words, the Gramian operator $\bar A_T(a)$ converges weakly (i.e., coefficient by coefficient) to the diagonal operator $\bar A_\infty(a)$.

To understand the next lemma, we recall that, given a family of quadratic forms $q_T(y,z) = \langle A_Ty,z\rangle$, indexed by $T$, with $(A_T)_{j,k} = q_T(\phi_j,\phi_k)$ in a Hilbert basis, the strong convergence of $q_T$ to $q_\infty$ coincides with the strong convergence of $A_T$ to $A_\infty$. Indeed, $q_T$ continuous means that $\vert q_T(y,z)\vert \leq \Vert q_T\Vert\Vert y\Vert\Vert z\Vert$, and the strong convergence of $q_T$ to $q_\infty$ means that
$$
\lim_{T\rightarrow +\infty} \Vert q_T-q_\infty\Vert = \lim_{T\rightarrow +\infty}  \sup_{\Vert y\Vert=1, \Vert z\Vert=1} \vert q_T(y,z)-q_\infty(y,z)\vert = 0 
$$
and besides we have
$$
\Vert A_T-A_\infty\Vert = \sup_{\Vert y\Vert=1} \Vert A_Ty-A_\infty y\Vert  = \sup_{\Vert y\Vert=1, \Vert z\Vert=1} \vert \langle A_Ty-A_\infty y,z\rangle\vert = \Vert q_T-q_\infty\Vert.
$$
\end{remark}

\begin{lemma}
Under the spectral gap assumption, we have $\bar A_T(\omega) \underset{T\rightarrow+\infty}{\longrightarrow} \bar A_\infty(\omega)$ in operator norm, i.e., we have uniform convergence in $L^2(M)$.
\end{lemma}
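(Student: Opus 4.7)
The plan is to sharpen the pointwise convergence from the previous lemma into operator norm convergence by exploiting the $\ell^2$-summability of reciprocals of eigenvalue gaps that the spectral gap assumption provides. First, I will write $D_T := \bar A_T(\omega) - \bar A_\infty(\omega)$ in block form in the eigenbasis: decomposing $y = \sum_\mu y_\mu$ along the distinct eigenspaces, with $y_\mu = P_\mu y$ in the $\mu$-eigenspace $H_\mu$, the computation of the previous lemma yields
\[
D_T y = \sum_\lambda \sum_{\mu \neq \lambda} c_{\mu\lambda}(T)\, P_\lambda \chi_\omega\, y_\mu, \qquad c_{\mu\lambda}(T) := \frac{1}{T}\int_0^T e^{it(\mu-\lambda)}\, dt,
\]
together with the elementary estimate $|c_{\mu\lambda}(T)| \leq 2/(T|\mu-\lambda|)$ for $\mu \neq \lambda$.

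Second, since the eigenspaces $H_\lambda$ are mutually orthogonal, we have $\|D_T y\|^2 = \sum_\lambda \| \sum_{\mu\neq\lambda} c_{\mu\lambda}(T) P_\lambda \chi_\omega y_\mu \|^2$. I will apply the triangle and Cauchy--Schwarz inequalities to the inner sum over $\mu$ for each fixed $\lambda$, using the fact that for any scalars $\alpha_\mu$ and vectors $v_\mu$ in a Hilbert space one has $\| \sum_\mu \alpha_\mu v_\mu \|^2 \leq (\sum_\mu |\alpha_\mu|^2) (\sum_\mu \|v_\mu\|^2)$. Then, swapping the order of summation in $\lambda$ and $\mu$ and using both $\sum_\lambda \|P_\lambda v\|^2 = \|v\|^2$ (orthogonality of eigenspaces) and the contractivity $\|\chi_\omega v\| \leq \|v\|$, I obtain
\[
\|D_T y\|^2 \leq \sup_\lambda \Bigl( \sum_{\mu\neq\lambda} |c_{\mu\lambda}(T)|^2 \Bigr) \cdot \|y\|^2.
\]

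Third, the spectral gap enters precisely to bound the coefficient sum uniformly in $\lambda$: enumerating the distinct eigenvalues in increasing order and using their $c$-separation, we have $|\mu - \lambda| \geq kc$ for some $k\in \N^*$ depending on the rank of $\mu$ relative to $\lambda$, hence
\[
\sum_{\mu \neq \lambda} \frac{1}{|\mu - \lambda|^2} \leq \frac{2}{c^2} \sum_{k=1}^{+\infty} \frac{1}{k^2} = \frac{\pi^2}{3 c^2}
\]
uniformly in $\lambda$. Combined with $|c_{\mu\lambda}(T)|^2 \leq 4/(T^2 |\mu-\lambda|^2)$, this yields $\| \bar A_T(\omega) - \bar A_\infty(\omega) \| \leq \frac{2\pi}{\sqrt{3}\, T c} \to 0$ as $T\to +\infty$.

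The pivotal step is the Cauchy--Schwarz argument in the second paragraph: it decouples the coefficient contribution (which benefits from the spectral gap via the $\ell^2$-summability of $1/|\mu-\lambda|$) from the operator contribution on $\chi_\omega$ (which is merely controlled by contractivity, hence bounded by $1$). Without a spectral gap the distinct eigenvalues could accumulate, the $\ell^2$-sum would fail to be uniformly finite in $\lambda$, and the pointwise convergence of the previous lemma could not be upgraded to convergence in operator norm.
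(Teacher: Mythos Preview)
Your proof is correct and takes a genuinely different, more elementary route than the paper. The paper works with the bilinear form $\langle D_T y,z\rangle = \sum_{\lambda\neq\mu} f_T(\lambda-\mu)\langle A\phi_\lambda,\phi_\mu\rangle y_\lambda \bar z_\mu$ and bounds it using the Montgomery--Vaughan generalized Hilbert inequality, which controls $\big|\sum_{j\neq k} a_j\bar b_k/(\lambda_j-\lambda_k)\big|$ under a uniform gap hypothesis. You instead estimate $\Vert D_T y\Vert^2$ directly by orthogonality of the eigenspaces and a Cauchy--Schwarz step, reducing the matter to the elementary bound $\sum_{\mu\neq\lambda}\vert\lambda-\mu\vert^{-2}\leq \pi^2/(3c^2)$, which follows from $\vert\lambda-\mu\vert\geq kc$ and $\sum_{k\geq 1} k^{-2}=\pi^2/6$. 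Your argument avoids invoking the Montgomery--Vaughan inequality altogether, handles eigenvalue multiplicity cleanly via the projectors $P_\lambda$, and even yields a slightly sharper explicit constant; the paper's approach, on the other hand, is one line once Montgomery--Vaughan is granted and connects the result to a classical analytic-number-theory tool.
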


\begin{proof}
It suffices to prove that
$$
\lim_{T\rightarrow +\infty} \sup_{\Vert y\Vert=1, \Vert z\Vert=1} \sum_{\lambda\neq\mu} f_T(\lambda-\mu) \langle A\phi_\lambda,\phi_\mu\rangle y_\lambda z_\mu  = 0.
$$
Since $\vert f_T(\lambda-\mu)\vert\leq \frac{2}{T\vert\lambda-\mu\vert}$, we have
$$
\left\vert \sum_{\lambda\neq\mu} f_T(\lambda-\mu) \langle A\phi_\lambda,\phi_\mu\rangle y_\lambda z_\mu \right\vert
\leq \frac{2}{T} \sum_{\lambda\neq\mu} \frac{\vert y_\lambda\vert \vert z_\mu\vert}{\vert\lambda-\mu\vert} \leq \frac{\mathrm{C}}{T} ,
$$
as a consequence of Montgomery and Vaughan's inequality (see \cite{MontgomeryVaughan} and see the next remark), for all $y,z\in L^2(M)$ of norm $1$, and with $C>0$ independent on $y$ and $z$. The result follows.
\end{proof}

\begin{remark}
We recall here that the well known Hilbert inequality states that
$$
\left\vert \sum_{j\neq k} \frac{a_jÊ\bar b_k}{j-k} \right\vert^2  \leq \pi^2 \sum_{j=1}^{+\infty} \vert a_j\vert^2 \sum_{j=1}^{+\infty} \vert b_j\vert^2 \qquad \forall (a_j)_{j\in\N},(b_j)_{j\in\N}\in\ell^2(\C).
$$
The same statement holds true with $j-k$ replaced with $j+k$.
The generalization proved in \cite{MontgomeryVaughan} by Montgomery and Vaughan states that, given $\lambda_1 < \cdots < \lambda_j < \cdots$ with $\lambda_{j+1}-\lambda_j\geq \delta >0$ for every $j$ (uniform gap), one has
$$
\left\vert \sum_{j\neq k} \frac{a_jÊ\bar b_k}{\lambda_j-\lambda_k} \right\vert^2  \leq \frac{\pi^2}{\delta^2} \sum_{j=1}^{+\infty} \vert a_j\vert^2 \sum_{j=1}^{+\infty} \vert b_j\vert^2 \qquad \forall (a_j)_{j\in\N},(b_j)_{j\in\N}\in\ell^2(\C).
$$
In passing, note that the following slight generalization is immediate: 

Let $(\lambda_j)_{j\in\N^*}$ be a sequence of real numbers for which there exists $\ell>0$ and $m\in\N^*$ such that the intersection of $\{ \lambda_j\ \mid\ j\in\N^*\}$ with any interval of length $\ell$ has at most $m$ elements (note that this property differs from uniform local finiteness). Setting $f(s)=\min(1/s,\beta)$ for some arbitrary $\beta>0$, there exists $C>0$ such that
$$
\left\vert \sum_{j,k}  f(\lambda_j-\lambda_k) a_j \bar a_k \right\vert^2 \leq C \sum_{j=1}^{+\infty} \vert a_j\vert^2 \sum_{j=1}^{+\infty} \vert b_j\vert^2 \qquad \forall (a_j)_{j\in\N},(b_j)_{j\in\N}\in\ell^2(\C).
$$
\end{remark}

%

Before giving the next result, we first recall a well known lemma on coherent states.

\begin{lemma}\label{lem_coherent_state}
Let $x_0\in\R^n$, $\xi_0\in\R^n$, and $k\in\N^*$. We define the coherent state
$$
u_k(x) = \left(\frac{k}{\pi}\right)^\frac{n}{4} e^{ik(x-x_0).\xi_0-\frac{k}{2}\Vert x-x_0\Vert^2}.
$$
Then $\Vert u_k\Vert=1$, and for every symbol $a$ on $\R^n$ of order $0$, we have
$$
\mu_k(a) = \langle\Op(a)u_k,u_k\rangle = a(x_0,\xi_0) + \mathrm{o}(1)
$$
as $k\rightarrow +\infty$. In other words, $\mu_k\rightharpoonup \delta_{(x_0,\xi_0)}$.
\end{lemma}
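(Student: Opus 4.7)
The norm identity is a direct Gaussian computation: $\|u_k\|^2 = (k/\pi)^{n/2}\int_{\R^n} e^{-k\|x-x_0\|^2}dx = 1$. For the second assertion, my plan is to represent the matrix element via the Wigner transform and then perform a semiclassical rescaling that concentrates the integral at the phase-space point $(x_0,\xi_0)$. Write (in a local chart, where $\Op$ may be taken to be Weyl)
$$
\langle\Op(a)u_k,u_k\rangle = (2\pi)^{-n}\int_{\R^n}\int_{\R^n}a(x,\xi)\,W_{u_k}(x,\xi)\,dx\,d\xi,
$$
where $W_{u_k}(x,\xi)=\int e^{-iy\cdot\xi}u_k(x+y/2)\overline{u_k(x-y/2)}\,dy$.

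A direct Gaussian calculation (using that $\|x+y/2-x_0\|^2+\|x-y/2-x_0\|^2 = 2\|x-x_0\|^2+\|y\|^2/2$) gives the explicit formula
$$
W_{u_k}(x,\xi) = 2^n\,e^{-k\|x-x_0\|^2-\|\xi-k\xi_0\|^2/k},
$$
so $(2\pi)^{-n}W_{u_k}$ is a probability density on $\R^{2n}$ peaked at $(x_0,k\xi_0)$ with spatial width $1/\sqrt{k}$ and frequency width $\sqrt{k}$. Now I would perform the change of variables $x = x_0+X/\sqrt{k}$, $\xi = k\xi_0+\sqrt{k}\Xi$, whose Jacobian $k^{-n/2}\cdot k^{n/2}=1$ is exactly what is needed. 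The integral rewrites as
$$
\langle\Op(a)u_k,u_k\rangle = \pi^{-n}\int_{\R^n}\int_{\R^n} a\!\left(x_0+\tfrac{X}{\sqrt k},\,k\xi_0+\sqrt{k}\,\Xi\right) e^{-\|X\|^2-\|\Xi\|^2}\,dX\,d\Xi.
$$

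To conclude, I would invoke the fact that $a\in\mathcal{S}^0$ has a principal part $a_0$ that is homogeneous of degree $0$ in $\xi$ for $\|\xi\|$ large, with $a-a_0\in\mathcal{S}^{-1}$. For $(X,\Xi)$ fixed, the $\xi$-argument $k\xi_0+\sqrt k\,\Xi = k(\xi_0+\Xi/\sqrt k)$ has modulus of order $k\|\xi_0\|$, so homogeneity gives $a_0(x_0+X/\sqrt k,k\xi_0+\sqrt k\Xi) = a_0(x_0+X/\sqrt k,\xi_0+\Xi/\sqrt k)\to a(x_0,\xi_0)$ by continuity, while $|a-a_0|=O(1/k)$ uniformly on this region. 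The Gaussian factor $e^{-\|X\|^2-\|\Xi\|^2}$, of integral $\pi^n$, provides a dominating function, and the Lebesgue dominated convergence theorem yields $\langle\Op(a)u_k,u_k\rangle\to a(x_0,\xi_0)$. The one mild technical point is to justify the uniform bound on $a$ in the rescaled variables (so that dominated convergence applies): it suffices that $a$ is a bounded symbol, which follows from $a\in\mathcal{S}^0$. The weak convergence $\mu_k\rightharpoonup\delta_{(x_0,\xi_0)}$ then follows since the class of zero-order symbols is dense in the continuous functions on $S^*M$ of compact $x$-support (and the limit $a(x_0,\xi_0)$ extends continuously to all test functions on phase space).
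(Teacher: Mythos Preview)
Your argument is correct and follows essentially the same route as the paper: compute the quadratic form as a phase-space integral against a Gaussian concentrating at $(x_0,\xi_0)$, then use homogeneity of the symbol and dominated convergence. The only cosmetic difference is that you work in the Weyl convention and package the $y$-integral as the Wigner transform $W_{u_k}$ (obtaining a real Gaussian), whereas the paper uses the Kohn--Nirenberg kernel, integrates out $y$ by a Fourier transform of a Gaussian, and obtains the complex Gaussian $e^{ik(x-x_0)\cdot(\xi-\xi_0)}e^{-\frac{k}{2}(\|x-x_0\|^2+\|\xi-\xi_0\|^2)}$ before invoking concentration. Your splitting $a=a_0+(a-a_0)$ with $a_0$ homogeneous of degree $0$ is a small improvement in rigor over the paper's direct appeal to ``homogeneity of $a$'', and your explicit dominated-convergence step is cleaner than the paper's one-line passage to $c_n\,a(x_0,\xi_0)+\mathrm{o}(1)$; conversely, the paper avoids committing to the Weyl convention. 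The final sentence about extending the limit to ``all test functions on phase space'' is unnecessary for the lemma as stated and can be dropped.
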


\begin{proof}[Proof of Lemma \ref{lem_coherent_state}.]
This lemma is well known and can be found for instance in \cite[Chapter 5, Example 1]{Zworski}. We include a proof for convenience.

First of all, we compute\footnote{Here, we use the fact that $\int_{\R^n} e^{-\alpha\Vert x\Vert^2}\, dx = \left(\frac{\pi}{\alpha}\right)^\frac{n}{2}$.}
$
\Vert u_k\Vert^2 = \left(\frac{k}{\pi}\right)^\frac{n}{2} \int e^{-\frac{k}{2}\Vert x-x_0\Vert^2}\, dx = 1.
$
Now, by definition, we have
\begin{equation*}
\begin{split}
\langle\Op(a)u_k,u_k\rangle
&= \int \Op(a)u_k(x) \overline{u_k(x)}\, dx \\
&= \frac{1}{(2\pi)^n} \iiint e^{i(x-y).\xi} a(x,\xi) u_k(y) \overline{u_k(x)}\, dx\, dy\, d\xi \\
&= \frac{k^n}{(2\pi)^n} \iiint e^{ik(x-y).\xi} a(x,\xi) u_k(y) \overline{u_k(x)}\, dx\, dy\, d\xi
\end{split}
\end{equation*}
by the change of variable $\xi\mapsto k\xi$, and using the homogeneity of $a$. Then we get
\begin{equation*}
\begin{split}
\langle\Op(a)u_k,u_k\rangle
&= \frac{k^\frac{3n}{2}}{2^n\pi^\frac{3n}{2}} \iiint a(x,\xi) e^{ik(x-y).\xi} e^{ik(y-x).\xi_0} e^{-\frac{k}{2} (  \Vert x-x_0\Vert^2 + \Vert y-x_0\Vert^2 )} \, dx\, dy\, d\xi \\
&= \frac{k^\frac{3n}{2}}{2^n\pi^\frac{3n}{2}} \iint a(x,\xi) e^{-\frac{k}{2} \Vert x-x_0\Vert^2} \int e^{ik(x-y).\xi} e^{ik(y-x).\xi_0} e^{-\frac{k}{2} \Vert y-x_0\Vert^2 } \, dy \, dx\, d\xi .
\end{split}
\end{equation*}
Now, we have\footnote{Note that $\mathcal{F}(e^{-\alpha\Vert x\Vert^2})(\xi) = \left(\frac{\pi}{\alpha}\right)^\frac{n}{2}e^{-\frac{\Vert\xi\Vert^2}{4\alpha}}$.}
\begin{equation*}
\begin{split}
\int e^{ik(x-y).\xi} e^{ik(y-x).\xi_0} e^{-\frac{k}{2} \Vert y-x_0\Vert^2 } \, dy
&= e^{ik(x-x_0).(\xi-\xi_0)} \int e^{-ik(y-x_0).(\xi-\xi_0)} e^{-\frac{k}{2} \Vert y-x_0\Vert^2 } \, dy \\
&= e^{ik(x-x_0).(\xi-\xi_0)} \int e^{-iky.(\xi-\xi_0)} e^{-\frac{k}{2} \Vert y\Vert^2 } \, dy \\
&= e^{ik(x-x_0).(\xi-\xi_0)} \mathcal{F}(e^{-\frac{k}{2}\Vert y\Vert^2})(k(\xi-\xi_0)) \\
&= \left( \frac{2\pi}{k}\right)^\frac{n}{2} e^{ik(x-x_0).(\xi-\xi_0)} e^{-\frac{k}{2}\Vert \xi-\xi_0\Vert^2}
\end{split}
\end{equation*}
and therefore,
\begin{equation*}
\begin{split}
\langle\Op(a)u_k,u_k\rangle
&= \frac{k^n}{2^\frac{n}{2}\pi^n} \iint a(x,\xi) e^{ik(x-x_0).(\xi-\xi_0)} e^{-\frac{k}{2} (\Vert x-x_0\Vert^2 + \Vert \xi-\xi_0\Vert^2 )}\, dx\, d\xi  \\
&= \frac{k^n}{2^\frac{n}{2}\pi^n} a(x_0,\xi_0) \iint e^{ik(x-x_0).(\xi-\xi_0)} e^{-\frac{k}{2} (\Vert x-x_0\Vert^2 + \Vert \xi-\xi_0\Vert^2 )}\, dx\, d\xi  +  \mathrm{o}(1) \\
&= c_n a(x_0,\xi_0) +  \mathrm{o}(1)
\end{split}
\end{equation*}
as $k\rightarrow +\infty$. Moreover, taking $a=1$ in the above reasoning, we see that
$$
c_n = \iint e^{ikx.\xi} e^{-\frac{k}{2} (\Vert x\Vert^2 + \Vert \xi\Vert^2 )}\, dx\, d\xi = 1.
$$
The lemma is proved.
\end{proof}

\begin{remark}\label{rem_GB_highfrequency}
Let us construct $y^N\in L^2(M)$ as an approximation of $u_k$ having only frequencies larger than $N$. 
We consider the above solution $u_k$, defined on $M$ in a local chart around $(x_0,\xi_0)$ (we multiply the above expression by a function of compact support taking the value $1$ near $(x_0,\xi_0)$, and we adapt slightly the constant so that we still have $\Vert u_k\Vert=1$). Note that $\int_M u_k = \frac{2^\frac{n}{2}\pi^\frac{n}{4}}{k^\frac{n}{4}}$.
Now, we set
$$
\pi_N u_k = \sum_{j=1}^N \langle u_k,\phi_j\rangle \phi_j = \sum_{j=1}^N \int_M u_k(x)\phi_j(x)\, dx_g\ \phi_j .
$$
By usual Sobolev estimates and by the Weyl law, there exists $C>0$ such that $\Vert\phi_j\Vert_{L^\infty(M)}\leq C \lambda_j^{\frac{n}{2}}$ and $\lambda_j\sim j^{\frac{2}{n}}$ for every $j\in\N^*$, hence $\Vert\phi_j\Vert_{L^\infty(M)}\leq C j$. We infer that $\vert\langle u_k,\phi_j\rangle\vert\leq CN \int_M \vert u_k\vert\leq C2^\frac{n}{2}\pi^\frac{n}{4}\frac{N}{k^\frac{n}{4}}$ for every $j\leq N$.

Let $\varepsilon>0$ be arbitrary. Choosing $k$ large enough so that $C 2^\frac{n}{2}\pi^\frac{n}{4}\frac{N^2}{k^\frac{n}{4}}\leq \varepsilon$, we have $\Vert \pi_N u_k\Vert\leq\varepsilon$.
Setting $y^N = u_k - \pi_N u_k$, we have 
\begin{equation*}
\langle \Op(a) y^N, y^N\rangle
= \underbrace{\langle \Op(a) u_k, u_k\rangle}_{\simeq g_2^T(a)} + \underbrace{\langle \Op(a) \pi_N u_k, \pi_N u_k\rangle}_{\leq \varepsilon^2\max\bar a_T} 
- \underbrace{\langle \Op(a) \pi_N u_k, u_k\rangle}_{\vert\cdot\vert\leq \varepsilon\max a} - \underbrace{\langle \Op(a) u_k, \pi_N u_k\rangle}_{\vert\cdot\vert\leq \varepsilon\max a} 
\end{equation*}
and the conclusion follows.
\end{remark}

\begin{remark}\label{remark_GB}
In view of the next result, what is interesting to note that $e^{it\sqrt{\triangle}}u_k$ (or, accordingly, $e^{it\sqrt{\triangle}}(u_k-\pi_Nu_k)$) is a half-wave Gaussian beam along the geodesic $\varphi_t(x_0,\xi_0)$.

Indeed, for any symbol of order $0$, recalling that $A_t = e^{-it\sqrt{\triangle}}\Op(a)e^{it\sqrt{\triangle}}$ has $a_t = a\circ\varphi_t$ as principal symbol, we have $\langle\Op(a)e^{it\sqrt{\triangle}}u_k,e^{it\sqrt{\triangle}}u_k\rangle = \langle A_t u_k,u_k\rangle = \langle \Op(a_t) u_k,u_k\rangle+\mathrm{o}(1) = a_t(x_0,\xi_0)+\mathrm{o}(1)$ (by Lemma \ref{lem_coherent_state}), which means that $e^{it\sqrt{\triangle}}u_k$ is microlocally concentrated around $\varphi_t(x_0,\xi_0)$.
\end{remark}

\begin{lemma}
If the spectrum is \textit{uniformly locally finite}, then for every ray $\gamma$ (periodic or not), for every $T>0$, there exists $\nu\in\QL(M)$ such that $\nu(\gamma_{[0,T]})>0$.
Moreover, as a consequence, $M$ is Zoll.
\end{lemma}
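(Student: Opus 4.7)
The plan is to construct a Gaussian beam along $\gamma$ and analyse its spectral decomposition, using uniform local finiteness to kill cross terms via a Montgomery--Vaughan type inequality, and then to convert a non-Zoll assumption into a contradiction via Lemma~\ref{crucial_lemma_extended}.

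Concretely, I would fix $(x_0,\xi_0)\in S^*M$ with $\gamma(t)=\pi(\varphi_t(x_0,\xi_0))$ and work with the coherent states $u_k$ of Lemma~\ref{lem_coherent_state} based at $(x_0,\xi_0)$. By Remark~\ref{remark_GB} the half-wave $e^{it\sqrt{\triangle}}u_k$ is microlocally concentrated on $\varphi_t(x_0,\xi_0)$, so, choosing a closed neighborhood $\omega$ of $\gamma([0,T])$ and a smooth $h$ with $h\leq\chi_\omega$ and $h\equiv 1$ near $\gamma([0,T])$, dominated convergence plus the Gaussian-beam concentration gives
$$\frac{1}{T}\int_0^T\int_M h\,\bigl|e^{it\sqrt{\triangle}}u_k\bigr|^2\,dx_g\,dt\;\xrightarrow[k\to+\infty]{}\;\frac{1}{T}\int_0^T h(\gamma(t))\,dt=1.$$
Decomposing $u_k=\sum_\Lambda P_\Lambda u_k$ along distinct eigenvalues and setting $F_T(s)=\frac{1}{T}\int_0^T e^{its}\,dt$ (so $|F_T(s)|\leq\min(1,2/(T|s|))$), the same quantity equals $\sum_\Lambda\int_M h\,|P_\Lambda u_k|^2\,dx_g+R(T,k)$ with $R(T,k)=\sum_{\Lambda\neq\Lambda'}F_T(\Lambda-\Lambda')\int_M h\,P_\Lambda u_k\,\overline{P_{\Lambda'}u_k}\,dx_g$. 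The generalized Montgomery--Vaughan inequality recorded in the remark preceding this lemma (valid under uniform local finiteness) yields $|R(T,k)|\leq C/T$ uniformly in $k$. Taking $T$ large and then $k$ large, I obtain $\sum_\Lambda\int_\omega|P_\Lambda u_k|^2\,dx_g\geq 1/2$. Writing $p_\Lambda(k)=\|P_\Lambda u_k\|^2$ and $\psi_\Lambda(k)=P_\Lambda u_k/\|P_\Lambda u_k\|$, and using that the spectral content of $u_k$ escapes to infinity (so $\sum_{\Lambda\leq N}p_\Lambda(k)\to 0$ for each fixed $N$), a pigeonhole on the weighted average $\sum_\Lambda p_\Lambda(k)\int_\omega|\psi_\Lambda(k)|^2\geq 1/2$ extracts unit eigenfunctions $\psi_j\in E_{\Lambda_j}$ with $\Lambda_j\to+\infty$ and $\int_\omega|\psi_j|^2\,dx_g\geq 1/4$. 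Compactness of $\QL(M)$ together with the closed-set part of the Portmanteau theorem (Appendix~\ref{app:Portmanteau}) then yields $\nu_\omega\in\QL(M)$ with $\nu_\omega(\omega)\geq 1/4$; a diagonal extraction over closed neighborhoods $\omega_n\downarrow\gamma([0,T])$ produces $\nu\in\QL(M)$ with $\nu(\gamma([0,T]))\geq 1/4$. The small-$T$ case follows once $M$ is known to be Zoll, by lifting $\nu$ to an invariant measure on $S^*M$ and transporting mass across translates of $\tilde\gamma([0,T])$ along the geodesic flow.

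For the \emph{moreover} assertion I argue by contradiction: if $M$ is not Zoll, fix a non-periodic ray $\gamma$ and apply Lemma~\ref{crucial_lemma_extended} to obtain, for every $\varepsilon>0$, a closed set $\omega^\varepsilon$ with $\bigcup_{k}\gamma([2^k,2^k+k])\subset M\setminus\omega^\varepsilon$ and $g_2'(\omega^\varepsilon)\geq 1-\varepsilon$. By Lemma~\ref{leminegalites}, $g_1'(\omega^\varepsilon)\geq g_1''(\omega^\varepsilon)\geq g_2'(\omega^\varepsilon)\geq 1-\varepsilon$, so every QL assigns mass at least $1-\varepsilon$ to $\omega^\varepsilon$. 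On the other hand, applying the Gaussian-beam construction above to the shifted ray $t\mapsto\gamma(2^k+t)$ on $[0,k]$ (with $k$ so large that $C/k<1/4$, and shrinking the neighborhood of $\gamma([2^k,2^k+k])$ so that it lies in $M\setminus\omega^\varepsilon$) produces a QL with mass at least $1/4$ on $M\setminus\omega^\varepsilon$, i.e. mass at most $3/4$ on $\omega^\varepsilon$; picking $\varepsilon<1/4$ contradicts the previous bound, so $M$ must be Zoll.

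The hard part is the uniform-in-$k$ bound $|R(T,k)|=O(1/T)$ under mere uniform local finiteness rather than the spectral gap used in the norm-convergence lemma just above: because distinct eigenvalues can be arbitrarily close, one cannot use the classical Montgomery--Vaughan bound with $\pi/\delta$ but must appeal to the $\min(1/s,\beta)$ variant stated in the preceding remark, which is precisely what uniform local finiteness affords and which absorbs the contribution of clustered eigenvalues uniformly in the coherent-state index~$k$.
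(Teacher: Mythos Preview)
Your claim that the generalized Montgomery--Vaughan variant yields $|R(T,k)|\leq C/T$ uniformly in $k$ under uniform local finiteness is incorrect, and this breaks the whole argument. Write $|F_T(s)|\leq\min(1,2/(T|s|))$ and split the off-diagonal sum according to whether $|\Lambda-\Lambda'|>\ell$ or $0<|\Lambda-\Lambda'|\leq\ell$. The far pairs do contribute $O(1/T)$ via a Hilbert-type inequality, but the near pairs --- of which there can be up to $m-1$ per row, with differences arbitrarily close to $0$ --- only satisfy $|F_T(\Lambda-\Lambda')|\leq 1$, giving a contribution of order $(m-1)\sum_\Lambda|a_\Lambda|^2=m-1$, independent of $T$. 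The $\min(1/s,\beta)$ variant in the remark does not help: to dominate $|F_T|$ you must take $\beta$ of order $T$, and the constant $C$ in that inequality then grows with $\beta$, cancelling the $1/T$ prefactor. Since the diagonal part $D(k)=\sum_\Lambda\int_M h\,|P_\Lambda u_k|^2$ is at most $1$ while the near off-diagonal part can be of size $m-1$, you cannot extract the lower bound $D(k)\geq 1/2$ that your pigeonhole step needs. Your Zoll argument then inherits the same gap, since it invokes the first part on $[0,k]$ with the bound $C/k<1/4$.

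The paper avoids this obstruction by two devices you are missing. First, it replaces the sinc kernel $F_T$ by a test function $f$ of \emph{compact} support on the spectral side; then $f(\lambda-\mu)\neq 0$ forces $|\lambda-\mu|$ bounded, so uniform local finiteness makes each row of the double sum genuinely finite (at most $\sim m$ terms), giving $\sum_{\lambda,\mu}|f(\lambda-\mu)||a_\lambda||a_\mu|\leq C$ with $C$ independent of the beam. Second, and crucially, the paper argues by contradiction: assuming $\nu(\gamma([0,T]))=0$ for every $\nu\in\QL(M)$, one obtains $\int_{\omega_\varepsilon}\phi_\lambda\phi_\mu\to 0$ as $\varepsilon\to 0$ and $\lambda,\mu\to\infty$, and this smallness of the \emph{individual matrix entries}, combined with the bounded row structure, forces the whole quadratic form $E(N,\varepsilon)$ to vanish --- without any $1/T$ decay. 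The contradiction then comes from the Gaussian-beam concentration on $[0,T]$ against the positivity of $\hat f$ near $0$. For the Zoll conclusion the paper simply notes that a non-periodic geodesic lifted to $S^*M$ would carry, by invariance, infinite mass for a finite invariant measure; your detour through Lemma~\ref{crucial_lemma_extended} is unnecessary.
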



\begin{proof}
By contradiction, let $\gamma$ be a ray and let $T>0$ such that $\nu(\gamma([0,T]))=0$ for every $\nu\in\QL(M)$. 
The proof goes in several steps.

\medskip

For every $\varepsilon>0$, we set $\omega_\varepsilon = \{x\in M\ \mid\ d(x,\gamma([0,T])<\varepsilon\}$ (open $\varepsilon$-neighborhood of $\gamma([0,T])$). We have $\gamma([0,T]) = \cap_{\varepsilon>0} (\omega_\varepsilon) = \cap_{\varepsilon>0} (\overline{\omega}_\varepsilon)$.
Note that $\nu(\omega_\varepsilon) \rightarrow \nu(\gamma([0,T]))=0$ as $\varepsilon\rightarrow 0$, for any $\nu\in\QL(M)$.

\medskip


Given any eigenvalue $\lambda\in\mathrm{Spec}(\sqrt{\triangle})$ and any associated eigenfunction $\phi_\lambda$, we set $\nu_\lambda=\phi_\lambda^2\, dx_g$.
For $\varepsilon>0$ fixed, we have $\limsup_{\lambda\rightarrow+\infty}\nu_\lambda(\omega_\varepsilon) \leq \sup_{\nu\in\QL(M)} \nu(\omega_\varepsilon)$ (the $\sup$ is a $\max$ since $\QL(S^*M)$ is compact) which converges to $0$ as $\varepsilon\rightarrow 0$. It follows that the double limit below exists and
$$
\lim_{\stackrel{\varepsilon\rightarrow 0}{\lambda\rightarrow+\infty}}\int_{\omega_\varepsilon}  \phi_\lambda^2\, dx_g
= \lim_{\varepsilon\rightarrow 0}\lim_{\lambda\rightarrow+\infty}\int_{\omega_\varepsilon}  \phi_\lambda^2\, dx_g
= \lim_{\lambda\rightarrow+\infty}\lim_{\varepsilon\rightarrow 0} \int_{\omega_\varepsilon}  \phi_\lambda^2\, dx_g = 0 .
$$
In passing, note that, for any $\lambda$ fixed, we have of course $\lim_{\varepsilon\rightarrow 0}\int_{\omega_\varepsilon} \phi_\lambda^2\, dx_g = 0$.

Now, since $\left| \int_{\omega_\varepsilon} \phi_\lambda \phi_\mu \, dx_g \right| \leq \frac{1}{2} \left( \int_{\omega_\varepsilon} \phi_\lambda^2 \, dx_g + \int_{\omega_\varepsilon} \phi_\mu^2 \, dx_g \right) $,
we infer that
\begin{equation}\label{limepsij}
\lim_{\stackrel{\varepsilon\rightarrow 0}{\lambda,\mu\rightarrow+\infty}}\int_{\omega_\varepsilon} \phi_\lambda\phi_\mu\, dx_g = 0.
\end{equation}
In passing, note that, in the above limit, we can even keep one of the indices $\lambda$ or $\mu$ fixed.

\medskip


We set $E(N,\varepsilon) = \int_\R \int_{\omega_\varepsilon}  \hat{f}(t) |e^{it\sqrt{\triangle}}y^N|^2 \,dx_g\, dt$ for some $y^N\in L^2(M)$ of norm $1$, and $f$ is an arbitrary smooth function of compact support, such that $f(0)=1$ (and thus, $\int_\R\hat{f}(t)\, dt=1$, up to constant), such that $\hat{f}\geq 0$, and such that $\hat{f}(0)=\int_\R f(t)\, dt >0$.
Note that $E(N,\varepsilon) = 1-E^c(N,\varepsilon) $ with $E^c(N,\varepsilon) = \int_\R \int_{M\setminus\omega_\varepsilon}  \hat{f}(t) |(e^{it\sqrt{\triangle}}y^N)(t,x)|^2 \,dx_g\, dt$.

We choose the high-frequency coherent state $y^N$ constructed in Remark \ref{rem_GB_highfrequency}, so that (by Remark \ref{remark_GB}) $e^{it\sqrt{\triangle}}y^N$ is a half-wave kind of Gaussian beam along $\gamma(t)$, microlocally concentrated at $\varphi_t(x_0,\xi_0)$, where $(x_0,\xi_0)\in S^*M$ is the initial condition corresponding to the ray $\gamma$.

Expanding in series, we have $y^N = \sum_{\lambda\geq N} P_\lambda y^N$, and we set $P_\lambda y^N=a_\lambda \phi_\lambda$ for some eigenfunction $\phi_\lambda$ of norm $1$ associated with $\lambda$, with $\sum\vert a_\lambda\vert^2=1$. Then, since $e^{it\sqrt{\triangle}}y^N=\sum_{\lambda\geq N} a_\lambda e^{it\lambda}\phi_\lambda$, we get
$$
E(N,\varepsilon) = \sum_{\stackrel{\lambda,\mu\in\mathrm{Spec}(\sqrt{\triangle})}{\lambda,\mu\geq N}} f(\lambda-\mu) a_\lambda \bar a_\mu \int_{\omega_\varepsilon} \phi_\lambda\phi_\mu \, dx_g .
$$
Note that, in the sum, by definition of $P_\lambda$, each diagonal term ($\lambda=\mu$) is single.
At this step, in order to use \eqref{limepsij}, we need to assume that
$$
\sup_{\underset{\lambda\in\mathrm{Spec}(\sqrt{\triangle})}{\sum}\vert a_\lambda\vert^2=1} \ \sum_{\stackrel{\lambda,\mu\in\mathrm{Spec}(\sqrt{\triangle})}{\lambda,\mu\geq N}} \vert f(\lambda-\mu)\vert \vert a_\lambda\vert \vert a_\mu\vert < +\infty.
$$
In particular, this is true under the spectral assumption of uniform local finiteness of the spectrum. Indeed, since $f$ has a compact support, there exists $m\in\N^*$ such that the sum is bounded above by
$$
\sum_j\left( \vert a_j\vert \vert a_{j-m}\vert +\cdots+ \vert a_j\vert^2+\cdots+ \vert a_j\vert \vert a_{j+m}\vert \right) \leq (2m+1)\sum_j\vert a_j\vert^2=(2m+1).
$$
Then, using \eqref{limepsij} we infer that
\begin{equation}\label{limE1}
\lim_{\stackrel{\varepsilon\rightarrow 0}{N\rightarrow+\infty}} E^c(N,\varepsilon) = \lim_{\varepsilon\rightarrow 0}\lim_{N\rightarrow+\infty} E^c(N,\varepsilon) = \lim_{N\rightarrow+\infty}\lim_{\varepsilon\rightarrow 0} E^c(N,\varepsilon) = 1.
\end{equation}
%
%
Now, let $T>0$ be arbitrary. 
Since we have taken $f$ such that $\hat{f}\geq 0$, $\hat{f}(0)>0$ and $\int_\R \hat{f}(t) \, dt=1$, we have $\int_{\R\setminus[0,T]}\hat{f}(t)\, dt <1$.
Therefore, writing
$$
E^c(N,\varepsilon) = \int_{\R\setminus[0,T]} \int_{M\setminus\omega_\varepsilon}  \hat{f}(t) |e^{it\sqrt{\triangle}}y^N|^2 \,dx_g\, dt + \int_{[0,T]} \int_{M\setminus\omega_\varepsilon}  \hat{f}(t) |e^{it\sqrt{\triangle}}y^N|^2 \,dx_g\, dt ,
$$
the first integral is (strictly) less than $1$, and the second integral converges to $0$ as $N\rightarrow +\infty$ and $\varepsilon>0$ is chosen small enough, because $e^{it\sqrt{\triangle}}y^N$ concentrates along $\gamma([0,T])$ that does not meet $M\setminus\omega_\varepsilon$ by construction.
We have obtained a contradiction with \eqref{limE1}.

An example of function $f$ that is adequate is a triangle function, but there are many possibilities. Also, note that ensuring nonnegativity is never a problem, by taking the convolution by itself of a given function.

The last part is now an obvious consequence. Indeed, if there were to exist a non-periodic geodesic, then there exists a QL having positive mass along $\gamma$. By invariance, this mass must be equal to $+\infty$, which is absurd. Therefore all geodesics are periodic, and hence, as already mentioned (Wadsley's Theorem, see \cite{Besse}), $M$ is Zoll.
\end{proof}

\begin{lemma}
If $M$ has the spectral gap property, then $M$ is Zoll and the Dirac along any periodic ray is a QL.
\end{lemma}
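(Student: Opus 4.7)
The plan is to derive both assertions simultaneously by verifying the criterion of Lemma \ref{lem12Zoll}, namely: there exists $T>0$ (which will be the smallest common period) such that $g_1(\omega)\leq g_2^T(\omega)$ for every closed subset $\omega\subset M$.

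To begin, the spectral gap property forces $0$ to be isolated in $\Sigma$: any non-zero difference of eigenvalues has modulus at least $c$, so its closure points are bounded away from $0$. In particular $\Sigma\neq\R$, and Theorem \ref{thm_Guillemin} yields that $M$ is Zoll with smallest common period $T_0>0$. Since $M$ is Zoll, Lemma \ref{lemg2T} gives $g_2(\omega)=g_2^{T_0}(\omega)$ for every closed $\omega$, so it suffices to establish the inequality $g_1(\omega)\leq g_2(\omega)$ for every closed $\omega\subset M$.

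To this end, I would combine two ingredients. On one hand, the preceding operator norm convergence lemma (which uses the Montgomery-Vaughan inequality and is precisely what the spectral gap is designed for) gives $\bar A_T(\omega)\rightarrow \bar A_\infty(\omega)$ in $\mathcal L(L^2(M))$ as $T\to+\infty$, hence
\[
\lim_{T\to+\infty}\inf_{\|y\|=1}\langle \bar A_T(\omega)y,y\rangle =\inf_{\|y\|=1}\langle \bar A_\infty(\omega)y,y\rangle =\inf_{\phi\in\mathcal E}\int_\omega \phi^2\,dx_g =g_1(\omega),
\]
the last equality coming from the block-diagonal structure $\bar A_\infty(\omega)=\sum_\lambda P_\lambda\chi_\omega P_\lambda$, which makes the infimum attained on normalized eigenfunctions. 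On the other hand, I would prove the complementary coherent-state upper bound
\[
\inf_{\|y\|=1}\langle \bar A_T(\omega)y,y\rangle \leq g_2^T(\omega)
\]
by approximating $\chi_\omega$ from above. Using Remark \ref{rem_openclosed} and a smoothing step, one may take smooth $h_k$ with $\chi_\omega\leq h_k$ decreasing pointwise to $\chi_\omega$. Since $h_k-\chi_\omega\geq 0$ as a multiplication operator and the averaging map $B\mapsto\frac{1}{T}\int_0^T e^{-it\sqrt\triangle} B e^{it\sqrt\triangle}\,dt$ preserves positivity, we get $\bar A_T(\omega)\leq \bar A_T(h_k)$ as self-adjoint operators. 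For each smooth $h_k$, $\bar A_T(h_k)$ is pseudo-differential of order $0$ and by the Egorov theorem its principal symbol $\frac{1}{T}\int_0^T h_k\circ\pi\circ\varphi_t\,dt$ is continuous on the compact set $S^*M$ and attains its minimum, equal to $g_2^T(h_k)$, at some $(x_0,\xi_0)$. Applying Lemma \ref{lem_coherent_state} in a local chart around $x_0$ (in the high-frequency version from Remark \ref{rem_GB_highfrequency}, so that the test vectors lie in $L^2(M)$ and microlocally concentrate at $(x_0,\xi_0)$) produces unit vectors $u_l$ with $\langle \bar A_T(h_k)u_l,u_l\rangle\to g_2^T(h_k)$, whence $\inf_{\|y\|=1}\langle \bar A_T(h_k)y,y\rangle \leq g_2^T(h_k)$. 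Lemma \ref{lemBorel} then gives $g_2^T(h_k)\downarrow g_2^T(\omega)$ as $k\to\infty$, yielding the desired bound.

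Combining the two ingredients, $g_1(\omega)\leq \liminf_{T\to+\infty} g_2^T(\omega)=g_2(\omega)=g_2^{T_0}(\omega)$ for every closed $\omega\subset M$, and Lemma \ref{lem12Zoll} yields both that $M$ is Zoll and that $\delta_\gamma\in\QL(M)$ for every periodic ray $\gamma\in\Gamma$. The main technical obstacle is the coherent-state upper bound: the principal symbol $\chi_\omega$ is discontinuous, forcing the monotone smoothing step described above, and transferring Lemma \ref{lem_coherent_state} from $\R^n$ to a local chart on $M$ requires the high-frequency truncation of Remark \ref{rem_GB_highfrequency} so that the resulting coherent states lie in the frequency range where the spectral-gap operator-norm convergence is effective. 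Once this microlocal step is in place, all remaining pieces assemble directly from Lemmas \ref{lem12Zoll}, \ref{lemg2T}, \ref{lemBorel} and the preceding operator-norm lemma.
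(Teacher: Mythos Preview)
Your argument is correct and takes a genuinely different route from the paper's. The paper proves Zoll via Theorem~\ref{thm_Guillemin} (as you do), but for the QL assertion it proceeds \emph{directly}: given a periodic ray $\gamma$, it builds a high-frequency coherent state $y^N$ whose half-wave propagation is a Gaussian beam along $\gamma$, then uses the spectral gap to collapse $\langle A_f y^N,y^N\rangle$ to a diagonal sum $\sum_\lambda |a_\lambda^N|^2\langle A\phi_\lambda,\phi_\lambda\rangle$; choosing $A$ supported away from $\gamma$ and using a Ces\`aro-type extraction yields a subsequence of eigenfunctions concentrating on $\gamma$. Your approach is instead \emph{indirect through the paper's own characterization}: you feed the operator-norm convergence $\bar A_T(\omega)\to\bar A_\infty(\omega)$ (this is where the gap enters) together with a coherent-state upper bound $\inf_{\|y\|=1}\langle\bar A_T(\omega)y,y\rangle\leq g_2^T(\omega)$ into the criterion of Lemma~\ref{lem12Zoll}. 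What you gain is economy: once the operator-norm lemma and Lemma~\ref{lem12Zoll} are in place, no ad hoc subsequence extraction is needed, and the argument fits neatly into the $g_1/g_2$ framework the paper develops. What the paper's approach buys is constructiveness---it exhibits the concentrating eigenfunctions explicitly---and independence from Lemma~\ref{lem12Zoll}. One minor remark: the high-frequency truncation of Remark~\ref{rem_GB_highfrequency} is not actually needed for your upper bound $\inf_{\|y\|=1}\langle\bar A_T(h_k)y,y\rangle\leq g_2^T(h_k)$, since any sequence of unit vectors with the right microlocal limit will do; the truncation matters in the paper's proof for a different reason.
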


\begin{proof}
To prove that the spectral gap property implies that $M$ is Zoll, one can apply the theory done  in \cite{Guillemin_Duke,Helton}: under the spectral gap property, $\Sigma\neq\R$, and hence $M$ is Zoll. Or, one can say that spectral gap implies uniformly locally finite spectrum, and then $M$ is Zoll by the previous item. 

The fact that the Dirac $\delta_\gamma$ along any periodic ray $\gamma$ is a QL is proved in \cite{Macia_CPDE2008}. We provide however a proof of that fact hereafter.

Let $\gamma$ be a periodic ray. As in the previous proof, let $y^N\in L^2(M)$ of norm $1$ such that $e^{it\sqrt{\triangle}} y^N\rightarrow \gamma(t)$ uniformly along $[0,T]$, as $N\rightarrow +\infty$.
Using the already defined operator $A_f = \int_\R \hat{f}(t) e^{-it\sqrt{\triangle}} A e^{it\sqrt{\triangle}} \,dt$ with $A\in\Psi^0$ of principal symbol $a$, we have
$$
\langle A_f y^N, y^N\rangle = \langle \Op(a_f) y^N, y^N\rangle = \int_\R \hat{f}(t) \left\langle \Op(a) e^{it\sqrt{\triangle}} y^N, e^{it\sqrt{\triangle}} y^N\right\rangle\, dt.
$$
Since $\sigma_P(A_f)=\int_\R \hat f(t)\, a\circ\varphi_t\, dt$, we have
$$
\lim_{N\rightarrow +\infty} \langle A_f y^N, y^N\rangle = \int_\R \hat{f}(t) \, a\circ\varphi_t(z)\, dt 
$$
where $z\in S^*M$ is the initial condition associated with $\gamma$.

Besides, since $A_f = \sum_{\lambda,\mu\in\mathrm{Spec}(\sqrt{\triangle})} f(\lambda-\mu) P_\lambda A P_\mu$, we have also
$$
\langle A_f y^N, y^N\rangle = \sum_{\stackrel{\lambda,\mu\in\mathrm{Spec}(\sqrt{\triangle})}{\lambda,\mu\geq N}} f(\lambda-\mu) a_\lambda^N \bar a_\mu^N \langle A\phi_\lambda,\phi_\mu\rangle
$$
where we have assumed that $y^N = \sum_{\stackrel{\lambda\in\mathrm{Spec}(\sqrt{\triangle})}{\lambda\geq N }}  a_\lambda^N \phi_\lambda$. 

Since $M$ has the spectral gap property, if we choose $f$ of compact support centered at $0$, such that $f(0)=1$, and the size of support is smaller than the gap, then the above sum reduces to
$$
\langle A_f y^N, y^N\rangle = \sum_{\stackrel{\lambda\in\mathrm{Spec}(\sqrt{\triangle})}{\lambda\geq N }}  \vert a_\lambda^N\vert^2 \langle A\phi_\lambda,\phi_\lambda\rangle.
$$
Now, in the above formulas, we take $a=a^\varepsilon$, with $a^\varepsilon$ supported outside of a tubular neighborhood of $\gamma$, and essentially equal to $1$. We set $A^\varepsilon=\Op(a^\varepsilon)$, and accordingly, we consider the operator $A_f^\varepsilon$. Then the above sum tends to zero as $N\rightarrow +\infty$. Since this is a (Cesaro-like) convex combination of nonnegative terms, reasoning by contradiction it follows that there exists a subsequence such that $\langle A^\varepsilon\phi_{j_k},\phi_{j_k}\rangle\rightarrow 0$ as $k\rightarrow +\infty$, for every $\varepsilon$. This gives the result.

To be more precise: what we have proved is that for every open subset $\omega_\varepsilon\subset M\setminus\gamma(\R)$, there exists a subsequence such that $\int_{\omega_\varepsilon}\phi_{j_k}^2\, dx_g\rightarrow 0$. Extracting a subsequence if necessary, this means that there exists a QL $\nu_\varepsilon$ such that $\nu_\varepsilon(\omega_\varepsilon)=0$. In other words, we have found a decreasing sequence of closed sets $\gamma([0,T])\subset F_k$ (converging to $\gamma([0,T])$) such that $\nu_k(F_k)=1$. Now, let $\nu$ be a weak limit of $\nu_k$. We have $\nu(\gamma([0,T])) = \lim_{k\rightarrow+\infty} \nu(F_k)$, but $\nu(F_k)\geq \limsup_{j\rightarrow +\infty} \nu_j(F_k)$, and for $j>k$ we have $F_j\subset F_k$ and thus $\nu_j(F_k)\geq \nu_j(F_j)=1$, whence finally $\nu(F_k)=1$ and thus $\nu(\gamma([0,T]))=1$. We conclude that $\nu$ is the Dirac along $\gamma$.
\end{proof}

\appendix

\section{Appendix}
\subsection{Portmanteau theorem}\label{app:Portmanteau}
Let us recall the Portmanteau theorem (see, e.g., \cite{Billingsley}).
Let $X$ be a topological space, endowed with its Borel $\sigma$-algebra. 
Let $\mu$ and $\mu_n$, $n\in\N^*$, be finite Borel measures on $X$. Then the following items are equivalent:
\begin{itemize}
\item $\mu_n\rightarrow\mu$ for the narrow topology, i.e., $\int f\, d\mu_n\rightarrow\int f\, d\mu$ for every bounded continuous function $f$ on $X$;
\item $\int f\, d\mu_n\rightarrow\int f\, d\mu$ for every Borel bounded function $f$ on $X$ such that $\mu(\Delta_f)=0$, where $\Delta_f$ is the set of points at which $f$ is not continuous;
\item $\mu_n(B)\rightarrow\mu(B)$ for every Borel subset $B$ of $X$ such that $\mu(\partial B)=0$;
\item $\mu(F)\geq\limsup\mu_n(F)$ for every closed subset $F$ of $X$, and $\mu_n(X)\rightarrow\mu(X)$;
\item $\mu(O)\leq\liminf\mu_n(O)$ for every open subset $O$ of $X$, and $\mu_n(X)\rightarrow\mu(X)$.
\end{itemize}

\subsection{Some facts on invariant measures}\label{app:measures}
%
We recall that, given a periodic ray $\gamma$ on $M$, the Dirac measure $\delta_\gamma$ on $M$ is defined by $\delta_\gamma(f)=\frac{1}{T}\int_0^Tf(\gamma(t))\, dt$, for every $f\in C^0(M)$, where $T$ is the period of $\gamma$.
If $\gamma=\pi\circ\tilde\gamma$ where $\tilde\gamma$ is the periodic geodesic in $S^*M$ projecting onto $\gamma$, then we consider as well the Dirac measure $\delta_{\tilde\gamma}$ on $S^*M$  defined by $\delta_{\tilde\gamma}(a)=\frac{1}{T}\int_0^Ta(\tilde\gamma(t))\, dt$, for every $a\in C^0(S^*M)$. Obviously, we have $\pi_*\delta_{\tilde\gamma}=\delta_\gamma$.

Before stating the next result, we recall a useful fact. Let $\Phi:X\rightarrow Y$ be a measurable mapping, with $X$ and $Y$ separable metric spaces. Let $\mu$ be a Radon measure on $X$ and let $\Phi_*\mu$ be its pushforward to $Y$ under $\Phi$. 
We recall that the support of $\mu$ is the closed subset $\supp(\mu)$ of $X$ defined as the set of all $x\in X$ such that $\mu(U)>0$ for any neighborhood $U$ of $x$. If $\Phi$ is continuous and proper then $\Phi(\supp(\mu))=\supp(\Phi_*\mu)$.

Hereafter, we establish a decomposition of invariant probability Radon measures with respect to any Dirac measure along a periodic ray.
Note that, by propagation, a finite invariant Radon measure can involve a Dirac part only if this part is a Dirac $\delta_{\tilde\gamma}$ along a geodesic and moreover the geodesic $\tilde\gamma$ has to be periodic (due to finiteness of the measure). Recall that $\I(S^*M)$ designates the set of invariant probability Radon measures on $S^*M$.

\begin{proposition} \label{propQLM}
Let $\mu \in \I(S^*M)$ and let $\gamma=\pi\circ\tilde\gamma\in\Gamma$ be a periodic ray. Then:
\begin{itemize}
\item $\mu=\delta_{\tilde\gamma}$ if and only if $\pi_*\mu = \delta_\gamma$. 
\item There exists a nonnegative Radon measure $\mu_1$ on $S^*M$ that is invariant under the geodesic flow and satisfies $\mu_1(\tilde\gamma(\R))=0$ and $(\pi_*\mu_1)(\gamma(\R)) =0$, such that $\mu=\mu_1 + a \delta_{\tilde\gamma}$, with $a=\mu(\tilde\gamma(\R))$.
 \end{itemize}
\end{proposition}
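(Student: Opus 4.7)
The first item is proved by checking both implications. The forward direction $\mu = \delta_{\tilde\gamma} \Longrightarrow \pi_*\mu = \delta_\gamma$ is immediate from the change-of-variables formula, since $\pi \circ \tilde\gamma = \gamma$: for every $f \in C^0(M)$,
$$
(\pi_*\delta_{\tilde\gamma})(f) = \delta_{\tilde\gamma}(f \circ \pi) = \frac{1}{T}\int_0^T f(\pi(\tilde\gamma(t)))\, dt = \frac{1}{T}\int_0^T f(\gamma(t))\, dt = \delta_\gamma(f).
$$
For the converse, assume $\pi_*\mu = \delta_\gamma$. Then $\mu\bigl(S^*M \setminus \pi^{-1}(\gamma(\R))\bigr) = (\pi_*\mu)\bigl(M \setminus \gamma(\R)\bigr) = 0$, so $\mathrm{supp}(\mu) \subset \pi^{-1}(\gamma(\R))$. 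Combined with flow-invariance of $\mu$, its support is a union of full geodesic-flow orbits contained in $\pi^{-1}(\gamma(\R))$, which, under the oriented-ray convention adopted in the paper, reduces to $\tilde\gamma(\R)$. The geodesic flow restricted to the compact invariant circle $\tilde\gamma(\R)$ is conjugate to a rotation, hence uniquely ergodic with invariant probability measure $\delta_{\tilde\gamma}$, forcing $\mu = \delta_{\tilde\gamma}$.

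For the second item, I would first note that $\tilde\gamma(\R)$ is a compact Borel flow-invariant subset of $S^*M$, so the restriction $\mu\vert_{\tilde\gamma(\R)}$ is a flow-invariant nonnegative Radon measure of total mass $a := \mu(\tilde\gamma(\R))$. When $a > 0$, the normalization $\frac{1}{a}\mu\vert_{\tilde\gamma(\R)}$ is an invariant probability measure supported on the uniquely ergodic circle $\tilde\gamma(\R)$; by the same argument as in item~1 it equals $\delta_{\tilde\gamma}$, so $\mu\vert_{\tilde\gamma(\R)} = a\,\delta_{\tilde\gamma}$. Consequently, setting $\mu_1 := \mu - a\,\delta_{\tilde\gamma}$ defines a nonnegative (by the preceding identification), flow-invariant Radon measure on $S^*M$, and $\mu_1(\tilde\gamma(\R)) = 0$ holds by construction.

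The remaining claim $(\pi_*\mu_1)(\gamma(\R)) = 0$, equivalently $\mu_1(\pi^{-1}(\gamma(\R))) = 0$, I would prove by contradiction. If $b := \mu_1(\pi^{-1}(\gamma(\R))) > 0$, then the invariance of both $\mu_1$ and of $\pi^{-1}(\gamma(\R))$ shows that $\frac{1}{b}\mu_1\vert_{\pi^{-1}(\gamma(\R))}$ is an invariant probability measure whose pushforward is a probability measure concentrated on $\gamma(\R)$. Since this pushforward is invariant and carried by the single periodic ray, it must equal $\delta_\gamma$, and item~1 then forces the restriction itself to equal $\delta_{\tilde\gamma}$, contradicting $\mu_1(\tilde\gamma(\R)) = 0$. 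Hence $b = 0$, completing the decomposition.

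The main obstacle is the orbit-uniqueness step in the converse of item~1: a priori $\pi^{-1}(\gamma(\R))$ may contain flow-invariant subsets other than $\tilde\gamma(\R)$ (most obviously the reverse-orientation orbit, and further orbits at self-intersections of $\gamma$), so identifying $\tilde\gamma(\R)$ as the only orbit contributing to an invariant measure with $\pi_*\mu = \delta_\gamma$ requires the oriented convention for lifts of periodic rays used in the paper, together with the observation that any invariant probability on a periodic orbit is unique and reproduces $\delta_{\tilde\gamma}$.
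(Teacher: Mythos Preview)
Your treatment of the first item and the construction of $\mu_1$ via unique ergodicity on the periodic orbit are correct and match the paper's approach. The genuine gap lies in your contradiction argument for $(\pi_*\mu_1)(\gamma(\R)) = 0$: you invoke ``the invariance of both $\mu_1$ and of $\pi^{-1}(\gamma(\R))$'', but $\pi^{-1}(\gamma(\R))$ is \emph{not} invariant under the geodesic flow. A point $(x,\xi)$ with $x \in \gamma(\R)$ and $\xi$ not tangent to $\gamma$ at $x$ is carried off $\pi^{-1}(\gamma(\R))$ immediately. Hence the restriction $\mu_1\vert_{\pi^{-1}(\gamma(\R))}$ need not be flow-invariant, and you cannot feed it back into item~1. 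Nor can this be salvaged by passing to the maximal invariant subset of $\pi^{-1}(\gamma(\R))$: by the very orbit argument you use for item~1, that subset is precisely $\tilde\gamma(\R)$, where $\mu_1$ already vanishes by construction, so no contradiction arises. The difficulty is genuine: an invariant measure can perfectly well charge a non-invariant set, so some further geometric input is required.

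The paper supplies this via a transversality argument. It first shows that the Hamiltonian geodesic field $X$ is transverse to the submanifold $\pi^{-1}(\gamma(\R))\setminus\tilde\gamma(\R)$ (by comparing $\partial_\xi H(x,\xi_1)$ with $\partial_\xi H(x,\xi_2)$ for $\xi_1\neq\xi_2$). It then deduces that any finite invariant measure assigns zero mass to $\pi^{-1}(\gamma(\R))\setminus\tilde\gamma(\R)$ by the standard cross-section argument: if some small relatively open $U$ in this set carried positive mass, transversality would make the slices $\varphi_s(U)$ pairwise disjoint for $s$ in a short interval, each with the same positive mass by invariance, forcing the total mass to be infinite. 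This lemma is the missing ingredient; once it is in hand, $(\pi_*\mu_1)(\gamma(\R))=\mu_1\bigl(\pi^{-1}(\gamma(\R))\setminus\tilde\gamma(\R)\bigr)=0$ follows at once.
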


\begin{remark} \label{corQLM}
It follows from the second point of the above proposition that $\pi_*\mu = \pi_*\mu_1 + a\delta_\gamma$.
\end{remark}

\begin{proof}[Proof of Proposition \ref{propQLM}.]
Let $\mu \in \I(S^*M)$, and let $\gamma=\pi\circ\tilde\gamma\in\Gamma$ be a periodic ray, of period $T$.

Let us prove the first point.
If $\mu=\delta_{\tilde\gamma}$, then $\pi_*\mu=\pi_*\delta_{\tilde\gamma}=\delta_\gamma$. 
Conversely, if $\pi_*\mu = \delta_\gamma=\pi_*\delta_{\tilde\gamma}$, then $\supp(\mu)\subset\pi^{-1}(\gamma(\R))$. Let us prove that we have exactly $\supp(\mu)=\tilde\gamma(\R)$. 
Since $\mu$ is invariant under the geodesic flow $\varphi_t$ (meaning that $\varphi_t^*\mu=\mu$ for any $t\in\R$), we must have $\varphi_t(\supp(\mu))=\supp(\mu)\subset\pi^{-1}(\gamma(\R))$ for any $t\in\R$.
Given any $z\in\supp(\mu)$, we must have $\pi(z)\in\pi(\supp(\mu))=\supp(\pi_*\mu)=\gamma(\R)$, and therefore there exists $s_0\in\R$ such that $\pi(z)=\gamma(s_0)$. Since $\varphi_t(z)\in\pi^{-1}(\gamma(\R))$ for any $t\in\R$, we must have $\pi(\varphi_t(z))=\gamma(s(t))$ for some $s(t)$, with $s(0)=s_0$. The function $t\mapsto s(t)$ must be strictly monotone around $s_0$, and since the ray $\gamma$ cannot have two distinct extremal lifts, it follows that $z\in\tilde\gamma$ and $\varphi_t(z)=\tilde\gamma(s_0+t)$. The first point is proved.

In order to prove the second point, we are going to use a general result of Riemannian geometry.

\begin{lemma}\label{lem_geomR}
We denote by $X$ the Hamiltonian geodesic field. Given any geodesic ray $\gamma=\pi\circ\tilde\gamma\in\Gamma$, $X$ is transverse to $\pi^{-1}(\gamma(\R))\setminus\tilde\gamma(\R)$.
\end{lemma}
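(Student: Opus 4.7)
I would establish the transversality pointwise. Fix $z_0=(x_0,\xi_0)\in\pi^{-1}(\gamma(\R))\setminus\tilde\gamma(\R)$ with $x_0=\gamma(t_0)$; the goal is $X(z_0)\notin T_{z_0}\pi^{-1}(\gamma(\R))$. Since $X$ is the Hamiltonian field of $H=|\xi|_{g^\star}$, Hamilton's equations give $\dot x=\sharp\xi$ on $S^*M$ (where $\sharp:T^*M\to TM$ is the musical isomorphism induced by $g$), so $d\pi_{z_0}(X(z_0))=\sharp\xi_0$ is a unit vector in $T_{x_0}M$.

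Next I would analyze the tangent cone of $\pi^{-1}(\gamma(\R))$ at $z_0$. In a neighborhood of $x_0$, the set $\gamma(\R)$ decomposes as a (finite or countable) union of smooth embedded arcs $\gamma_j$, one for each time $t_j$ with $\gamma(t_j)=x_0$; each preimage $\pi^{-1}(\gamma_j)$ is a smooth $n$-dimensional submanifold of $S^*M$ with tangent space $d\pi_{z_0}^{-1}(T_{x_0}\gamma_j)$ at $z_0$. Consequently, any vector tangent to $\pi^{-1}(\gamma(\R))$ at $z_0$ must project under $d\pi_{z_0}$ into the union of one-dimensional lines $\bigcup_j T_{x_0}\gamma_j$, each spanned by $\gamma'(t_j)$.

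To conclude, I would show $\sharp\xi_0\notin\bigcup_j T_{x_0}\gamma_j$. The vector $\sharp\xi_0$ lies in $T_{x_0}\gamma_j$ exactly when $\xi_0=\pm\flat\gamma'(t_j)$, i.e.\ when $z_0$ belongs to an orientation lift of $\gamma$ through $x_0$ at time $t_j$. The hypothesis $z_0\notin\tilde\gamma(\R)$ excludes the forward lift; the symmetric exclusion of the reversed-orientation lift (which is itself an orbit of $X$ contained in $\pi^{-1}(\gamma(\R))$, and is required implicitly in the application to Proposition~\ref{propQLM}) disposes of the remaining sign. Combined with the previous step this yields $X(z_0)\notin T_{z_0}\pi^{-1}(\gamma(\R))$. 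The main obstacle I foresee is the bookkeeping over both orientation lifts at each self-intersection parameter $t_j$ of $\gamma$; in the generic case where $\gamma$ is locally embedded near $x_0$, only one arc $\gamma_j$ arises and the argument reduces to the single assertion that $\sharp\xi_0$ is not collinear with $\gamma'(t_0)$.
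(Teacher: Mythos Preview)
Your approach is essentially the paper's: both project $X(z_0)$ through $d\pi$ to get $\sharp\xi_0=\partial_\xi H(x_0,\xi_0)$ and compare this horizontal component with the tangent direction of $\gamma$ at $x_0$. The paper works in local symplectic coordinates, writes the tangent space to $\pi^{-1}(\gamma(\R))$ at $(x,\xi)$ as the set of vectors with horizontal part proportional to $\partial_\xi H(x,\xi_1)$ (where $(x,\xi_1)\in\tilde\gamma$), and then invokes the injectivity of $\xi\mapsto\partial_\xi H(x,\xi)$ to conclude.

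You are more careful on two points the paper elides. First, you allow $\gamma$ to self-intersect at $x_0$, so that the tangent cone of $\gamma(\R)$ is a union of lines $T_{x_0}\gamma_j$ rather than a single line; the paper tacitly treats only the locally embedded case. Second, and more substantively, you correctly flag the reversed-orientation lift: at $z_0=(x_0,-\xi_1)$ one has $\sharp\xi_0=-\sharp\xi_1$, which \emph{is} collinear with $\gamma'(t_0)$, so $X$ is actually tangent there. The paper's assertion that $\partial_\xi H(x,\xi_1)\neq\partial_\xi H(x,\xi_2)$ for $\xi_1\neq\xi_2$ is true but insufficient---one needs non-collinearity, which fails when $\xi_2=-\xi_1$. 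Your remedy (exclude both orientation lifts $\tilde\gamma(\R)\cup\tilde\gamma^-(\R)$) is the right fix; it does not disturb the downstream applications in Lemma~\ref{lemraynul} and Proposition~\ref{propQLM}, since the reversed lift is itself a periodic orbit of $X$ and can be absorbed by the same argument applied symmetrically.
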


Note that $\pi^{-1}(\gamma(\R))$ is a fiber bundle with base the one-dimensional manifold $\gamma(\R)$.

\begin{proof}[Proof of Lemma \ref{lem_geomR}.]
Denoting by $g$ the Riemannian metric on $M$ and by $g^*$ the cometric, we have $H(x,\xi)=\frac{1}{2}g^*_x(\xi,\xi)$ in local symplectic coordinates $(x,\xi)\in S^*M$ and $X=(\partial_\xi H,-\partial_xH)$. We note that if $\xi_1\neq\xi_2$ then $\partial_\xi H(x,\xi_1)\neq \partial_\xi H(x,\xi_2)$. 
Taking a point $x=\pi(x,\xi_1)$ on the ray $\gamma$, the tangent space to $\pi^{-1}(\gamma(\R))$ at any point $(x,\xi)$ of the vertical fiber above $x$ does not depend on $\xi$ and is spanned by all $(\partial_\xi H(x,\xi_1),*)$. The previous remark shows that $X$ is tranverse to $\pi^{-1}(\gamma(\R))$ at $(x,\xi_2)$ (see Figure \ref{fig_geod}). 
\end{proof}

\begin{figure}[h]
\centerline{\scalebox{0.37}{\input{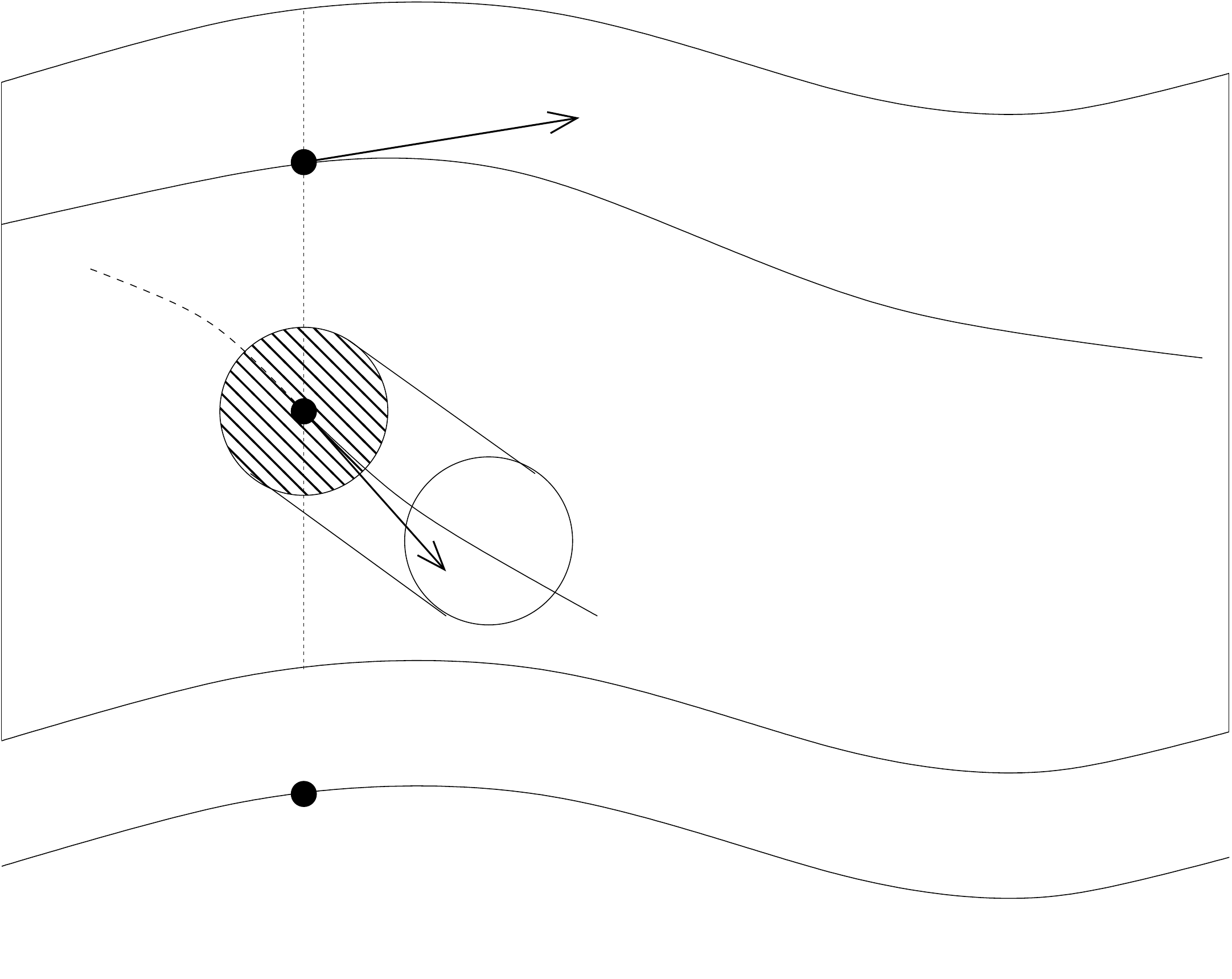_t}}}
\caption{Illustration of Lemmas \ref{lem_geomR} and \ref{lemraynul}.}
\label{fig_geod}
\end{figure}

From Lemma \ref{lem_geomR}, we deduce another general lemma.

\begin{lemma}\label{lemraynul}
Let $\mu$ be a nonnegative finite invariant Radon measure on $S^*M$. Given any geodesic ray $\gamma=\pi\circ\tilde\gamma\in\Gamma$, we have $\mu(\pi^{-1}(\gamma(\R))\setminus\tilde\gamma(\R))=0$.
\end{lemma}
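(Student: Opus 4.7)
The plan is to combine the transversality statement of Lemma \ref{lem_geomR} with a local flow-box construction and the invariance of $\mu$, together with a $\sigma$-compactness covering, to show that every point of $K:=\pi^{-1}(\gamma(\R))\setminus\tilde\gamma(\R)$ has a neighborhood inside $\pi^{-1}(\gamma(\R))$ of zero $\mu$-mass, and then to cover $K$ by countably many such pieces.

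First, I would check that $K$ is $\sigma$-compact. Since $M$ is closed, the projection $\pi:S^*M\to M$ is proper, so each $\pi^{-1}(\gamma([-n,n]))$ is compact. Setting
\[
K_{n,j}:=\bigl\{z\in\pi^{-1}(\gamma([-n,n]))\,:\,d(z,\tilde\gamma(\R))\geq 1/j\bigr\},
\]
each $K_{n,j}$ is a closed subset of a compact set, hence compact, and $K=\bigcup_{n,j\in\N^*}K_{n,j}$.

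Next, fix $z\in K$. By Lemma \ref{lem_geomR}, $X(z)$ is not tangent to $\pi^{-1}(\gamma(\R))$ at $z$. The standard flow-box theorem then furnishes an embedded transversal $\Sigma\subset\pi^{-1}(\gamma(\R))$ through $z$ and some $\delta>0$ such that
\[
\Phi:\Sigma\times(-\delta,\delta)\to S^*M,\qquad \Phi(z',t)=\varphi_t(z'),
\]
is a diffeomorphism onto an open neighborhood $V$ of $z$. Since $\mu$ is $\varphi_t$-invariant, the pullback $\Phi^*(\mu|_V)$ on $\Sigma\times(-\delta,\delta)$ is invariant under translations in the $t$-variable, so by uniqueness of translation-invariant Radon measures on $\R$ (applied fiberwise) it must take the product form $\nu\otimes dt$ for some locally finite Borel measure $\nu$ on $\Sigma$. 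Consequently $\mu(\Sigma)=(\nu\otimes dt)(\Sigma\times\{0\})=0$.

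Finally, for each $(n,j)$ I would cover the compact set $K_{n,j}$ by finitely many flow boxes $V$ of the kind above; in each such $V$, the set $V\cap\pi^{-1}(\gamma(\R))$ is a countable union of transversal local branches (any additional branches being due to possible self-intersections of the projected ray $\gamma$), each being a $\Sigma$ as above and hence $\mu$-null. Assembling these covers yields a countable covering of $K$ by $\mu$-null sets, so $\mu(K)=0$. The step I expect to be the most delicate is the disintegration $\Phi^*(\mu|_V)=\nu\otimes dt$: one must verify that invariance of a Radon measure on $\Sigma\times(-\delta,\delta)$ under translation in the second factor truly forces this product structure, which can be done by testing against tensor products $g(z')h(t)$ of continuous functions of compact support and invoking Haar-uniqueness on the interval to identify the $t$-marginal as Lebesgue up to a scale depending on $z'$.
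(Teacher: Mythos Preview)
Your overall strategy is correct and does reach the conclusion, but it is considerably heavier than the paper's argument, and one step as written is wrong.

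The claim that $\Phi(\Sigma\times(-\delta,\delta))$ is an \emph{open} neighborhood of $z$ in $S^*M$ fails as soon as $n=\dim M\geq 3$. Indeed $\pi^{-1}(\gamma(\R))$ has dimension $n$ (a one-dimensional base with $(n-1)$-dimensional fibers), so any $\Sigma\subset\pi^{-1}(\gamma(\R))$ has dimension at most $n$, whereas a flow-box transversal in the $(2n-1)$-dimensional manifold $S^*M$ must be a hypersurface of dimension $2n-2$. So the ``standard flow-box theorem'' does not furnish what you state. Fortunately nothing downstream actually needs openness: the identity $\Phi(z',t+s)=\varphi_s(\Phi(z',t))$ together with $\varphi_s$-invariance of $\mu$ already gives translation invariance of $\Phi^*\mu$ on $\Sigma\times(-\delta,\delta)$, hence the product form $\nu\otimes dt$ and $\mu(\Sigma)=0$. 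Alternatively, enlarge $\Sigma$ to a genuine hypersurface $\Sigma'\supset\Sigma$ transverse to $X$ (possible precisely because $X(z)\notin T_z\pi^{-1}(\gamma(\R))$) and run the standard flow-box on $\Sigma'$. For the final covering it suffices that each $\Sigma$ is relatively open in $\pi^{-1}(\gamma(\R))$, so each compact $K_{n,j}$ is covered by finitely many such pieces. Contrary to your closing remark, the disintegration is routine; the dimension issue above is the actual delicate point.

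By contrast, the paper dispatches the lemma in two lines with no disintegration and no $\sigma$-compactness, arguing by contradiction. If $\mu(K)>0$, pick a relatively open $U\subset K$ (inside $\pi^{-1}(\gamma(\R))$) with $\mu(U)>0$; by the transversality of Lemma~\ref{lem_geomR}, for $U$ and $t>0$ small the slices $\varphi_s(U)$, $s\in[0,t]$, are pairwise disjoint, and invariance gives $\mu(\varphi_s(U))=\mu(U)>0$ for every $s$, so countably many disjoint slices already force $\mu$ to be infinite. Your route has the virtue of being direct and of making the ``transversal slices are null'' mechanism explicit; the paper's route uses nothing beyond invariance and disjointness.
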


\begin{proof}[Proof of Lemma \ref{lemraynul}.]
We argue by contradiction. If $\mu(\pi^{-1}(\gamma(\R))\setminus\tilde\gamma(\R))>0$, then there exist $z\in\pi^{-1}(\gamma(\R))\setminus\tilde\gamma(\R)$ and a neighborhood $U$ of $z$ in the manifold $\pi^{-1}(\gamma(\R))$ such that $\mu(U)>0$. Let us propagate $U$ under the geodesic flow $\varphi_t$. By Lemma \ref{lem_geomR}, the Hamiltonian geodesic field $X$ is transverse to $\pi^{-1}(\gamma(\R))$. Hence if $U$ and $t>0$ are sufficiently small then $\varphi_t(U) \cap \pi^{-1}(\gamma(\R)) = \emptyset$, and actually the union of all $\varphi_s(U)$ with $0\leq s\leq t$ is a cylinder (denoted by $\mathcal{C}$) with distinct layers $\varphi_s(U)$ (see Figure \ref{fig_geod}). 

Now, since $\mu$ is invariant under the geodesic flow, we have $\mu(\varphi_s(U))=\mu(U)>0$. It follows that $\mu(\mathcal{C})=+\infty$, which contradicts the fact that $\mu$ is finite.
\end{proof}

We are now in a position to prove the second point. We set $a=\mu(\tilde\gamma(\R))$ and $\mu_1=\mu-a\delta_{\tilde\gamma}$. Given any measurable subset $B$ of $S^*M$, by definition of $\mu_1$ we have $\mu_1(B)=\mu_1(B\setminus\tilde\gamma(\R))+\mu_1(\tilde\gamma(\R))=\mu(B\setminus\tilde\gamma(\R))$, from which we infer that $\mu_1$ is a nonnegative measure that is invariant under the geodesic flow.
Note that $\mu_1(\tilde\gamma(\R))=0$.

Let us prove that $(\pi_*\mu_1)(\gamma(\R))=0$. This follows from Lemma \ref{lemraynul}, by writing that $\pi_*\mu_1=\pi_*\mu-a\delta_\gamma$ (because $\pi_*\delta_{\tilde\gamma} = \delta_\gamma$) and
$$
(\pi_*\mu_1)(\gamma(\R)) = (\pi_*\mu)(\gamma(\R)) - a
= \mu(\pi^{-1}(\gamma(\R))) - \mu(\tilde\gamma(\R))
= \mu(\pi^{-1}(\gamma(\R))\setminus\tilde\gamma(\R))=0.
$$
The proposition is proved.
\end{proof}

\end{document}